\documentclass[11pt]{amsart}
\usepackage{amsmath}
\usepackage{amssymb}
\usepackage{amscd}
\usepackage{color}

\def\NZQ{\mathbb}               
\def\NN{{\NZQ N}}
\def\QQ{{\NZQ Q}}
\def\ZZ{{\NZQ Z}}
\def\RR{{\NZQ R}}

\newtheorem{Theorem}{Theorem}[section]
\newtheorem{Lemma}[Theorem]{Lemma}
\newtheorem{Corollary}[Theorem]{Corollary}
\newtheorem{Proposition}[Theorem]{Proposition}
\newtheorem{Remark}[Theorem]{Remark}

\newtheorem{Example}[Theorem]{Example}

\newtheorem{Definition}[Theorem]{Definition}

\let\epsilon\varepsilon
\let\phi=\varphi
\let\kappa=\varkappa

\begin{document}

\title{Multiplicities of graded families of ideals on Noetherian local rings }
\author{Steven Dale Cutkosky}

\thanks{Partially supported by NSF grant DMS-2348849.}

\address{Steven Dale Cutkosky, Department of Mathematics,
University of Missouri, Columbia, MO 65211, USA}
\email{cutkoskys@missouri.edu}

\begin{abstract}
Let $R$ be a $d$-dimensional Noetherian local ring with maximal ideal $m_R$.
In this article, we give a generalization of the multiplicity $e(I)$ of an $m_R$-primary ideal $I$ of $R$ to a multiplicity $e(\mathcal I)$ of a graded family of $m_R$-primary ideals $\mathcal I$ in $R$. This multiplicity gives the classical multiplicity $e(I)$ if $\mathcal I=\{I^n\}$ is the $I$-adic filtration, and agrees with the volume, $\displaystyle \lim_{n\rightarrow \infty}d!\frac{\ell(R/I_n) }{n^d}$ for $R$ such  that the volume always exists as a limit. We will show in this paper that many of the classical theorems for the multiplicity of an ideal generalize to this multiplicity, including mixed multiplicities, the Rees theorem and the Minkowski inequality and equality. We give simple proofs which are independent of the theory of volumes and Okounkov bodies for all of our results, with the one exception being  the proof of the Minkowski equality. We do this by interpreting the multiplicity of graded families of $m_R$-primary ideals as a limit of  intersection products on the family of $R$-schemes which are obtained by blowing up $m_R$-primary ideals in $R$.

\end{abstract}

\keywords{graded family of ideals, multiplicity, mixed multiplicity}
\subjclass[2010]{13H15, 13A18, 14C17}

\maketitle

\section{Introduction}

Throughout this article, $R$ will be a Noetherian local ring of dimension $d$ with maximal ideal $m_R$. The length of an $R$-module $M$ will be denoted by $\ell(M)$ or $\ell_R(M)$. If $I$ is an $m_R$-primary ideal of $R$, then the multiplicity of $R$ with respect to $I$ is 
$$
e(I)=\lim_{n\rightarrow\infty}d!\frac{\ell(R/I^n)}{n^d}.
$$
A graded $m_R$-family $\mathcal I=\{I_n\}_{n\in\ZZ_{\ge 0}}$ on  $R$ is a family of ideals such that 
$I_0=R$ and  $I_m$ is $m_R$-primary for $m>0$ such that $I_mI_n\subset I_{m+n}$ for all $m,n\ge 0$. The Rees algebra 
$$
R[\mathcal I]=\oplus_{n\ge 0}I_n
$$
 is then naturally a graded $R$-algebra, which is generally not finitely generated. $\mathcal I$ is a  graded $m_R$-filtration if we also have the condition that 
$I_{n+1}\subset I_n$ for all $n$. 

 The volume 
$\mbox{vol}(\mathcal I)$ of a graded $m_R$-family $\mathcal I$ is defined as
\begin{equation}\label{eq34}
\mbox{vol}(\mathcal I)=\limsup_{n\rightarrow\infty} d!\frac{\ell(R/I_n)}{n^d}.
\end{equation}
This limsup is always a nonnegative real number (since there exists a positive integer $s$ such that $m_R^{sn}\subset I_n$ for all $n$).

The problem of existence of such limsups (\ref{eq34}) as limits  has been considered by Ein, Lazarsfeld and Smith \cite{ELS} and Musta\c{t}\u{a} \cite{Mus}.
When the ring $R$ is a domain and is essentially of finite type over an algebraically closed field $k$ with $R/m_R=k$, Lazarsfeld and Musta\c{t}\u{a} \cite{LM} showed that
the limit exists for all graded $m_R$-families.  We have the following general theorem.
\begin{Theorem}(Theorem 1.1 \cite{C2})\label{TheoremVolEx}
Suppose that $N(\hat R)$ is the nilradical of the $m_R$-adic completion $\hat R$ of $R$. Then the limit
\begin{equation}\label{eq10}
\lim_{n\rightarrow\infty} d!\frac{\ell_R(R/I_n)}{n^d}
\end{equation}
exists for any graded $m_R$-family $\mathcal I=\{I_n\}$ on $R$ if and only if $\dim N(\hat R) < d$.
\end{Theorem}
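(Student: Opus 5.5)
The plan has two parts: a reduction to a complete local domain followed by the Okounkov body argument, and a counterexample construction for the converse.

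\emph{Sufficiency.} Assume $\dim N(\hat R)<d$. Lengths of $R/I_n$ are preserved by passing to $\hat R$ with the family $\{I_n\hat R\}$, so we may assume $R$ is complete. Write $N=N(R)$. Then $\ell(R/I_n)=\ell(R/(I_n+N))+\ell((I_n+N)/I_n)$. For the second term, $(I_n+N)/I_n\cong N/(N\cap I_n)$, and $m_R^{sn}N\subset N\cap I_n$ for a fixed $s$. Hence $\ell(N/N\cap I_n)\le \ell(N/m_R^{sn}N)=O(n^{\dim N})=o(n^d)$, and this term contributes nothing to the limit. So we reduce to the reduced complete ring $R/N$. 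Let $P_1,\dots,P_t$ be its minimal primes of dimension $d$. The map $R/N\to\bigoplus R/P_i$ has kernel and cokernel of dimension $<d$. Using the same kind of estimate as above (submodules of modules of dimension $<d$ have colengths $O(n^{d-1})$ against $m^{sn}$), I would show $\ell(R/(I_n+N))=\sum_i \ell(R/(I_n+P_i))+o(n^d)$. This needs some care with $\mathrm{Tor}$-type error terms, which I would handle via Artin–Rees-free bounds through $m_R^{sn}$. We are thus reduced to the case where $R$ is a complete local domain.

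For a complete local domain, I would follow the Okounkov body approach of Lazarsfeld–Musta\c{t}\u{a}. Take the normalization $S$; it is finite over $R$, local, and satisfies $\ell_R(S/R)$-type corrections of lower dimension. Choose a regular local ring $S'$ birational and dominating $S$ (obtained by blowing up and localizing at a suitable point) together with a valuation $\nu$ of rational rank $d$ with values in $\ZZ^d$, such as a flag/monomial valuation. Its residue field may be a finite extension $k'$ of $k=R/m_R$, contributing a factor $[k':k]$. The key estimate is that there is a constant $c$ with $\{f:\nu(f)\ge c\,n\}\cap R\subset m_R^{n}$ (a linear Zariski/Izumi-type comparison). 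This gives $\ell(R/I_n)=[k':k]\,\#(\text{semigroup points of level } n \text{ in a bounded region})$ minus the points of $\nu(I_n)$, up to controlled error. Then $\ell(R/I_n)=\#\Gamma(R)_n-\#\Gamma(\mathcal I)_n$ with both truncated at level $cn$, and Khovanskii/Kaveh–Khovanskii asymptotics for semigroups give the limits as volumes of convex regions. I expect this step, establishing the linear comparison and the cone condition in complete generality (excellent is automatic for complete rings), to be the main obstacle.

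\emph{Necessity.} If $\dim N(\hat R)=d$, construct a family where the limsup is not a limit. There is a minimal prime $P$ of $\hat R$ with $\dim \hat R/P=d$ such that $\hat R_P$ is not reduced, so some nonzero $x\in N$ has $\dim(\hat R x)=d$ and $x^2=0$. Define $I_n=m_R^n$ for even $n$ and $I_n=m_R^n+$ (the contraction of) $x\hat R$ for odd $n$, adjusted so that $I_mI_n\subset I_{m+n}$ holds. Here $x^2=0$ makes the products land correctly, for instance by taking $I_n=x m^{\lceil n/2\rceil}+m^n$ on a subsequence pattern. The colengths then differ by $\ell(x\hat R/x\hat R\cap m^n)\sim c\,n^d$ along the two parity subsequences, so the limit fails. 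Descent from $\hat R$ to $R$ requires $\mathfrak m$-primary ideals, which are in bijection under completion.
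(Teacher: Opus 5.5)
The paper does not prove this statement. It quotes it from \cite{C2}, remarking only that the ``if'' direction uses Okounkov bodies and that the ``only if'' direction is due to Dao and Smirnov; the model non-convergent family is Example 5.3 of \cite{C2}, and a variant appears as Example \ref{VolEx}.

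Your sufficiency outline is in line with that Okounkov-body argument, but two steps are misstated.

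\emph{The splitting step.} Take $A=R/N\hookrightarrow B=\bigoplus A/P_i$, after removing the kernel, which your $m_R^{sn}$ bound does handle. The error term $\ell((A\cap I_nB)/I_nA)$ is a subquotient of $A$ itself, so $m_R^{sn}\subset I_n$ alone does not bound it. Instead choose $z\in{\rm ann}_A(B/A)$ outside every $d$-dimensional minimal prime. Then $z(A\cap I_nB)\subset I_nA$, so this module lies in $0:_{A/I_nA}z$, whose length is $\ell(A/(I_nA+zA))=O(n^{d-1})$.

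\emph{The counting step.} The residue field does not give a uniform factor $[k':k]$. Put $K_\lambda=\{f\mid \nu(f)\ge\lambda\}$. The $k$-dimension of $(I_n\cap K_\lambda)/(I_n\cap K_\lambda^+)$ can be anything from $0$ to $[k':k]$, and this holds even when $I_n=R$. So $\ell(R/I_n)$ is not $[k':k]$ times a count of values of $\nu$ on $I_n$. You must count with the semigroups $\Gamma^{(t)}=\{(\lambda,n)\mid \dim_k(I_n\cap K_\lambda)/(I_n\cap K_\lambda^+)\ge t\}$ for $1\le t\le[k':k]$. These are semigroups because multiplication by a nonzero homogeneous element of ${\rm gr}_\nu$ is injective. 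Then sum their truncated counts.

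The genuine gap is in the necessity direction: your family is not a graded family, and the parity mechanism cannot be repaired within the shape you propose.

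\emph{Failure of the graded condition.} In $R=k[[x_1,\ldots,x_d,y]]/(y^2)$ with $x=\overline y$, your definition gives $I_1=m_R$ and $\overline y\in I_3$. So $x_1\overline y\in I_1I_3$, but $x_1\overline y\notin m_R^4=I_4$.

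\emph{Why parity cannot work.} For $I_n=m_R^n+x\,m_R^{n-\sigma(n)}$, using $x^2=0$ gives $I_pI_q=m_R^{p+q}+x\,m_R^{p+q-\max(\sigma(p),\sigma(q))}$. In this model, $I_pI_q\subset I_{p+q}$ forces $\sigma$ to be nondecreasing (up to values $\le 1$). Sandwiching $\sigma(2k+1)$ between $\sigma(2k)$ and $\sigma(2k+2)$ then shows that $\sigma(n)/n$ has the same $\liminf$ and $\limsup$ along even and along odd $n$. Hence $d!\,\ell(R/I_n)/n^d=1+(1-\sigma(n)/n)^d+o(1)$ cannot oscillate with parity.

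\emph{What is needed.} Take a nondecreasing $\sigma$ with $0\le\sigma(n)\le n/2$ such that $\sigma(n)/n$ comes arbitrarily close to $0$ and to $\frac12$ along sparse subsequences (equation (22) of \cite{C2}). In general, take $x\in N(\hat R)$ with $x^2=0$ and $\dim x\hat R=d$ (your choice of $x$ is fine), write $m=m_{\hat R}$, and put $I_n=m^n+xm^{n-\sigma(n)}$. Artin--Rees gives $x\hat R\cap m^n\subset xm^{n-c}$. So once $\sigma(n)\ge c$,
$\ell(\hat R/I_n)=\ell(\hat R/(m^n+x\hat R))+\ell(x\hat R/m^{n-\sigma(n)}x\hat R)$.
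Hence $d!\,\ell(\hat R/I_n)/n^d=e(m;\hat R/x\hat R)+e(m;x\hat R)(1-\sigma(n)/n)^d+o(1)$ with $e(m;x\hat R)>0$, which does not converge. Contracting to $R$ then finishes, as you say.
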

 The condition $\dim N(\hat R)<d$ holds for instance if $R$ is analytically unramified ($\hat R$ is reduced). It was shown by Hailong Dao and Ilya Smirnov that if $\dim N(\hat R)=d$ then there exists a graded $m_R$-family $\mathcal I$ on $R$ such that $\mbox{vol}(\mathcal I)$ is not a limit.
 
 The following ``Volume = Multiplicity'' formula is proven in the generality stated below in \cite{C13}. It was earlier proven  in some cases in \cite{ELS} and  when the ring $R$ is a domain that is essentially of finite type over an algebraically closed field $k$ with $R/m_R=k$ by   Lazarsfeld and Musta\c{t}\u{a} in \cite{LM}. 
  
 \begin{Theorem}\label{volmult}(Theorem 1.1 \cite{C13})  Suppose that $\dim N(\hat R)<d$ and $\mathcal I=\{I_n\}$ is a graded $m_R$-family on $R$. Then 
\begin{equation}\label{eq33}
\mbox{\rm vol}(\mathcal I)=\lim_{p\rightarrow\infty}\frac{e(I_p)}{p^d}.
\end{equation}
\end{Theorem}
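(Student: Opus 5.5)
We prove the two inequalities $\limsup_p e(I_p)/p^d \le \mbox{vol}(\mathcal I)$ and $\mbox{vol}(\mathcal I)\le \liminf_p e(I_p)/p^d$. By Theorem \ref{TheoremVolEx}, the hypothesis $\dim N(\hat R)<d$ gives that $\mbox{vol}(\mathcal I)=\lim_n d!\,\ell(R/I_n)/n^d$ is an honest limit. We use this for $\mathcal I$ itself and for auxiliary graded families built from $\mathcal I$.

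\emph{Easy inequality.} Fix $p\ge 1$. The family $\{I_p^k\}_k$ satisfies $I_p^k\subset I_{pk}$, so $\ell(R/I_{pk})\le \ell(R/I_p^k)$. Dividing by $(pk)^d/d!$ and letting $k\to\infty$ gives
$$
\mbox{vol}(\mathcal I)=\lim_k d!\frac{\ell(R/I_{pk})}{(pk)^d}\le \frac{e(I_p)}{p^d}.
$$
The first equality uses that the limit exists along the subsequence $n=pk$. Hence $\mbox{vol}(\mathcal I)\le \liminf_p e(I_p)/p^d$.

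\emph{Hard inequality.} We must show that for every $\epsilon>0$ and all large $p$, $e(I_p)/p^d\le \mbox{vol}(\mathcal I)+\epsilon$. For fixed $p$, consider the graded $m_R$-family $\mathcal J^{(p)}=\{J_{p,n}\}_n$ with $J_{p,n}=I_p^{k}I_r$, where $n=kp+r$ and $0\le r<p$. This is a graded family with $J_{p,n}\subset I_n$, and its Rees algebra is finite over $R[I_pt^p]$. Therefore $d!\,\ell(R/J_{p,n})/n^d\to e(I_p)/p^d$; the point is that the finitely many ideals $I_r$ contribute lower order terms, because $m_R^{sr}\subset I_r$. So the task is to compare $\ell(R/I_p^k)$ with $\ell(R/I_{pk})$, uniformly in the sense that
$$
\lim_k \frac{\ell(I_{pk}/I_p^k)}{(pk)^d}\to 0 \quad\text{as } p\to\infty.
$$
This is exactly the Okounkov body / semigroup approximation statement. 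Following the method behind Theorem \ref{TheoremVolEx}, I would first reduce to $R$ complete (lengths are unchanged by completion and $\dim N(\hat R)<d$ is preserved), then quotient by the nilradical. This changes lengths only by $O(n^{d-1})$, because $\dim N<d$ and $m_R^{sn}\subset I_n$ give a bound of the form $\ell(N/(N\cap m^{sn}))=O(n^{d-1})$. Next pass to the minimal primes of dimension $d$, using additivity of both volume and multiplicity over $d$-dimensional components, to reduce to a complete local domain. There, choose a rank-$d$ valuation $\nu$ with value group $\ZZ^d$ dominating $R$, as in \cite{C2}, so that $\ell(R/I_n)$ equals a count of semigroup points in a bounded region. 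Let $\Gamma$ be the semigroup $\{(\nu(f),n): f\in I_n\}$ and $\Gamma_p$ the subsemigroup generated by level $p$. The theorem of Kaveh--Khovanskii on approximation of semigroups says that the cone volumes attached to $\Gamma_p$ converge to those of $\Gamma$ as $p\to\infty$. The quantity $e(I_p)/p^d$ is controlled by the semigroup of $\{I_p^k\}$, which contains $\Gamma_p$ truncated at multiples of $p$. The resulting counts then give $\limsup_p e(I_p)/p^d\le \mbox{vol}(\mathcal I)$.

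The main obstacle is this last step. The valuation count for $\ell(R/I_p^k)$ must be dominated by the count for the semigroup generated by $\Gamma$ at level $p$. One must also handle the residue field extension degree $[V_\nu/m_\nu:R/m_R]$, which is finite after choosing $\nu$ carefully, and the shift needed so that all ideals contain a fixed power $m_R^{c}$ compatibly with the valuation cone. I would copy the construction of \cite{C2}, where $\ell(R/I_n)$ is written as a difference of two semigroup counts, one for $\mathcal I$ and one for $\{m_R^{cn}\}$. I would then apply Kaveh--Khovanskii approximation to both counts simultaneously.
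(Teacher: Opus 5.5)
Your outline is correct and follows essentially the Okounkov-body argument behind the cited result. The paper does not reprove Theorem \ref{volmult}; it attributes it to \cite{C13} and notes that its proof rests on the machinery of \cite{KK}, \cite{LM}, \cite{C2}. As there, you get $\mathrm{vol}(\mathcal I)\le e(I_p)/p^d$ from $I_p^k\subset I_{pk}$ and the existence of the volume limit, and the reverse inequality by reducing to a complete domain and approximating the valuation semigroup by the subsemigroup generated in degree $p$, which sits inside the semigroup of $\{I_p^k\}$.

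One slip does not affect the argument: your $\mathcal J^{(p)}$ is not a graded family in general, since that would require $I_rI_{r'}\subset I_pI_{r+r'-p}$ whenever $r+r'\ge p$. All you actually use is $d!\,\ell(R/I_p^k)/(pk)^d\to e(I_p)/p^d$, which holds.
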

Let $R$ be a Noetherian local ring and $\mathcal I$ be a graded $m_R$-family. We define the multiplicity $e(\mathcal I)$ of 
$\mathcal I$ as
$$
e(\mathcal I)=\limsup_{p\rightarrow\infty}\frac{e(I_p)}{p^d}.
$$

Although the volume $\mbox{vol}(\mathcal I)$ does not  exist in general as a limit on an arbitrary Noetherian local ring $R$,  the multiplicity $e(\mathcal I)$ always does exist. 

\begin{Theorem}\label{multlim} Let $R$ be a $d$-dimensional Noetherian local ring and $\mathcal I=\{I_n\}$ be a graded $m_R$-family. Then the multiplicity 
$$
e(\mathcal I)=\lim_{p\rightarrow\infty}\frac{e(I_p)}{p^d}
$$
 exists as a limit. 
\end{Theorem}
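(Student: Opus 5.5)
Since $I_pI_q\subset I_{p+q}$ implies $I_p^nI_q^n\subset I_{p+q}^n$ for every $n$, the sequence $a_p=e(I_p)$ has a sub-additivity property. The natural plan is to exploit this with a Fekete-type argument for $a_p^{1/d}$, using the Minkowski inequality for multiplicities. Concretely, I would first reduce to the case where $R$ is complete and then to a reduced (or even domain) quotient. Multiplicities are unchanged by passing to $\hat R$: $e(I\hat R)=e(I)$, and $I_p\hat R$ is again a graded $m_{\hat R}$-family. By the associativity formula,
$$e(I)=\sum_{P}\ell(\hat R_P)\,e(I(\hat R/P)),$$
where $P$ runs over the minimal primes of $\hat R$ with $\dim \hat R/P=d$. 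It therefore suffices to prove the limit exists for each $e(I_p(\hat R/P))$, since a finite positive combination of convergent sequences converges. This reduction is the key point: it removes nilpotents, which are exactly what obstruct the limit for $\mathrm{vol}(\mathcal I)$ in Theorem \ref{TheoremVolEx}.

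Once $R$ is a complete domain, $\dim N(\hat R)<d$ holds trivially, so Theorem \ref{volmult} applies directly. It gives that $\lim_p e(I_p)/p^d$ exists and equals $\mathrm{vol}(\mathcal I)$. Combined with the reduction, this proves Theorem \ref{multlim}.

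To stay independent of volumes, as the abstract suggests, I would instead argue directly. Rees and Sharp's Minkowski inequality $e(IJ)^{1/d}\le e(I)^{1/d}+e(J)^{1/d}$ holds for $m_R$-primary ideals in any Noetherian local ring. Together with $e(I_{p+q})\le e(I_pI_q)$, which follows from the inclusion $I_pI_q\subset I_{p+q}$, it shows that $b_p=e(I_p)^{1/d}$ is subadditive. Fekete's lemma then gives that $b_p/p$ converges to $\inf_p b_p/p$. Since $b_p/p\ge 0$, the infimum is finite, and the limit of $e(I_p)/p^d=(b_p/p)^d$ exists. This route works in full generality without any reduction.

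The main obstacle is justifying the Minkowski inequality in an arbitrary Noetherian local ring (rather than, say, a reduced or analytically unramified one). I would first try citing it as a known classical result, via Teissier and Rees–Sharp through mixed multiplicities, and otherwise fall back on the completion and associativity reduction above.
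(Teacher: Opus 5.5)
Your proposal is correct. Your first route is exactly the short proof the paper gives right after stating the theorem: pass to $\hat R$, apply associativity over the minimal primes $P$ with $\dim\hat R/P=d$, then apply Theorem \ref{volmult} to each complete domain $\hat R/P$.

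Your second, volume-free route is genuinely different from the paper's volume-free proof (Theorem \ref{Theorem5}). The paper writes $e(I_n)=-((-F_n)^d)$ on the blowup of $I_n$ (Theorem \ref{Theorem20}). It then proves, in Theorem \ref{Theorem70}, a hand-made Fekete-type estimate directly for the nef divisors $-F_n/n$, using $I_m^n\subset I_{mn}$, $I_iI_{mn}\subset I_{mn+i}$ and monotonicity of intersection numbers of nef divisors (Lemma \ref{Lemma4}). You instead use the Minkowski inequality to make $b_p=e(I_p)^{1/d}$ literally subadditive, so Fekete's lemma applies verbatim.

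The step you flag as the main obstacle is not one. The inequality $e(IJ)^{1/d}\le e(I)^{1/d}+e(J)^{1/d}$ holds for $m_R$-primary ideals in every Noetherian local ring of dimension $d\ge 1$ (Rees--Sharp; see Huneke--Swanson, Corollary 17.7.3; for $d=1$ it is even an equality). The paper itself invokes it in that generality in the proof of Theorem \ref{MinkIneq}. You only need to treat $d=0$ separately, where $1/d$ is meaningless but $e(I_p)=e(R)$ for all $p$.

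Both arguments give the same extra information, $e(\mathcal I)=\inf_p e(I_p)/p^d$; in the paper this is the $\sup$ in Theorem \ref{Theorem70}. Your argument is far shorter and needs neither intersection theory on $\mbox{BirMod}(R)$ nor Theorem \ref{Theorem20}.

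What the paper's machinery buys is the multi-family version. Theorem \ref{Theorem70} allows several families with independent indices $n_1,\ldots,n_r\to\infty$. Multilinearity then shows that $\lim_m e(I(1)_{mn_1}\cdots I(r)_{mn_r})/m^d$ is a homogeneous polynomial in $(n_1,\ldots,n_r)$ (Theorem \ref{Minkcoef*}), with coefficients given by (\ref{eqIn15}). Fekete applied to the graded family $\{I(1)_{mn_1}\cdots I(r)_{mn_r}\}_m$ gives existence of each such limit, but not their polynomial dependence on $(n_1,\ldots,n_r)$.
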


Here is a simple proof which relies on the volume = multiplicity formula (\ref{eq33}) and the existence of the volume as a limit (\ref{eq10}) on complete local domains. 

\begin{proof} By the Associativity formula for  multiplicity of a module with respect to  an ideal (Theorem 11.2.4 \cite{HS}), for all $n$,
$$
e(I_n)=e(I_n\hat R)=\sum \ell_{\hat R/P}(\hat R/P)e(I_n\hat R/P),
$$
where the sum is over all minimal primes $P$ of $\hat R$ such that $\dim \hat R/P=d$.
Thus 
$$
\lim_{n\rightarrow\infty}\frac{e(I_n)}{n^d}=\lim_{n\rightarrow\infty}\frac{\left(\sum \ell_{\hat R_P}(\hat R_P)e(I_n\hat R/P)\right)}{n^d}
=\sum \ell_{\hat R/P}(\hat R/P)\lim_{n\rightarrow\infty}\frac{e(I_n\hat R/P)}{n^d}
$$
 exists by Theorem \ref{volmult} since each $\hat R/P$ is analytically irreducible, and hence $N(\hat R/P)=0$.
\end{proof}

The proofs of (\ref{eq10}) and (\ref{eq33}) use the rather involved theory of Okounkov bodies,  which was introduced in the papers \cite{Ok}, \cite{KK} and \cite{LM}.
We give  a simple direct proof of Theorem \ref{multlim} in Theorem \ref{Theorem5}, using limits of intersection products. This proof  does not use the theories of Volume or of Okounkov bodies.

In the case that $d=0$, all $m_R$-primary ideals are nilpotent, so that $e(I_n)=e(R)$ for all $n$. Thus $e(\mathcal I)=e(R)$ for all graded $m_R$-families $\mathcal I$. All theorems in this paper about multiplicities of graded $m_R$-families are thus trivial in the case that $d=0$, so we will only address the case $d\ge 1$ in our proofs.

The beginning point of this proof is the following interpretation of the multiplicity of an $m_R$-primary ideal as an intersection product.

\begin{Theorem}\label{Theorem20}(Theorem A, \cite{CM})
Let $R$ be a $d$-dimensional Noetherian local ring  and $I\subset R$ be an $m_R$-primary ideal. 
 Let $\pi:Y\rightarrow \mbox{Spec}(R)$ be a birational proper morphism such that $I\mathcal O_Y$ is invertible. Then 
\begin{equation}
e(I)=-((I\mathcal O_Y)^d).
\end{equation}
\end{Theorem}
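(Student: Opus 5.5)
The plan is to reduce to the case where $Y\rightarrow \mbox{Spec}(R)$ is the blowup of $I$, compute the top self-intersection of $I\mathcal O_Y$ via the Hilbert–Samuel function of the Rees algebra, and then pass to an arbitrary $Y$ by the projection formula. The intersection product here is the one defined for proper schemes over $\mbox{Spec}(R)$ with exceptional locus over the closed point. For a line bundle $\mathcal L$ whose sections cut out only components lying over $m_R$, $(\mathcal L^d)$ is defined as the leading coefficient (times $d!$) of the Euler characteristic $\chi(\mathcal O_Y/\mathcal L^n)$ or $\chi(\mathcal L^{n}\otimes \mathcal O_E)$, suitably signed. I will take as given the standard foundations (Snapper/Kleiman style intersection theory on proper $R$-schemes), and derive the formula in three steps.

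\emph{Step 1 (the blowup).} Let $X=\mbox{Proj}(\oplus_{n\ge0}I^n)\rightarrow \mbox{Spec}(R)$ be the blowup of $I$, with $\mathcal O_X(1)=I\mathcal O_X$. For $n\gg0$ we have $\Gamma(X,I^n\mathcal O_X)=\overline{I^n}$, or at least the graded pieces agree with $I^n$ up to a finitely generated error in bounded degree. Moreover $H^i(X,I^n\mathcal O_X)=0$ for $i>0$ and $n\gg0$ by Serre vanishing, since $\mathcal O_X(1)$ is ample relative to $\mbox{Spec}(R)$. Set $E=V(I\mathcal O_X)$. From the exact sequences
$$
0\rightarrow I^{n+1}\mathcal O_X\rightarrow I^n\mathcal O_X\rightarrow I^n\mathcal O_X\otimes\mathcal O_E\rightarrow 0
$$
I obtain $\chi(\mathcal O_E(n))=\ell(I^n/I^{n+1})$ for $n\gg0$. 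Equivalently, $\chi(\mathcal O_E(n))$ is the Hilbert polynomial of the associated graded ring $\mbox{gr}_I(R)$, since $E=\mbox{Proj}(\mbox{gr}_I(R))$. Its leading term is $e(I)n^{d-1}/(d-1)!$. On the other hand, by definition $(\mathcal L^{d-1}\cdot E)$ is the leading coefficient of $\chi(\mathcal L^n|_E)$ times $(d-1)!$. Because $E$ is the effective Cartier divisor of $\mathcal L^{-1}$, that is $\mathcal O_X(-E)=I\mathcal O_X$, we get $(\mathcal L^d)=(\mathcal L^{d-1}\cdot(-E))=-e(I)$. Care is needed with the sign convention $\mathcal O_X(-E)=I\mathcal O_X$ and with the fact that $\dim E=d-1$, which holds since $\mbox{gr}_I(R)$ has dimension $d$.

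\emph{Step 2 (arbitrary $Y$).} Given $\pi:Y\rightarrow\mbox{Spec}(R)$ birational and proper with $I\mathcal O_Y$ invertible, the universal property of blowing up gives a factorization $Y\xrightarrow{f}X\rightarrow\mbox{Spec}(R)$ with $f^*(I\mathcal O_X)=I\mathcal O_Y$. The map $f$ is proper and birational; the latter holds since both $Y$ and $X$ are birational to $\mbox{Spec}(R)$, meaning they are isomorphisms over a dense open set containing the generic points of the $d$-dimensional components. The projection formula for intersection numbers then gives $((f^*\mathcal L)^d)=\deg(f)(\mathcal L^d)=(\mathcal L^d)$. Only $d$-dimensional components contribute, and on those $f$ has degree $1$.

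\emph{Main obstacle.} The main difficulty is the non-reduced, non-equidimensional setting. Components of $\mbox{Spec}(R)$ of dimension $<d$, and embedded structure, must be shown not to affect either side. For the left side this is the associativity formula; for the right side I would verify that the intersection number is additive over the $d$-dimensional irreducible components of $Y$, weighted by the lengths at their generic points. That reduces both Steps 1 and 2 to the case where $R$ is a domain, where birationality is unambiguous and the projection formula applies cleanly.
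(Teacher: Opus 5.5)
The paper gives no proof of this statement. It quotes it as Theorem A of \cite{CM}, where the intersection product over an arbitrary Noetherian local ring is built from Thorup's rational equivalence, and then uses it as a black box. Measured against that, your argument is correct and is the standard route.

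Step 1 is valid for any Noetherian local ring with $d\ge 1$. The subscheme $E=V(I\mathcal O_X)$ is literally $\mathrm{Proj}(\mathrm{gr}_I(R))$, with $\mathcal O_E(1)=\mathcal L|_E$. Hence $\chi(\mathcal O_E(n))=\ell(I^n/I^{n+1})$ for $n\gg 0$, with leading coefficient $e(I)/(d-1)!$, and so $(\mathcal L^d)=-(\mathcal L^{d-1}\cdot E)=-e(I)$.

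Step 2 is the pullback invariance the paper quotes from \cite{CM} (Proposition 2.8) in Section 2. However, that proposition is stated only for morphisms inside $\mathrm{BirMod}(R)$. The $Y$ in the theorem need not be an isomorphism over the punctured spectrum; only the divisor of $I\mathcal O_Y$ is known to have exceptional support. So you really need the slightly more general form: $f_*[Y]=[X]$ for the induced birational $f\colon Y\to X$, together with the projection formula. That is what you invoke, and it is fine.

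Two corrections.

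First, the claim $\Gamma(X,I^n\mathcal O_X)=\overline{I^n}$ is false for the (non-normalized) blowup. What is true is $\Gamma(X,\mathcal O_X(n))=I^n$ for $n\gg 0$. This holds because the local cohomology of $R[It]$ with support in its irrelevant ideal vanishes in large degrees. For a counterexample to the integral-closure version, take $I=(x^2,y^2)\subset k[[x,y]]$: then $\overline{I^n}=(x,y)^{2n}\ne I^n$. You never actually use the integral-closure version, since $E\cong\mathrm{Proj}(\mathrm{gr}_I(R))$ already gives the Euler characteristic. So nothing breaks.

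Second, the reduction to domains in your last paragraph is not needed; the $\mathrm{gr}_I(R)$ computation works verbatim in general. Non-reduced or non-equidimensional structure genuinely enters only through the two foundational identifications you are assuming:
\begin{itemize}
\item that $E\cdot[X]$ is the $(d-1)$-dimensional cycle of $\mathcal O_E$, so that the intersection-theoretic number $(\mathcal L^{d-1}\cdot E)$ equals $(d-1)!$ times the leading coefficient of $\chi(\mathcal O_E(n))$;
\item that a birational $f$ matches the $d$-dimensional components of $Y$ and $X$ with equal lengths at their generic points.
\end{itemize}
These are exactly what \cite{CM} supplies.
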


 In \cite{Rm} Ramanujam obtained the above formula when there is a good intersection product defined on birational proper morphisms to $\mbox{Spec}(R)$. In \cite{Laz1}, the formula is stated for the
case that R is the local ring of a closed point on an algebraic scheme over an algebraically closed
field and in \cite{BLQ} the formula is proven when R is an analytically irreducible domain and Y is normal. The fact that the good intersection product exists above arbitrary Noetherian local rings $R$ follows from 
the theory of rational equivalence developed in \cite{Th} as applied to the intersection theory developed in \cite{Fl} and \cite{Kl}. 

Here is a short outline of our proof of Theorem \ref{multlim}.
Let $Y_n$ be the blowup of the ideal $I_n$ above $\mbox{Spec}(R)$. Then $I_n\mathcal O_{Y_n}=\mathcal O_{Y_n}(-
F_n)$ for some effective Cartier divisor $F_n$ on $Y_n$ with exceptional support. We compute $e(I_n)=-((-F_n)^d)$, the self intersection of $-F_n$ on $Y_n$.
Using only the fact that  $\mathcal I$ is a graded $m_R$-family, we show as a consequence of  the more general Theorem \ref{Theorem70}, that 
\begin{equation}\label{eqIn11}
\lim_{n\rightarrow\infty}\frac{e(I_n)}{n^d}=\lim_{n\rightarrow \infty}-\left(\left(\frac{-F_n}{n}\right)^d\right)
\end{equation}
exists, establishing Theorem \ref{multlim}.

We also obtain multiplicities of modules, and show that the multiplicity is a natural limit. 
If $R\rightarrow S$ is a local homomorphism and $\mathcal I=\{I_n\}$ is a graded $m_R$-family, then we define a graded $m_S$-family by 
\begin{equation}\label{eqIn5}
\mathcal IS=\{I_nS\}.
\end{equation}

 Let $M$ be a finitely generated $R$-module. Define the multiplicity
$$
e(\mathcal I;M)=\limsup_{n\rightarrow\infty}\frac{e(I_n;M)}{n^d},
$$
where $e(I_n;M)=\displaystyle\lim_{m\rightarrow\infty}d!\frac{\ell(M/I_n^mM)}{m^d}$ is the ordinary multiplicity of the Module $M$ with respect to $I_n$.

In Corollary \ref{Cor73}, we  find an associativity formula for multiplicity of graded $m_R$-families. This corollary shows 
that 
$$
\lim_{n\rightarrow\infty}\frac{e(I_n;M)}{n^d}=\sum_{P\in\Lambda}e(\mathcal I(R/P))\ell_{R_P}(M_P)
$$
where  $\Lambda$ is the set of minimal prime ideals $P$ of $R$ such that $\dim R/P=d$. 
In particular, the multiplicity
$$
e(\mathcal I;M)=\lim_{n\rightarrow\infty}\frac{e(I_n;M)}{n^d}
$$
 exists as a limit.
 
 If $\mathcal I$ is a graded $m_R$-family on a $d$-dimensional local ring $R$ and $M$ is a finitely generated $R$-module, we define
 $$
 {\rm vol}(\mathcal I;M)=\limsup_{n\rightarrow\infty}d!\frac{\ell(M/I_nM)}{n^d}.
 $$
 
 We always have that
 $$
 {\rm vol}(\mathcal I;M)\le e(\mathcal I;M)
 $$
 as is shown in Proposition \ref{PropVUB}. However, there exist examples where the inequality is strict, as is shown in Example \ref{VolEx}.
 In particular, the  volume = multiplicity formula of Theorem \ref{volmult} for rings $R$ such that $\dim N(\hat R)<d$ does not extend to more general rings.

 \subsection{Mixed multiplicities of graded $m_R$-families}\label{SecMix} 
 The theory of mixed multiplicities of $m_R$-primary ideals in a Noetherian local ring $R$
with maximal ideal $m_R$, was initiated by Bhattacharya \cite{Bh}, Rees \cite{R} and Teissier and Risler
\cite{T1}. 
 
Let $R$ be a $d$-dimensional Noetherian local ring  and   $I_1,\ldots, I_r$ be $m_R$-primary ideals of $R$. By Theorem 17.4.2 and  Definition 17.4.3 \cite{HS}, 
for any $n_1,\ldots, n_r\in \ZZ_{\ge 0}$, we have
$$
e(I_1^{n_1}\cdots I_r^{n_r}) = \sum_{\stackrel{d_1,\ldots, d_r\in \ZZ_{\ge 0},}{d_1+\cdots+d_r=d}}\frac{d!}{d_1!\cdots, d_r!}e(I_1^{[d_1]},\ldots, I_r^{[d_r]})n_1^{d_1}\cdots n_r^{d_r}
$$
is a homogeneous polynomial of degree $d$.
 The coefficients $e(I_1^{[d_1]},\ldots, I_r^{[d_r]})$ are called the mixed multiplicities with respect to $I_1,\ldots,I_r$.

The following theorem gives an interpretation of  mixed multiplicities of ideals in terms of intersection products. This formulation is derived from 
Theorem \ref{Theorem20}.

\begin{Theorem}\label{MixMultIdeal}(Theorem 5.2 \cite{CM})
Let $R$ be a $d$-dimensional Noetherian local ring  and   $I_1,\ldots, I_r$ be $m_R$-primary ideals of $R$. Let  $\pi:Y\rightarrow \mbox{Spec}(R)$ be a proper birational morphism such that $I_i\mathcal O_Y$ is locally principal for every $i$. 
 Then for every $d_1,\ldots, d_r\in \ZZ_{\ge 0}$ such that $d_1+\cdots+d_r=d$ we have
$$ 
e(I_1^{[d_1]},\ldots, I_r^{[d_r]})=-\big((I_1\mathcal O_Y)^{d_1}\cdot\ldots\cdot(I_r\mathcal O_Y)^{d_r}\big).
$$
For all $n_1,\ldots,n_r\in\ZZ_{\ge 0}$,
$$
\begin{array}{lll}
e(I_1^{n_1}\cdots I_r^{n_r})&=&-((I_1^{n_1}\cdots I_r^{n_r}\mathcal O_Y)^d)\\
&=&
\sum_{\stackrel{d_1,\ldots, d_r\in \ZZ_{\ge 0},}{d_1+\cdots+d_r=d}}\frac{-d!}{d_1!\cdots d_r!}
((I_1\mathcal O_Y)^{d_1}\cdot\ldots\cdot (I_r\mathcal O_Y)^{d_r})
n_1^{d_1}\cdots n_r^{d_r}
\end{array}
$$
\end{Theorem}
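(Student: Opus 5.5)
The plan is to deduce both formulas from Theorem \ref{Theorem20}, applied to the products $J=I_1^{n_1}\cdots I_r^{n_r}$, together with the multilinearity of intersection products of Cartier divisors on $Y$.

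First, fix $n_1,\ldots,n_r\in\ZZ_{\ge 0}$ and set $J=I_1^{n_1}\cdots I_r^{n_r}$. Since each $I_i\mathcal O_Y$ is invertible, $J\mathcal O_Y=(I_1\mathcal O_Y)^{n_1}\cdots(I_r\mathcal O_Y)^{n_r}$ is invertible, and $J$ is $m_R$-primary. (If all $n_i=0$, then $J=R$, and both sides of the formula should be read as $0$, or we restrict to the case where $J$ is proper.) Theorem \ref{Theorem20} then gives
$$e(J)=-((J\mathcal O_Y)^d).$$
Next, write $I_i\mathcal O_Y=\mathcal O_Y(-F_i)$ for effective Cartier divisors $F_i$. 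Then $J\mathcal O_Y=\mathcal O_Y(-\sum n_iF_i)$. The intersection product $(D_1\cdot\ldots\cdot D_d)$ of Cartier divisors on the proper $R$-scheme $Y$ with exceptional support (from \cite{CM}, built on \cite{Fl}, \cite{Kl}, \cite{Th}) is symmetric and multilinear. So the multinomial expansion gives
$$-((J\mathcal O_Y)^d)=\sum_{d_1+\cdots+d_r=d}\frac{-d!}{d_1!\cdots d_r!}((I_1\mathcal O_Y)^{d_1}\cdot\ldots\cdot(I_r\mathcal O_Y)^{d_r})\,n_1^{d_1}\cdots n_r^{d_r}.$$
This is the second displayed formula.

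For the first claim, compare this with the defining expansion of $e(I_1^{n_1}\cdots I_r^{n_r})$ from \cite{HS}. Both are homogeneous polynomials of degree $d$ in $(n_1,\ldots,n_r)$, and they agree for all $n_i\in\ZZ_{\ge0}$ (at least for all $n_i\ge 1$). A polynomial vanishing on $\ZZ_{>0}^r$ is zero, so the coefficients coincide. The multinomial coefficients $\frac{d!}{d_1!\cdots d_r!}$ appear on both sides and are nonzero, so we conclude
$$e(I_1^{[d_1]},\ldots,I_r^{[d_r]})=-((I_1\mathcal O_Y)^{d_1}\cdot\ldots\cdot(I_r\mathcal O_Y)^{d_r}).$$

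The main point needing care is the multilinearity and symmetry of the intersection product in this generality, i.e.\ over an arbitrary Noetherian local base, where $Y$ is only proper over $\mathrm{Spec}(R)$ and the classes are supported over the closed point. I would take this from the construction recalled after Theorem \ref{Theorem20}. There, intersecting Cartier divisors with cycles on the closed fiber is defined via Fulton--Kleiman intersection theory with Thorup's rational equivalence, and multilinearity and commutativity hold as in \cite{Fl}. Everything else is formal.
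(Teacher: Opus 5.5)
Your proposal is correct and takes essentially the same route as the paper, which says this formula is derived from Theorem \ref{Theorem20} (applied to $I_1^{n_1}\cdots I_r^{n_r}$), then expanded by multilinearity of the intersection product, with coefficients compared against the defining polynomial for mixed multiplicities. Reading ``locally principal'' as ``invertible'' is necessary for the intersection products $((I_1\mathcal O_Y)^{d_1}\cdot\ldots\cdot(I_r\mathcal O_Y)^{d_r})$ to make sense, so that step is not a gap.
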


The conclusions of Theorem \ref{MixMultIdeal}  are stated in \cite[Section 1.6B]{Laz1} under the assumption that  $R$ is the local ring of a closed point on a finite type scheme over an algebraically closed field. Related formulas  appear in \cite{KT}.

We  show that mixed multiplicities  exist for graded $m_R$-families in arbitrary Noetherian local rings $R$ in Section \ref{SecMult}.
 We prove the following theorem, generalizing Theorem \ref{MixMultIdeal} to graded $m_R$-families.
 
 \begin{Theorem}\label{Minkcoef*} Let $R$ be a $d$-dimensional Noetherian local ring and $\mathcal I(1),\ldots,\mathcal I(r)$ be graded $m_R$-families. Then there exists a homogeneous real polynomial $P(n_1,\ldots,n_r)$ of degree $d$ such that 
$$
\lim_{m\rightarrow\infty} \frac{e(I(1)_{mn_1}\cdots\ I(r)_{mn_r})}{m^d}=P(n_1,\ldots,n_r)
$$
for all $n_1,\ldots,n_r\in \ZZ_{\ge 0}$.
\end{Theorem}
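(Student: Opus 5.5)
The plan is to reduce to the mixed-multiplicity formula for ideals, Theorem \ref{MixMultIdeal}, at each finite level, and then pass to the limit one coefficient at a time. For $m\ge 1$, the products $I(1)_m^{n_1}\cdots I(r)_m^{n_r}$ are $m_R$-primary. Theorem \ref{MixMultIdeal} gives
$$
e(I(1)_m^{n_1}\cdots I(r)_m^{n_r})=\sum_{d_1+\cdots+d_r=d}\frac{d!}{d_1!\cdots d_r!}e(I(1)_m^{[d_1]},\ldots,I(r)_m^{[d_r]})n_1^{d_1}\cdots n_r^{d_r}.
$$
So the natural candidate is
$$
P(n_1,\ldots,n_r)=\sum\frac{d!}{d_1!\cdots d_r!}\,a_{d_1,\ldots,d_r}\,n_1^{d_1}\cdots n_r^{d_r},\qquad a_{d_1,\ldots,d_r}=\lim_{m\to\infty}\frac{e(I(1)_m^{[d_1]},\ldots,I(r)_m^{[d_r]})}{m^d}.
$$
The argument then splits into two tasks: show that the limits $a_{d_1,\ldots,d_r}$ exist, and compare $I(1)_{mn_1}\cdots I(r)_{mn_r}$ with $I(1)_m^{n_1}\cdots I(r)_m^{n_r}$.

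For existence, I would use the same machinery the paper uses for Theorem \ref{multlim}, namely Theorem \ref{Theorem70}. Let $Y_m$ be the blowup of $I(1)_m\cdots I(r)_m$. On $Y_m$ each $I(i)_m\mathcal O_{Y_m}=\mathcal O_{Y_m}(-F_{i,m})$ is invertible, and
$$
e(I(1)_m^{[d_1]},\ldots,I(r)_m^{[d_r]})=-((-F_{1,m})^{d_1}\cdots(-F_{r,m})^{d_r}).
$$
The graded-family property $I(i)_aI(i)_b\subset I(i)_{a+b}$ gives, on a common blowup, $F_{i,a+b}\le F_{i,a}+F_{i,b}$. These are exactly the superadditivity relations that make $-((-F_m/m)^d)$ converge in Theorem \ref{Theorem70}. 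I expect Theorem \ref{Theorem70}, being the "more general" statement, to handle several families, or mixed products of limits of such divisors. In that case I would invoke it directly to get existence of the limit of $-\bigl(\prod_i(-F_{i,m}/m)^{d_i}\bigr)$.

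If Theorem \ref{Theorem70} only covers a single family, I would apply it to the graded $m_R$-families
$$
\mathcal J^{(n)}=\{I(1)_{kn_1}\cdots I(r)_{kn_r}\}_k .
$$
This yields existence of $\lim_m e(I(1)_{mn_1}\cdots I(r)_{mn_r})/m^d=:Q(n_1,\ldots,n_r)$ for each fixed tuple. Polynomiality would then be proved separately.

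The comparison and polynomiality step is where I expect the main difficulty. The inclusion $I(1)_m^{n_1}\cdots I(r)_m^{n_r}\subset I(1)_{mn_1}\cdots I(r)_{mn_r}$ gives one inequality:
$$
Q(n)\le \liminf_m \frac{\sum\binom{d}{d_1,\ldots,d_r}e(I(i)_m^{[d_i]})n^{d}}{m^d}.
$$
For the reverse, I would use the single-family limit in $k$. For fixed $m$, the family $\{I(1)_{kmn_1}\cdots\}$ contains $\{(I(1)_m^{n_1}\cdots)^{k}\}$, and this suggests interchanging limits. The key point to establish is that the order of the limits in $m$ and $k$ does not matter, i.e.\ that $e(I(1)_{mn_1}\cdots)/m^d$ and $e(I(1)_m^{n_1}\cdots)/m^d$ have the same limit.

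I would first try to get this from a Fekete-type argument on the intersection numbers: write $F_{i,mN}\le N F_{i,m}$ and take $m$ along multiples, using the monotonicity of intersection products of anti-effective exceptional divisors. The monotonicity I would need is that $-\prod(-D_i)$ decreases when each $D_i$ decreases, which should follow from Theorem \ref{Theorem20}-type nefness of $-F$.

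Once the two limits agree, $Q$ equals a limit of homogeneous degree-$d$ polynomials whose coefficients converge. Hence $Q=P$ is a homogeneous real polynomial of degree $d$. The coefficient convergence follows by evaluating at $\binom{d+r-1}{d}$ generic integer points and inverting the resulting Vandermonde-type system.
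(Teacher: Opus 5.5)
There is a genuine gap, and it sits exactly in the step you flag as the main difficulty. You never prove the reverse inequality $\liminf_m e(I(1)_{mn_1}\cdots I(r)_{mn_r})/m^d\ge P(n_1,\ldots,n_r)$. Each tool you actually write down gives only the upper bound $Q\le P$:
the inclusion $I(1)_m^{n_1}\cdots I(r)_m^{n_r}\subset I(1)_{mn_1}\cdots I(r)_{mn_r}$; the containment of $\{(I(1)_m^{n_1}\cdots)^k\}$ in $\{I(1)_{kmn_1}\cdots\}$; and $F_{i,mN}\le NF_{i,m}$ applied with base level $m$ and $N=n_i$. The ``interchange of limits in $m$ and $k$'' is named as the thing to establish and then left as something you would try. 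Your existence step is fine. Theorem \ref{Theorem70} does treat $r$ families with independent indices, and together with Theorem \ref{MixMultIdeal} it gives the limits $a_{d_1,\ldots,d_r}$.

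The missing idea is to do the comparison with that multi-index limit, or with a sandwich, rather than through $I(1)_m^{n_1}\cdots I(r)_m^{n_r}$. The paper works on a common blowup, where Theorem \ref{Theorem20} and multilinearity give, for $n_1,\ldots,n_r>0$,
\[
\frac{e(I(1)_{mn_1}\cdots I(r)_{mn_r})}{m^d}=\sum_{d_1+\cdots+d_r=d}\frac{-d!}{d_1!\cdots d_r!}\Bigl(\Bigl(\frac{-F(1)_{mn_1}}{mn_1}\Bigr)^{d_1}\cdot\ldots\cdot\Bigl(\frac{-F(r)_{mn_r}}{mn_r}\Bigr)^{d_r}\Bigr)n_1^{d_1}\cdots n_r^{d_r}.
\]
Theorem \ref{Theorem70} gives the limit as all indices go to infinity independently. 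So the limit along $(mn_1,\ldots,mn_r)$ equals the limit along $(m,\ldots,m)$, and no comparison with $I(j)_m^{n_j}$ and no Vandermonde inversion is needed.

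If you want to keep your framework, a sandwich closes the gap. Let $L$ be a common multiple of the nonzero $n_j$. Since $I(j)_{mn_j}^{L/n_j}\subset I(j)_{mL}$, you get $(\prod_j I(j)_{mn_j})^L\subset\prod_j I(j)_{mL}^{n_j}$, and hence
\[
P_{mL}(n)\le\frac{e(I(1)_{mn_1}\cdots I(r)_{mn_r})}{m^d}\le P_m(n),\qquad P_k(n):=\frac{e(I(1)_k^{n_1}\cdots I(r)_k^{n_r})}{k^d}.
\]
Both bounds tend to $P(n)$ because the coefficients of $P_k$ converge. This is what ``take $m$ along multiples'' needs to become. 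The relation $F_{i,mN}\le NF_{i,m}$ must be used with base $mn_i$ and multiple $mL$, i.e. $F_{i,mL}\le (L/n_i)F_{i,mn_i}$, not at base level $m$.
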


Write
$$
P(n_1,\ldots,n_r)=\sum_{\stackrel{d_1,\ldots, d_r\in \ZZ_{\ge 0},}{d_1+\cdots+d_r=d}}    \frac{d!}{d_1!\cdots d_r!}e(\mathcal I(1)^{[d_1]},\ldots,\mathcal I(r)^{[d_r]})n_1^{d_1}\cdots n_r^{d_r}.
$$
The coefficients $e(\mathcal I(1)^{[d_1]},\ldots,\mathcal I(r)^{[d_r]})$ are called the mixed multiplicities of $\mathcal I(1),\ldots,\mathcal I(r)$.

Theorem \ref{Minkcoef*} is proven in \cite{CSS} with the additional assumptions that $\dim N(\hat R)<d$ and that the graded $m_R$-families are filtrations. The proof uses the theory of Okounkov bodies and equation (\ref{eq10}). The theorem is proven for graded $m_R$-families, also with the assumption that $\dim N(\hat R)<d$ and using Okounkov bodies, in \cite{CRM}.

 To establish Theorem \ref{Minkcoef*} in an elementary way, we need the full generality of Theorem \ref{Theorem70}, which says that if 
 $\mathcal I(1)=\{I(1)_n\}, \,\,\mathcal I(2)=\{I(2)_n\},\,\,\ldots,\mathcal I(r)=\{I(r)_n\}$ are graded $m_R$-families, then 
 with $I(i)_n\mathcal O_{Y(i)_n}=\mathcal O_{Y_n}(-F(i)_n)$ where $Y(i)_n$ is the blowup of $I(i)_n$,  the limit
\begin{equation}\label{eqIn1}
\lim_{n_1,\ldots,n_r\rightarrow\infty} \left((\frac{-F(1)_{n_1}}{n_1})^{d_1}\cdot\ldots\cdot (\frac{-F(r)_{n_r}}{n_r})^{d_r}\right)
\end{equation}
exists.
The proof of Theorem \ref{Minkcoef*} now follows from equation (\ref{eqIn1}) and the multilinearity of intersection products.
It is shown in the proof of Theorem \ref{Minkcoef*}, given after Lemma \ref{Prop71}, that
 \begin{equation}\label{MixInt}
\begin{array}{l}
\displaystyle{\lim_{m\rightarrow\infty}} \frac{e(I(1)_{mn_1}\cdots\ I(r)_{mn_r})}{m^d} =
\lim_{m\rightarrow\infty}-\left(\left(\frac{-F(1)_{mn_1}}{m}+\frac{-F(2)_{mn_2}}{m}+\cdots+\frac{-F(r)_{mn_r}}{m}\right)^d\right)\\
= \displaystyle \sum_{d_1+\ldots+d_r=d}\frac{-d!}{d_1!\cdots d_r!} \left(\lim_{m\rightarrow\infty}\left(\left(\frac{-F(1)_{m}}{m}\right)^{d_1}\cdot\ldots\cdot
\left(\frac{-F(r)_{m}}{m}\right)^d\right)\right)n_1^{d_1}\cdots n_r^{d_r}.
\end{array}
\end{equation}
 In particular, using Theorem \ref{MixMultIdeal} for the second equality, we have that 
\begin{equation}\label{eqIn15}
e(\mathcal I(1)^{[d_1]},\ldots,\mathcal I(r)^{[d_r]})=\lim_{n\rightarrow \infty}-((\frac{-F(1)_n}{n})^{d_1}\cdot\ldots\cdot (\frac{-F(r)_n}{n})^{d_r})
=\lim_{n\rightarrow\infty}\frac{e(I(1)_n^{[d_1]},\ldots,I(r)_n^{[d_r]})}{n^d}.
 \end{equation}
 As a consequence of Theorem \ref{Minkcoef*}, we show in Theorem \ref{MinkcoefM} that for a finitely generated $R$-module $M$,
$$
\lim_{m\rightarrow\infty} \frac{e(I(1)_{mn_1}\cdot\ldots\cdot\ I(r)_{mn_r};M)}{m^d}
 $$
 is a polynomial of degree $d$ in $n_1,\ldots,n_r$ for all $n_1,\ldots,n_r\in \ZZ_{\ge 0}$.

Let $\mathcal I=\{I_n\}$ be a graded $m_R$-family. 

\begin{equation}\label{eqIn3}
\mbox{Given $c\in \RR_{>0}$, let $\mathcal I^{(c)}$ be the family $\{I_{\lceil cn\rceil}\}$,}
\end{equation}
where $\lceil cn\rceil$ is the round up of the real number $cn$. If $c=0$, then $\mathcal I^{(0)}$ is the filtration such that $(I^{(0)})_n=m_R$ for $n>0$.

If $\mathcal I(1),\ldots,\mathcal I(r)$ are graded families, then
we define the product 
\begin{equation}\label{eqIn4}
\mathcal I(1)\mathcal I(2)\cdots\mathcal I(r)=\{I(1)_nI(2)_n\cdots I(r)_n\}.
\end{equation}

Using this language and equation (\ref{eqIn15}),
the mixed multiplicities of $\mathcal I(1),\ldots,\mathcal I(r)$ are
\begin{equation}\label{eqIn16}
\begin{array}{lll}
e(\mathcal I(1)^{[d_1]},\ldots,\mathcal I(r)^{[d_r]})&=&\lim_{n\rightarrow \infty}-((\frac{-F(1)_n}{n})^{d_1}\cdot\ldots\cdot (\frac{-F(r)_n}{n})^{d_r})\\
&=&-(\mathcal I(1)^{d_1}\cdot\ldots\cdot\mathcal I(r)^{d_r}),
\end{array}
\end{equation}
and we can restate the formula of Theorem \ref{Minkcoef*} as
\begin{equation}\label{eqIn17}
e(\mathcal I(1)^{(n_1)}\mathcal I(2)^{(n_2)}\cdots\mathcal I(r)^{(n_r)})=\sum_{d_1+\cdots+d_r=d} \frac{d!}{d_1!\cdots d_r!}
e(\mathcal I(1)^{d_1},\ldots,\mathcal I(r)^{d_r})n_1^{d_1}\cdots n_r^{d_r}.
\end{equation}

\subsection{Valuations, integral closure and saturation}\label{SubSecVal}

Let $R$ be a $d$-dimensional Noetherian local ring. A valuation $v$ of $R$ is defined  in Definition VI.3.1, page 386 \cite{Bou}
Bourbaki as follows. There is a prime ideal $P$ of $R$ and a valuation $v$ of the quotient field of $R/P$ such that $v$ is nonnegative on $R/P$. Elements of $P$ have value $\infty$, which is larger than any element of the value group of $v$.
A valuation $v$ of $R$ is an $m_R$-valuation (as  defined by Rees) if $v(m_R)>0$ and $v$ is a divisorial valuation of $R$; that is, the residue field of the valuation has maximal transcendence degree $d-1$ over the residue field $R/m_R$ of $R$ (Definition 9.3.1 \cite{HS}).
The value group of a divisorial valuation $v$ is $\ZZ$ (by Theorem 9.3.2 \cite{HS} and Proposition 6.4.4 \cite{HS}), so the valuation ring $\mathcal O_v$  uniquely determines the valuation $v$.
Let $V(R)$ be the set of $m_R$-valuations of $R$.  
There is a natural 1-1 correspondence between $V(R)$ and the disjoint union of $V(\hat R/P_i)$ where $P_i$ are the minimal prime ideals of $\hat R$ such that $\dim \hat R/P_i=d$ (a proof of this is given in Lemma \ref{LemVal}).

If $R$ is an analytically unramified local domain, such as a complete local domain, then $V(R)$ is equal to the set of  valuations dominating $R$ such that the valuation ring $\mathcal O_v$ is essentially of finite type over $R$ (this follows from Lemma 4.2 \cite{R} or Theorem 9.3.2 \cite{HS}). Given $v\in V(R)$, and assuming that $R$ is an analytically unramified local domain, there exists an $m_R$-primary ideal $I$ in $R$ and an integral exceptional   divisor $E$ on the blow up of $I$ 
such that the valuation ring of $v$ is $\mathcal O_v=\mathcal O_{Y,E}$, the local ring of  $E$ in $Y$, by the analysis in \cite{R2} or Proposition 4.9 \cite{CM}.  We will write $v_E$ for the  valuation of $\mathcal O_{Y,E}$ ($v_E(f)=n$ if $f$ is in the quotient field of $R$ and $f=t^nu$ where $t$ is a generator of the maximal ideal of $\mathcal O_{Y,E}$ and $u\in \mathcal O_{Y,E}$ is a unit).

 If $R$ is a Noetherian local ring and $v\in V(R)$, then we define a graded filtration by $\mathcal I(v)=\{I(v)_n\}$ where 
 \begin{equation}\label{eqvalfin}
 I(v)_n=\{ f\in R\mid v(f)\ge n\}.
 \end{equation}
  A (real) divisorial filtration is a graded filtration $\mathcal I=\{I_n\}$ where there exist $v_1,\ldots,v_s\in V(R)$ and $a_1,\ldots,a_s\in \RR_{\ge 0}$ such that $I_n=\{f\in R\mid v_i(f)\ge na_i\mbox{ for }1\le i\le s\}$.

Define $v(\mathcal I)$ for $\mathcal I$ a graded filtration of $m_R$-primary ideals and $v\in V(R)$ by
\begin{equation}\label{ex2*}
v(\mathcal I)=\inf_{m\in \ZZ_{>0}}\frac{v(I_m)}{m}\in \RR_{\ge 0}.
\end{equation}
 It is   shown in \cite{JM} that 
$v(\mathcal I)=\displaystyle\lim_{m\rightarrow\infty}\frac{v(I_m)}{m}$.

If $I$ is an ideal in a local ring $R$, then $\overline I$ denotes its integral closure. 

Suppose that $R$ is a local ring and $\mathcal I=\{I_n\}$ is a graded  $m_R$-family. The proof in \cite[Lemma 5.6]{C8} extends immediately 
to show that the integral closure $\overline{\sum_{n\ge 0}I_nt^n}$ of $\sum_{n\ge 0}I_nt^n$ in $R[t]$ is the graded $R$-algebra 
$\overline{\sum_{n\ge 0}I_nt^n}=\sum_{n\ge 0}I^*_nt^n$,  where $\overline{\mathcal I}=\{I^*_n\}$ is the filtration
$$
I^*_n=\{f\in R\mid f^r\in \overline{I_{rn}}\mbox{ for some }r>0\}.
$$
If $\mathcal I=\{I^n\}$ is the adic-filtration of the powers of a fixed ideal $I$, then $I^*_n=\overline{I^n}$ for all $n$. However, it is possible for the integral 
closure $\overline{\mathcal I}$ of a graded $m_R$-family $\mathcal I=\{I_n\}$ to strictly contain the graded $m_R$-family $\{\overline{I_n}\}$ of integral closures of the ideals $I_n$.
A simple example where this occurs is in the example given after Definition \ref{SatDef}.


The following definition is made in Definition 3.1 \cite{BLQ} on  analytically irreducible local rings $R$, so that $V(R)$ is the set of divisorial valuations of the quotient field of $R$ which dominate $R$. We extend the definition to Noetherian  local rings.
\begin{Definition}\label{SatDef} Let $R$ be a Noetherian local ring  and $\mathcal I$ be a graded $m_R$-family. The saturation $\tilde{\mathcal I}$ of $\mathcal I$ is the filtration
$\tilde{\mathcal I}=\{\tilde I_n\}$ defined by 
$$
\tilde I_n=\{f\in m_R\mid v(f)\ge n v(\mathcal I)\mbox{ for all }v\in V(R)\}.
$$
\end{Definition}

It is shown in Lemma \ref{SatInt} that 
the graded $R$-algebra $R[\tilde{\mathcal I}]=\sum_{n\ge 0}\tilde I_nt^n$ is integrally closed in $R[t]$.
  We have inclusions 
$$
\mathcal I\subset \{\overline{I_n}\}\subset\overline{\mathcal I}\subset \tilde{\mathcal I}
$$
where it may be that all of the inclusions are proper. A simple example where the last two   inclusions are proper is the following. Let $R=k[[t]]$, where $k$ is a field, be a power series ring. Let the graded filtration $\mathcal I=\{I_n\}$ be defined by
$$
I_n=\left\{\begin{array}{ll} (t^{n+2})&\mbox{ if $n$ is odd}\\
                                          (t^{n+1})&\mbox{ if $n$ is positive and even.}
                                          \end{array}\right.
$$
 Then $\overline{\mathcal I}=\{I^*_n\}$ is the filtration 
 $I^*_n=(t^{n+1})$ for $n>0$. The only valuation in $V(R)$  is the $m_R$-adic valuation $v$, defined by $v(f)=\mbox{order}(f)$ for $f\in R$. We compute $v(\mathcal I)=1$. Thus   $\tilde{\mathcal I}$ is the $m_R$-adic filtration, $\tilde I_n=(t^n)$ for all $n\in \ZZ_{>0}$.    $I_n$ is integrally closed for all $n$, so $\mathcal I=\{\overline{I_n}\}$ and thus 
  $\{\overline{I_n}\}\ne \overline{\mathcal I}\ne \tilde{\mathcal I}$.

Let $I$ be an $m_R$-primary ideal and $\mathcal I=\{I^n\}$ be the $I$-adic filtration. Rees showed that 
there are  discrete valuations $v_1,\ldots,v_r$ dominating $R$  and $a_1,\ldots,a_r\in\ZZ_{>0}$ such that 
$$
I^*_n=\overline{I_n}=\{f\in R\mid v_i(f)\ge na_i\mbox{ for }1\le i\le r\}
$$
for all $n\ge 0$. The valuations $v_i$ are  called the Rees valuations of $I$. If $R$ is formally equidimensional, then the Rees valuations are all elements of $V(R)$ (as follows from Proposition 10.2.6 \cite{HS})
and $\tilde{\mathcal I}=\overline{\mathcal I}$ is thus a divisorial filtration.

If $R$ is not formally equidimensional, then there exist $m_R$-primary ideals $I$ of $R$ such that the filtration
 $\{\overline{I^n}\}$ is not real divisorial, which will occur if the Rees valuations of $I$ are not all $m_R$-valuations.

It is more difficult to find examples where $\tilde{\mathcal I}$ is not real divisorial on a formally equidimensional local ring. 
We give such an example in Section \ref{Notdivex},  showing that there exists a saturated filtration on a two dimensional regular local ring which is not real divisorial.

\subsection{Rees's Theorem}\label{ReesSubSec}

In this subsection we compare filtrations $\mathcal I\subset \mathcal J$ for which $e(\mathcal I)=e(\mathcal J)$.
For $m_R$-primary ideals in a formally equidimensional local ring, there is a very nice characterization of when this condition holds, discovered and proven by Rees.

\begin{Theorem}(Rees \cite{R}, Theorem 11.3.1 \cite{HS})\label{ReesThm} Suppose that $R$ is a formally equidimensional $d$-dimensional Noetherian local ring and $I\subset J$ are $m_R$-primary ideals. Then $J\subset \overline{I}$ if and only if $e(J)=e(I)$.
\end{Theorem}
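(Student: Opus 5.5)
Both directions run through integral dependence and reductions; only the converse uses the formally equidimensional hypothesis, and that is where the difficulty lies.

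\emph{Easy direction.} Suppose $J\subset\overline I$. Since $I\subset J\subset \overline I$, the ideal $J$ is integral over $I$, so $I$ is a reduction of $J$: there is an $r$ with $J^{n+r}=I^nJ^r$ for all $n\ge 0$. Hence $J^{n+r}\subset I^n\subset J^n$. This gives
$$\ell(R/J^n)\le \ell(R/I^n)\le \ell(R/J^{n+r}).$$
Multiplying by $d!/n^d$ and letting $n\to\infty$ yields $e(J)=e(I)$. This direction needs no equidimensionality.

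\emph{Converse: reductions.} Assume $e(I)=e(J)$ with $I\subset J$, and suppose $f\in J\setminus\overline I$. Then $\overline{I}\subsetneq \overline{I+(f)}\subset \overline J$. Multiplicity depends only on integral closure, since $I$ is a reduction of $\overline I$. It is monotone under inclusion of ideals, and $e(I)=e(J)$. Therefore $e(I+(f))=e(I)$ as well, so we may assume $J=I+(f)$ with $f\notin\overline I$.

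\emph{Converse: passing to a complete domain.} Pass to $\hat R$; integral closure is detected after completion, since $\overline{I\hat R}\cap R=\overline I$. By the associativity formula quoted in the proof of Theorem \ref{multlim},
$$e(I)=\sum_P \ell(\hat R_P)\,e(I\hat R/P),$$
summed over the minimal primes $P$ of $\hat R$ with $\dim\hat R/P=d$. Formal equidimensionality says every minimal prime of $\hat R$ has $\dim\hat R/P=d$. So every minimal prime appears in this sum, and each term is monotone. Equality of the sums therefore forces $e(I\hat R/P)=e(J\hat R/P)$ for every minimal prime $P$. An element is integral over an ideal if and only if it is integral modulo every minimal prime. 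We are thus reduced to the case where $R$ is a complete local domain.

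\emph{Converse: the valuative step.} On a complete domain we use Rees valuations. Since $f\notin\overline I$, there is a Rees valuation $v$ of $I$ with $v(f)<v(I)$, and $v\in V(R)$. Take $Y$ to be the normalized blowup of $I$, which is finite over the blowup because $R$ is excellent. On $Y$ write $I\mathcal O_Y=\mathcal O_Y(-F)$, with $E$ the prime divisor of $v$. Blow up further to $Y'$ so that $J$ also becomes invertible, with $J\mathcal O_{Y'}=\mathcal O_{Y'}(-G)$ and $G\le F$. The coefficient of $G$ along $E$ is strictly smaller than that of $F$. By Theorem \ref{Theorem20},
$$e(I)-e(J)=-(-F)^d+(-G)^d=\sum_{i=0}^{d-1}\bigl((F-G)\cdot(-F)^{i}\cdot(-G)^{d-1-i}\bigr).$$
The divisor $F-G$ is effective and exceptional. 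The classes $-F$ and $-G$ are nef relative to $\operatorname{Spec}R$ on the exceptional locus, since they are generated by global sections. Each term is therefore $\ge 0$. The $i=d-1$ term, $(F-G)\cdot(-F)^{d-1}$, is at least $\operatorname{coeff}_E(F-G)\cdot(E\cdot(-F)^{d-1})$. This is positive because $-F$ is ample on the blowup of $I$ and $E$ is not contracted there. This contradicts $e(I)=e(J)$.

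The main obstacle is this positivity step. It requires care with intersection theory over an arbitrary complete domain in the Thorup/Fulton--Kleiman setting, with nefness on exceptional fibres and ampleness pulled back along birational maps. Alternatively, one can cite the classical argument in Theorem 11.3.1 \cite{HS}, which proceeds by reduction to the case of an infinite residue field, minimal reductions, and comparing the Hilbert polynomials of the associated graded rings.
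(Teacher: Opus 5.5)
Your proof is correct, but it takes a different route from the paper's.

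\textbf{The paper's route.} The paper never proves Theorem \ref{ReesThm} directly. It obtains it as Corollary \ref{ReesThmpf} by applying Theorem \ref{Theorem14} to the adic filtrations $\{I^n\}\subset\{J^n\}$. This gives $e(I)=e(J)$ if and only if $v(I)=v(J)$ for all $v\in V(R)$. Formal equidimensionality is used only to place the Rees valuations of $I$ in $V(R)$, which turns that condition into $J\subset\overline I$.
\begin{itemize}
\item The easy direction passes through the intersection-theoretic Theorem \ref{Theorem313}.
\item The hard direction rests on Theorem \ref{Theorem312}. After the same reduction to the domains $\hat R/P_i$ that you make, this is a length estimate rather than a positivity argument. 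Multiplication by a power of an $f\in J$ with $v(f)<v(I)$ maps $R$ into $J^n/I^n$ with kernel inside $I(v)_{cn}$. Izumi's theorem, together with Rees's result that $\ell(R/\overline{m_R^n})$ is eventually a polynomial with leading coefficient $e(m_R)/d!$, then gives $e(I)-e(J)\ge (c\gamma)^d e(m_R)>0$.
\end{itemize}

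\textbf{Your route.} Your easy direction uses reductions and needs no intersection theory. Your hard direction writes $e(I)-e(J)=((-G)^d)-((-F)^d)$ as a sum of products of the effective exceptional divisor $F-G$ with nef classes. It then extracts the strictly positive term $(v(I)-v(J))(E\cdot(-F)^{d-1})$ from the ampleness of $-F$ on the normalized blowup of $I$. This is shorter and more geometric than the paper's argument. It uses only:
\begin{itemize}
\item Theorem \ref{Theorem20} and Lemma \ref{NefLem};
\item the projection formula from $Y'$ to $Y$, which applies because $Y'\to Y$ is an isomorphism at the generic point of $E$;
\item positivity of the top self-intersection of an ample divisor on the $(d-1)$-dimensional projective $R/m_R$-variety $E$.
\end{itemize}
All of these are available in the framework of \cite{CM}, so you do not need the fallback to \cite{HS}. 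Your use of formal equidimensionality (every minimal prime of $\hat R$ appears in the associativity formula, so integral dependence can be checked modulo each one) is the same phenomenon the paper uses (Rees valuations of $I$ lie in $V(R)$).

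\textbf{What each approach buys.} Your argument is the better one for ideals. The paper's argument is the one that survives for graded families. Your method needs a single model on which $-F$ is ample and on which the offending valuation is a divisor. For a family, a valuation with $v(\mathcal J)<v(\mathcal I)$ need not be a Rees valuation of any $I_n$. Its divisor may then be contracted on the blowup of $I_n$, and your positive term vanishes. Even when it survives, nothing keeps $(E\cdot(-F_n/n)^{d-1})$ uniformly away from zero. This is exactly the difficulty described in the introduction. Izumi's theorem supplies a lower bound that does not depend on the model.
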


Since the integral closure of $R[It]$ in $R[t]$ is $\oplus_{n\ge 0}\overline {I^n}$, $J\subset \overline I$ if and only 
$\overline{R[It]} = \overline{R[Jt]}$.

The condition that $R$ is formally equidimensional is necessary in Rees theorem. 
If $R$ is a Noetherian local ring which is not formally equidimensional, then there exist $m_R$-primary ideals $I\subset J$ such that $e(I)=e(J)$ but $R[Jt]$ is not integral over $R[It]$.  Thus $J\not \subset \overline I$.   This follows from Exercise 11.5 and Theorem 8.2.1 \cite{HS}.

We  consider the generalization of Theorem \ref{ReesThm} to graded $m_R$-families.  We say that $\mathcal I=\{I_n\}\subset \mathcal J=\{J_n\}$ if $I_n\subset J_n$ for all $n$. We observe that if $\mathcal I\subset \mathcal J$, then
\begin{equation}\label{eq80}
e(\mathcal J)\le e(\mathcal I).
\end{equation}
This follows since $I_n\subset J_n$ implies $e(J_n)\le e(I_n)$. If $\mathcal I\subset \mathcal J$ then $\overline{\mathcal I}\subset \overline{\mathcal J}$ and $\tilde{\mathcal I}\subset \tilde{\mathcal J}$.

Example \ref{ReesEx} shows that it is possible that  $\tilde{\mathcal I}=\tilde{\mathcal J}$ but $\overline{\mathcal I}$ is not equal to $\overline{\mathcal J}$, even on an analytically irreducible local ring.

The following theorem in \cite{CSS} generalizes  \cite[Proposition 11.2.1]{HS} for $m_R$-primary ideals to filtrations of $R$ by $m_R$-primary ideals. 

\begin{Theorem}\label{Theorem13}(\cite[Theorem 6.9]{CSS}, Theorem 1.4 \cite{C9}) Suppose that $R$ is a Noetherian $d$-dimensional local ring such that 
$
\dim N(\hat R)<d.
$
 Suppose that $\mathcal I\subset \mathcal J$ are graded $m_R$-filtrations  and the ring $\bigoplus_{n\ge 0}I_n$ is integral over $\bigoplus_{n\ge 0} J_n$. Then 
$$
e(\mathcal I)=e(\mathcal J).
$$
\end{Theorem}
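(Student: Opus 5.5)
The plan is to get the easy inequality from inclusion, and the reverse inequality by combining the "Volume = Multiplicity" formula (Theorem \ref{volmult}) with a finiteness argument in the Rees algebras. I read the hypothesis in the only way that makes the statement meaningful: $B=\bigoplus_{n\ge 0}J_nt^n$ is integral over $A=\bigoplus_{n\ge 0}I_nt^n$, where $A\subset B$ since $\mathcal I\subset\mathcal J$.

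\emph{Easy inequality.} Since $I_n\subset J_n$ for all $n$, (\ref{eq80}) gives $e(\mathcal J)\le e(\mathcal I)$. It remains to prove $e(\mathcal I)\le e(\mathcal J)$.

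\emph{Finite pieces of $B$.} Fix $p>0$ and let $B_p=A[J_1t,\ldots,J_pt^p]\subset B$ be the $A$-subalgebra generated by the components of $B$ in degrees $\le p$. Each $J_i$ is finitely generated and every element of $B$ is integral over $A$. Hence $B_p$ is generated over $A$ by finitely many integral elements, so it is a finite $A$-module. Choose homogeneous generators of degrees $\le c$, where $c=c(p)$. Then for every $n$,
$$
(B_p)_n=\sum_{j\le c}I_{n-j}\,(B_p)_j\subset I_{n-c}.
$$
Here I use $(B_p)_j\subset R$ and that $\mathcal I$ is a filtration, so $I_{n-j}\subset I_{n-c}$ for $j\le c$. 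This is the step where the filtration hypothesis is needed.

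\emph{Comparing the multiplicities.} Since $J_p^kt^{kp}\subset (B_p)_{kp}$, we get $J_p^k\subset I_{kp-c}$ for all $k$. Therefore
$$
e(J_p)=\lim_{k\rightarrow\infty}d!\frac{\ell(R/J_p^k)}{k^d}\ \ge\ \limsup_{k\rightarrow\infty}d!\frac{\ell(R/I_{kp-c})}{k^d}.
$$
Since $\dim N(\hat R)<d$, Theorem \ref{TheoremVolEx} shows that $d!\,\ell(R/I_n)/n^d$ converges to ${\rm vol}(\mathcal I)$. Applying this along the subsequence $n=kp-c$, and using $(kp-c)/k\to p$, the right-hand side equals $p^d\,{\rm vol}(\mathcal I)$. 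By Theorem \ref{volmult}, ${\rm vol}(\mathcal I)=e(\mathcal I)$, so
$$
\frac{e(J_p)}{p^d}\ge e(\mathcal I)\quad\mbox{for every } p.
$$
Letting $p\to\infty$ gives $e(\mathcal J)\ge e(\mathcal I)$, and hence equality.

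\emph{Main obstacle.} The delicate step is the finiteness of $B_p$ over $A$ and the resulting uniform degree shift $c$. The constant $c$ depends on $p$, but this is harmless: $p$ is held fixed while $k\to\infty$, and the shift is absorbed in the limit. Apart from that step, the argument rests on the existence of the volume as a limit (Theorem \ref{TheoremVolEx}) and on Theorem \ref{volmult}; this is exactly where the hypothesis $\dim N(\hat R)<d$ enters.
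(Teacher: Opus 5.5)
Your argument is correct, but it takes a different route from the paper. In the paper this statement is Corollary \ref{ReesInt}, and its proof is valuative. By Lemma \ref{SatInt}, $R[\tilde{\mathcal I}]$ is integrally closed in $R[t]$, so integrality of $\bigoplus J_n$ over $\bigoplus I_n$ forces $\mathcal I\subset\mathcal J\subset\tilde{\mathcal I}$. The generalized Rees theorem (Theorem \ref{Theorem14}, which rests on Theorems \ref{Theorem312} and \ref{Theorem313}) then gives $e(\mathcal I)=e(\mathcal J)$.

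That route needs neither $\dim N(\hat R)<d$ nor the filtration hypothesis, and it deliberately avoids Okounkov bodies. Yours is a short finiteness argument producing the uniform shift $J_p^k\subset I_{kp-c}$. As written, however, it imports Theorems \ref{TheoremVolEx} and \ref{volmult}, which are proved with Okounkov bodies and are exactly where $\dim N(\hat R)<d$ enters.

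That import is unnecessary. From $J_p^k\subset I_{kp-c}$ you get directly $k^de(J_p)=e(J_p^k)\ge e(I_{kp-c})$, hence
\[
\frac{e(J_p)}{p^d}\ \ge\ \left(\frac{kp-c}{kp}\right)^d\frac{e(I_{kp-c})}{(kp-c)^d}\ \longrightarrow\ e(\mathcal I)\qquad (k\rightarrow\infty)
\]
by Theorem \ref{multlim} alone. So your proof works for filtrations on any Noetherian local ring, with no volumes.

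The filtration hypothesis can be dropped too. For a graded family, your homogeneous generators of degree $\le c$ give $I_1^c(B_p)_{kp}\subset I_{kp}$, hence $I_1^cJ_p^k\subset I_{kp}$. The coefficient of $k^d$ in the polynomial $e(I_1^cJ_p^k)$ is $e(J_p)$, so again $e(J_p)\ge p^d e(\mathcal I)$. With these tweaks your approach matches the paper's generality and bypasses Section \ref{ReesSec} entirely. The paper's route buys something else: it identifies $\tilde{\mathcal I}$ as the largest family containing $\mathcal I$ with the same multiplicity.
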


However, the converse of Theorem \ref{Theorem13} does not hold. 
The following example shows that even on an analytically irreducible local ring, we can find $\mathcal I\subset \mathcal J$ such that $e(\mathcal I)=e(\mathcal J)$ but 
$R[\overline{\mathcal I}]\ne R[\overline{\mathcal J}]$.

\begin{Example}\label{ReesEx} In the power series ring $R=k[[x]]$ over a field $k$, let $\mathcal I=\{I_n\}$ 
where $I_n=(x^{n+1})$ and $\mathcal J=\{J_n\}$ where $J_n=(x^n)$.  We have that $\overline{\mathcal I}=\mathcal I$ and $\overline{\mathcal J}=\mathcal J$. Thus 
$\overline{\mathcal I}$ is not equal to $\overline{\mathcal J}$ although $\mathcal I$ and $\mathcal J$ have the same multiplicity
 $e(\mathcal I)=e(\mathcal J)=1$.   In the example, $V(R)$ consists of only the $m_R$-adic valuation $v$ and  $v(\mathcal I)=v(\mathcal J)$  so $\tilde{\mathcal I}=\tilde{\mathcal J}$.

\end{Example}

There is one situation where the converse to Theorem \ref{Theorem13} holds. A graded $m_R$-filtration $\mathcal I$ is bounded if its integral closure $\overline{\mathcal I}$ is a divisorial filtration. The $I$-adic filtration of an $m_R$-primary ideal $I$ on a formally equidimensional local ring is an example of a  bounded filtration.

\begin{Theorem}\label{NewRBT}(Theorem 1.2 \cite{C8}) Suppose that $R$ is an excellent local domain or an analytically irreducible local domain, $\mathcal I$ is a   bounded $m_R$-filtration and $\mathcal J$ is an arbitrary $m_R$-filtration such that $\mathcal I\subset \mathcal J$. Then the following are equivalent
\begin{enumerate}
\item[1)]  $e(\mathcal I)=e(\mathcal J)$.
\item[2)]  $\overline{\mathcal I}=\overline{\mathcal J}$; that is, 
there is equality of  integral closures  
$
R[\overline{\mathcal I}]=R[\overline{\mathcal J}].
$
\end{enumerate}
 \end{Theorem}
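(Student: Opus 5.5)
The implication $2)\Rightarrow 1)$ should be formal, and I would dispose of it first. An excellent local domain, or an analytically irreducible one, is analytically unramified, so $\dim N(\hat R)<d$ and Theorem \ref{Theorem13} applies to pairs of filtrations one of whose Rees algebras is integral over the other. The containments $\mathcal I\subset\overline{\mathcal I}$ and $\mathcal J\subset\overline{\mathcal J}$ are integral extensions of Rees algebras. Theorem \ref{Theorem13} therefore gives $e(\mathcal I)=e(\overline{\mathcal I})$ and $e(\mathcal J)=e(\overline{\mathcal J})$, so $\overline{\mathcal I}=\overline{\mathcal J}$ forces $e(\mathcal I)=e(\mathcal J)$.

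For $1)\Rightarrow 2)$ I would argue by contradiction. Since $\mathcal I$ is bounded, $\overline{\mathcal I}$ is divisorial:
$$
I^*_n=\{f\in R\mid v_i(f)\ge na_i,\ 1\le i\le s\}
$$
for some $v_i\in V(R)$ and $a_i>0$. As $\mathcal I\subset\mathcal J$ implies $\overline{\mathcal I}\subset\overline{\mathcal J}$, it suffices to show $J_n\subset I^*_n$ for all $n$. Indeed, $\overline{\mathcal J}$ is then contained in the integral closure of $\overline{\mathcal I}$, which is $\overline{\mathcal I}$ itself, because a divisorial filtration has integrally closed Rees algebra.

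Suppose instead that some $f\in J_m$ has $v_1(f)<ma_1$. Let $\mathcal K=\{K_n\}$ be the filtration generated by $\overline{\mathcal I}$ and $ft^m$, namely
$$
K_n=\sum_{k\ge 0} I^*_{n-km}f^k .
$$
Then $\mathcal I\subset\overline{\mathcal I}\subset\mathcal K\subset\overline{\mathcal J}$. By (\ref{eq80}) and the first paragraph,
$$
e(\mathcal I)=e(\overline{\mathcal I})\ge e(\mathcal K)\ge e(\overline{\mathcal J})=e(\mathcal J).
$$
So it suffices to prove the strict inequality $e(\mathcal K)<e(\overline{\mathcal I})$.

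To get strictness I would pass to geometry via (\ref{eqIn11}) and Theorem \ref{Theorem20}. Reducing to the completion, or to $\hat R/P$ for the minimal primes via the associativity formula, I may assume $R$ is analytically irreducible. Choose a birational projective model $Y$, the blowup of a suitable $m_R$-primary ideal, on which all $v_i$ are prime exceptional divisors $E_i$. In the limit, $e(\overline{\mathcal I})=-(-D)^d$ with $D=\sum a_iE_i$; this $-D$ is nef relative to $\mathrm{Spec}(R)$, since the $\overline{I^*_{n}}$ generate the limit. Every element of $K_n$ satisfies
$$
v_1\ge n a_1-\epsilon n
$$
for some fixed $\epsilon>0$ coming from $v_1(f)<ma_1$, while the other conditions are unchanged. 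So $\mathcal K$ is dominated by a filtration with divisor $D'\le D-\epsilon' E_1$. I would then estimate $e(\mathcal K)$ from above by the multiplicity of the corresponding divisorial filtration $D_t=D-tE_1$ with $t>0$ small. The key computation is a first-order variation of the self-intersection of nef divisors: for $t>0$ small,
$$
-(-D_t)^d\le -(-D)^d-d\,t\,\bigl((-D)^{d-1}\cdot E_1\bigr)+O(t^2).
$$
The coefficient $((-D)^{d-1}\cdot E_1)$ should be strictly positive because $E_1$ is a Rees-type divisor of $\overline{\mathcal I}$, so $-D$ restricted to $E_1$ is big.

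I expect this strict-monotonicity step to be the main obstacle. One difficulty is that $\overline{\mathcal I}$ need not be finitely generated, so $-D$ is only a limit of nef divisors, and multiplicities of non-nef divisors must be controlled by their nef parts, Zariski-decomposition style. The other is proving that the relevant intersection number with $E_1$ is nonzero. I would first try to settle this when $\overline{\mathcal I}$ is Noetherian, where a single model suffices, and then approximate by truncations, keeping a uniform lower bound on $((-D_n)^{d-1}\cdot E_1)$.
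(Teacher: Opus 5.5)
Your argument for $2)\Rightarrow 1)$ is fine: both classes of rings are analytically unramified, so Theorem \ref{Theorem13} applies. The reduction of $1)\Rightarrow 2)$ to showing $J_n\subset I^*_n$ is also fine.

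\textbf{The gap.} The whole content of $1)\Rightarrow 2)$ is the strict inequality $e(\mathcal K)<e(\overline{\mathcal I})$. You leave it open, and the mechanism you sketch runs the wrong way.
\begin{itemize}
\item The estimate $v_1(K_n)\ge n(a_1-\epsilon)$ is a \emph{lower} bound on values. It places $\mathcal K$ \emph{inside} the divisorial filtration $\mathcal D_t$ of $D_t=D-tE_1$. By (\ref{eq80}) this gives $e(\mathcal K)\ge e(\mathcal D_t)$, which is a lower bound on $e(\mathcal K)$ and useless for strictness.
\item The claim that ``the other conditions are unchanged'' is false: $f$ may violate several of the inequalities $v_i(f)\ge ma_i$.
\item What genuinely lowers $e(\mathcal K)$ is the upper bound $v_1(\mathcal K)\le v_1(f)/m<a_1$, which comes from $f^k\in K_{km}$. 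But turning a drop in one valuation into a strict drop in multiplicity is exactly where you would need $\bigl((-D)^{d-1}\cdot E_1\bigr)>0$, uniformly along a non-Noetherian family. You give no argument for that.
\item $-\bigl((-D_t)^d\bigr)$ is not the multiplicity of $\mathcal D_t$ once $-D_t$ fails to be nef, which is the typical case.
\end{itemize}

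\textbf{The missing idea.} The strict inequality needs no geometry; the paper gets it from a length count (Theorem \ref{Theorem312}).
\begin{enumerate}
\item Since $a_1\le v_1(\overline{\mathcal I})=v_1(\mathcal I)$ (see the proof of Lemma \ref{DivSat}), your $f\in J_m$ already gives $v_1(\mathcal J)<v_1(\mathcal I)$.
\item Pass to $\hat R/P$ using the associativity formula and Lemma \ref{LemVal}. The map $g\mapsto f^{kn}g$ sends $R$ to $J_{mk}^n/I_{mk}^n$, with kernel inside $I(v_1)_{\lceil ckn\rceil}$, where $c=mv_1(\mathcal I)-v_1(f)>0$.
\item Izumi's theorem gives $I(v_1)_t\subset\overline{m_R^{\lfloor\gamma t\rfloor}}$ for some $\gamma>0$.
\item Rees's Hilbert polynomial for $\ell(R/\overline{m_R^n})$ then yields $e(I_{mk})-e(J_{mk})\ge(c\gamma k)^de(m_R)$ for all $k$. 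Hence $e(\mathcal J)<e(\mathcal I)$.
\end{enumerate}
Consequently, under 1) one has $v_i(\mathcal J)=v_i(\mathcal I)\ge a_i$ for every $i$, so $J_n\subset I^*_n$ directly. This needs no auxiliary $\mathcal K$, no models, and no positivity of restricted volumes. The paper packages the argument as Theorem \ref{Theorem14} together with Lemma \ref{DivSat} and Corollary \ref{ReesDiv}.
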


In  \cite{BLQ}, it is shown that for graded $m_R$-filtrations $\mathcal I\subset \mathcal J$ on an analytically irreducible local ring,
$e(\mathcal I)=e(\mathcal J)$ if and only if $\tilde{\mathcal I}=\tilde{\mathcal J}$,
generalizing  Rees's Theorem in this situation.  We generalize their result to graded $m_R$-families on an arbitrary Noetherian local ring to give the following general theorem.

\begin{Theorem}\label{Theorem14*} (proven for the essential case of analytically irreducible local rings in \cite{BLQ}) Suppose that $R$ is a Noetherian local ring and $\mathcal I\subset \mathcal J$ are graded $m_R$-families. Then $e(\mathcal I)=e(\mathcal J)$ if and only if $\tilde{\mathcal I}=\tilde{\mathcal J}$. In particular, $\tilde{\mathcal I}$ is the largest $m_R$-filtration $\mathcal J$ such that $\mathcal I\subset \mathcal J$ and $e(\mathcal J)=e(\mathcal I)$.
\end{Theorem}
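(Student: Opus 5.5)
The plan is to reduce to the analytically irreducible case treated in \cite{BLQ}, passing through the completion and the minimal primes of maximal dimension. The bridge is the associativity formula used in the proof of Theorem \ref{multlim}. For a graded $m_R$-family $\mathcal I$,
$$
e(\mathcal I)=\sum_{P}\ell_{\hat R_P}(\hat R_P)\, e(\mathcal I(\hat R/P)),
$$
where the sum is over the minimal primes $P$ of $\hat R$ with $\dim \hat R/P=d$. Each $\hat R/P$ is a complete local domain, hence analytically irreducible. On the valuation side, Lemma \ref{LemVal} gives $V(R)=\coprod_P V(\hat R/P)$. For $v\in V(\hat R/P)$ we have $v(I_n)=v(I_n(\hat R/P))$, so $v(\mathcal I)=v(\mathcal I(\hat R/P))$.

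\textbf{Reduction of the saturation.} From these identifications,
$$
\tilde I_n=\{f\in m_R \mid \bar f\in \widetilde{\mathcal I(\hat R/P)}_n \text{ for all } P\},
$$
where $\bar f$ denotes the image of $f$ in $\hat R/P$. One direction is immediate: if $\widetilde{\mathcal I(\hat R/P)}=\widetilde{\mathcal J(\hat R/P)}$ for all $P$, then $\tilde{\mathcal I}=\tilde{\mathcal J}$. For the converse, assume $\tilde{\mathcal I}=\tilde{\mathcal J}$. I want $v(\mathcal I)=v(\mathcal J)$ for every $v\in V(R)$, and then the same equality for the corresponding valuations of $\hat R/P$. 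Since $\mathcal I\subset\mathcal J$ we already have $v(\mathcal J)\le v(\mathcal I)$. It therefore suffices to show $v(\mathcal I)=v(\tilde{\mathcal I})$, i.e.\ $v(\tilde{\mathcal I})\ge v(\mathcal I)$, which holds by definition of $\tilde I_n$. Then
$$
v(\mathcal I)=v(\tilde{\mathcal I})=v(\tilde{\mathcal J})=v(\mathcal J).
$$

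\textbf{The implication $e(\mathcal I)=e(\mathcal J)\Rightarrow\tilde{\mathcal I}=\tilde{\mathcal J}$.} Inequality (\ref{eq80}) applied on each $\hat R/P$, together with the positivity of the coefficients $\ell_{\hat R_P}(\hat R_P)$, shows that $e(\mathcal I)=e(\mathcal J)$ holds if and only if
$$
e(\mathcal I(\hat R/P))=e(\mathcal J(\hat R/P)) \quad\text{for all } P.
$$
By \cite{BLQ}, each of these equalities gives equal saturations on $\hat R/P$. Equal saturations give equal $v(\cdot)$ for all $v\in V(\hat R/P)$, hence for all of $V(R)$, hence $\tilde{\mathcal I}=\tilde{\mathcal J}$.

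\textbf{The implication $\tilde{\mathcal I}=\tilde{\mathcal J}\Rightarrow e(\mathcal I)=e(\mathcal J)$.} By the reduction above, $v(\mathcal I)=v(\mathcal J)$ for all $v$, and so the saturations of $\mathcal I(\hat R/P)$ and $\mathcal J(\hat R/P)$ coincide for each $P$. Applying \cite{BLQ} on each $\hat R/P$ gives equality of the multiplicities there, and the associativity formula then gives $e(\mathcal I)=e(\mathcal J)$.

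\textbf{Maximality of $\tilde{\mathcal I}$.} Since $\mathcal I\subset\tilde{\mathcal I}$ and the two have the same saturation, the equivalence gives $e(\tilde{\mathcal I})=e(\mathcal I)$. Conversely, if $\mathcal I\subset\mathcal J$ and $e(\mathcal J)=e(\mathcal I)$, then $\mathcal J\subset\tilde{\mathcal J}=\tilde{\mathcal I}$. Two facts are needed here: $\tilde I_n\subset m_R$ for $n>0$, and $\tilde{\mathcal I}$ is an $m_R$-filtration.

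\textbf{Main obstacle.} The hard step is applying \cite{BLQ}, which is stated for filtrations, to graded families. First I would check whether their argument uses $I_{n+1}\subset I_n$. If it does, I would instead replace $\mathcal I$ by the filtration
$$
I'_n=\sum_{m\ge n} I_m .
$$
One then has to show that this replacement preserves both $e$ and all the values $v(\cdot)$. That is plausible but not automatic, because $e(I_n)$ need not be monotone in $n$ for a family. Alternatively, I would redo the \cite{BLQ} proof with the intersection-theoretic limit (\ref{eqIn11}), using that the limit $-((-F_n/n)^d)$ is determined by the valuative data $v(\mathcal I)$ via a Minkowski-type (Teissier) inequality for mixed multiplicities.
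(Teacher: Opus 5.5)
Your reduction to the complete domains $\hat R/P_i$ is correct. It is essentially the reduction the paper performs in the last paragraphs of the proofs of Theorems \ref{Theorem312} and \ref{Theorem313}: termwise use of (\ref{eq80}), positivity of the coefficients $\ell_{\hat R_{P_i}}(\hat R_{P_i})$, Lemma \ref{LemVal}, and $v(\tilde{\mathcal I})=v(\mathcal I)$.

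\textbf{The gap.} The entire content of the theorem, namely the analytically irreducible case for graded \emph{families}, is handed off to \cite{BLQ}. That reference treats filtrations and is phrased in terms of $\mbox{vol}$, so you would also need Theorem \ref{volmult} on each $\hat R/P_i$. Your repair (a) does not close this. For $I'_n=\sum_{m\ge n}I_m$ you do get $v(\mathcal I')=v(\mathcal I)$ and $e(\mathcal I')\le e(\mathcal I)$ for free. But the inequality you need, $e(\mathcal I)\le e(\mathcal I')$, is exactly the implication ``$v(\mathcal J)\le v(\mathcal I)$ for all $v\in V(R)$ implies $e(\mathcal J)\le e(\mathcal I)$'' applied to the non-filtration family $\mathcal I$. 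That is the family case you were trying to import, so the argument is circular. The filtration version applied to $\mathcal I'\subset\tilde{\mathcal I}$ tells you nothing about $\mathcal I$ itself. Repair (b) names the wrong mechanism: no Teissier-type inequality enters.

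\textbf{What is missing.} You need the two direct arguments the paper gives on an analytically irreducible ring. Neither uses $I_{n+1}\subset I_n$ or volumes.

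For $e(\mathcal I)=e(\mathcal J)\Rightarrow v(\mathcal I)=v(\mathcal J)$ (Theorem \ref{Theorem312}):
\begin{enumerate}
\item[1)] Suppose $v(\mathcal J)<v(\mathcal I)$. Take $f\in J_l$ with $v(f)=k<lv(\mathcal I)$.
\item[2)] Multiplication by $f^{mn}$ embeds $R/\mbox{Ker}\,\phi_{m,n}$ into $J_{lm}^n/I_{lm}^n$, and the kernel lies in $I(v)_{cmn}$ with $c=lv(\mathcal I)-k>0$.
\item[3)] Izumi's lemma gives $I(v)_t\subset\overline{m_R^{\lfloor t\gamma\rfloor}}$.
\item[4)] Rees's polynomial for $\ell(R/\overline{m_R^n})$ then gives $e(I_{lm})-e(J_{lm})\ge(c\gamma m)^de(m_R)$, so $e(\mathcal I)>e(\mathcal J)$.
\end{enumerate}

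For the converse (Theorem \ref{Theorem313}), work on normalized blowups $X_m$ on which $I_m$ and $J_m$ become invertible:
\begin{enumerate}
\item[1)] Use $e(\mathcal J)=\inf-((-G_{n_1}/n_1)\cdot\ldots\cdot(-G_{n_d}/n_d))$, which comes from Theorem \ref{Theorem70}.
\item[2)] Starting from $-((-F_{m_0}/m_0)^d)<e(\mathcal I)+\epsilon$, replace the factors by $-G_{m_i}/m_i$ one at a time, with $m_{i-1}\mid m_i$.
\item[3)] At each step, the projection formula pushes $F_{m_i}-G_{m_i}$ down to the finitely many exceptional primes $E$ of $X_{m_{i-1}}$. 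There $v_E(J_{m_i})/m_i\to v_E(\mathcal J)\le v_E(\mathcal I)$.
\item[4)] Lemma \ref{Lemma4}, via nefness, controls the remaining factors, giving $e(\mathcal J)\le e(\mathcal I)+(d+1)\epsilon$.
\end{enumerate}

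Your reduction would close if you verified that the \cite{BLQ} arguments go through verbatim for families, as the paper's adaptations show they can, and added Theorem \ref{volmult}. But that verification is the actual work, and it rests on the Okounkov-body results that the paper's proof is designed to avoid.
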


Theorems \ref{ReesThm}, \ref{Theorem13} and \ref{NewRBT} are deduced as corollaries of Theorem \ref{Theorem14*} in Corollary \ref{ReesThmpf}, Corollary \ref{ReesInt} and  \ref{ReesDiv} respectively.

In \cite{BLQ}, they prove Theorem \ref{Theorem14*} for  volume $\mbox{vol}(\mathcal I)$ instead of for our definition of multiplicity $e(\mathcal I)$. 
 Since they are in an analytically irreducible local ring,  the volume = multiplicity formula
$$
\mbox{\rm vol}(\mathcal I)=\lim_{p\rightarrow\infty}\frac{e(I_p)}{p^d}
$$
of Theorem \ref{volmult} holds, so their theorem implies Theorem \ref{Theorem14*} for analytically irreducible local rings. 

We modify the proof in \cite{BLQ} for analytically irreducible local rings to give a proof of Theorem \ref{Theorem14*} that is independent of the theory of volume and of Okounkov bodies. We accomplish this in Section \ref{ReesSec}.  There are three results going into the proof of Theorem \ref{Theorem14*}.  
The proofs of Lemma \ref{Lemma21} and Theorem \ref{Theorem312} require some alteration from the proofs in \cite{BLQ} to reflect the difference between the definitions of volume and multiplicity. 
 Their proof of Theorem \ref{Theorem313}  uses the volume = multiplicity formula, Theorem \ref{volmult}, to work with our definition of multiplicity $e(\mathcal I)$ instead of using $\mbox{vol}(\mathcal I)$. As such it is independent of results on volume and Okounkov bodies.  The essential difficulty in the proof is to get around the fact that given a finite set of valuations $w_1,\ldots, w_s$ in $V(R)$, we can find an $n$ such that the $\frac{w_i(I_n)}{n}$ are very close to $w_i(\mathcal I)$, but we may not be able to find an $n$ such that $v_E(I_n)$ is close to $v_E(\mathcal I)$ for all $E$ which are exceptional divisors on the blowup of $I_n$.

\subsection{Minkowski inequalities and equality}\label{SubsecMink}


We extend the Minkowski inequalities to  graded $m_R$-families on arbitrary Noetherian local rings. The proof of the following theorem is given in Section \ref{SecMink}. 

\begin{Theorem}\label{MinkIneq*}(Minkowski Inequalities)  Suppose that $R$ is a Noetherian $d$-dimensional  local ring and  $\mathcal I=\{I_n\}$ and $\mathcal J=\{J_n\}$ are graded $m_R$-families. Let $e_i=e(\mathcal I^{[d-i]},\mathcal J^{[i]})$ for $0\le i\le d$.
Then 
\begin{enumerate}
\item[1)] $e_i^2\le e_{i-1}e_{i+1}$  for $1\le i\le d-1$,
\item[2)] $e_ie_{d-i}\le e_0e_d$ for $0\le i\le d$,
\item[3)]  $e_i^d\le d_0^{d-i}e_d^i$ for $0\le i\le d$,
\item[4)] $e(\mathcal I\mathcal J)^{\frac{1}{d}}\le e_0^{\frac{1}{d}}+e_d^{\frac{1}{d}}$
where $\mathcal I\mathcal J=\{I_nJ_n\}$.
\end{enumerate}
\end{Theorem}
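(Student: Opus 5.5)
The plan is to deduce all four inequalities from the classical Minkowski inequalities for mixed multiplicities of pairs of $m_R$-primary ideals, and then pass to the limit using equation (\ref{eqIn15}). For $m_R$-primary ideals $I,J$ in an arbitrary $d$-dimensional Noetherian local ring, the inequalities 1)--4) for $e(I^{[d-i]},J^{[i]})$ are known (Teissier, Rees--Sharp; see Corollary 17.7.3 and Theorem 17.7.2 of \cite{HS}). If one prefers a self-contained route, they can be derived by reducing via the associativity formula to complete local domains $\hat R/P$ with $\dim \hat R/P=d$, and there proving 1) by Teissier's reduction to dimension two (general hyperplane sections), where the Hodge-index type inequality $e(I^{[1]},J^{[1]})^2\le e(I)e(J)$ holds.

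\textbf{Step 1.} For each $n$ set $e_i(n)=e(I_n^{[d-i]},J_n^{[i]})$. By the ideal case, $e_i(n)^2\le e_{i-1}(n)e_{i+1}(n)$ for $1\le i\le d-1$. Since $I_nJ_n$ is the $n$-th member of $\mathcal I\mathcal J$, we also have $e(I_nJ_n)^{1/d}\le e_0(n)^{1/d}+e_d(n)^{1/d}$.

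\textbf{Step 2.} Divide by the appropriate powers of $n$: divide 1) by $n^{2d}$ and 4) by $n$ after taking $d$-th roots. By (\ref{eqIn15}), $e_i(n)/n^d\to e_i$. By Theorem \ref{multlim} applied to $\mathcal I\mathcal J$, which is a graded $m_R$-family, $e(I_nJ_n)/n^d\to e(\mathcal I\mathcal J)$. Non-strict inequalities are preserved under limits, which gives 1) and 4).

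\textbf{Step 3.} Derive 2) and 3) from 1) by the standard log-concavity argument, carried out directly on the limits. If all $e_i>0$ (this holds, since $m_R^{sn}\subset I_n$ gives lower bounds, although positivity is not strictly needed), then 1) says the sequence $\log e_i$ is concave. Hence $e_i^d\le e_0^{d-i}e_d^i$, which is 3), and $e_ie_{d-i}\le e_0e_d$, which is 2). Alternatively, 2) and 3) can also be obtained directly at finite level $n$ and passed to the limit, which avoids any issue with zeros. I would take this alternative route to sidestep positivity entirely.

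The only real obstacle is the input for Step 1 in full generality, namely arbitrary Noetherian $R$, possibly not formally equidimensional. I would rely on the fact that \cite{HS} establishes the ideal-case Minkowski inequalities for any Noetherian local ring of dimension $d$. The mixed multiplicities are defined there exactly as in Theorem \ref{MixMultIdeal} and agree with the intersection numbers. The convergence statement (\ref{eqIn15}) then makes the limit passage immediate.
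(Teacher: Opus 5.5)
Your proposal is correct and is essentially the paper's proof: apply the ideal-case Minkowski inequalities (Theorem 17.7.2 and Corollary 17.7.3 of \cite{HS}) to $I_n,J_n$ at each level $n$, normalize by the appropriate powers of $n$, and pass to the limit using Theorem \ref{multlim} for 4) and (\ref{eqIn15}) for 1)--3). Two side remarks in your Step 3 are wrong but unused: the $e_i$ need not be positive ($m_R^{sn}\subset I_n$ only gives the upper bound $e(I_n)\le (sn)^de(m_R)$, and $I_n=m_R$ for $n>0$ gives $e(\mathcal I)=0$), and 1) expresses log-convexity rather than log-concavity of $i\mapsto e_i$; your chosen finite-level route for 2) and 3) needs neither fact.
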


The Minkowski inequalities were formulated and proven for  $m_R$-primary ideals $I$ and $J$ (so that $\mathcal I=\{I^n\}$ and $\mathcal J=\{J^n\}$ are adic filtrations) in a Noetherian local ring $R$ by Teissier \cite{T1}, \cite{T2} and  Rees and Sharp \cite{RS}.

The fourth inequality  4)  was proven for filtrations  of $R$ by $m_R$-primary ideals in a regular local ring with algebraically closed residue field by Musta\c{t}\u{a} (\cite[Corollary 1.9]{Mus}) and more recently by Kaveh and Khovanskii (\cite[Corollary 7.14]{KK1}) with the same assumptions.
Theorem \ref{MinkIneq*} is proven for graded $m_R$-filtrations on a local ring $R$ with $\dim N(\hat R)<d$ in Theorem 6.3 \cite{CSS} and in Theorem 4.3 \cite{CRM} for graded $m_R$-families on a local ring $R$ with $\dim N(\hat R)<d$.

Teissier \cite{T3} (for Cohen-Macaulay normal two-dimensional complex analytic $R$), Rees and Sharp \cite{RS} (in dimension 2) and Katz \cite{Ka} (in complete generality) have proven that if $R$ is a $d$-dimensional formally equidimensional Noetherian local ring with $d\ge 2$ and $I$ and $J$ are $m_R$-primary ideals such that the Minkowski equality
$$
e((IJ))^{\frac{1}{d}}= e( I)^{\frac{1}{d}}+e(J)^{\frac{1}{d}}
$$
holds,
 then there exist positive integers $r$ and $s$ such that the integral closures 
 $\overline{I^r}$ and $\overline{J^s}$ of the ideals $I^r$ and $J^s$ are equal, which is equivalent to the statement that the $R$-algebras $\bigoplus_{n\ge 0}I^{rn}$ and $\bigoplus_{n\ge 0}J^{sn}$ have the same integral closure.

 The Teissier-Rees-Sharp-Katz theorem is not true for graded $m_R$-families, even in a regular local ring, as is shown in a simple example in  Remark 6.4 \cite{CSS}, which we reproduce here. Let $k$ be a field and $R=k[[x_1,\ldots,x_n]]$ be the power series ring. Let $\mathcal I=\{I_n\}$ where $I_n=m_R^{n+1}$ and $\mathcal J=\{J_n\}$ where $J_n=m_R^{n}$. Then 
 $$
 e(\mathcal I\mathcal J)^{\frac{1}{d}}=e(\mathcal I)^{\frac{1}{d}}+e(\mathcal J)^{\frac{1}{d}} 
 $$
 but $\overline{\mathcal I}\ne \overline{\mathcal J}$. 
 
 However, this theorem is true for $\QQ$-bounded $m_R$-filtrations $(\overline{\mathcal I}$ is a $\QQ$-divisorial $m_R$-filtration) as shown in the following theorem.
 
 \begin{Theorem}\label{BoundMink}(Proven in Theorem 1.4 \cite{C8} when $R$ is analytically irreducible  or an excellent local domain)
  Suppose that $R$ is a $d$-dimensional Noetherian  local ring of dimension $d\ge 2$ and $\mathcal I$ and $\mathcal J$ are $\QQ$-bounded $m_R$-filtrations.
Then the  Minkowski equality
$$
e(\mathcal I\mathcal J)^{\frac{1}{d}}=e(\mathcal I)^{\frac{1}{d}}+e(\mathcal J)^{\frac{1}{d}}
$$
holds if and only if there exist positive integers a,b such that there is equality of integral closures
$\overline{\mathcal I^{(a)}}=\overline{\mathcal J^{(b)}}$.
\end{Theorem}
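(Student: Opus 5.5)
The plan is to reduce to the analytically irreducible case, where the statement is Theorem 1.4 \cite{C8}, using the associativity formula (Corollary \ref{Cor73}) applied to $\hat R$. We may replace $R$ by $\hat R$: multiplicities are unchanged, and by Lemma \ref{LemVal} the set $V(R)$ is identified with the disjoint union of the $V(\hat R/P)$. Here $P$ runs over the set $\Lambda$ of minimal primes of $\hat R$ with $\dim \hat R/P=d$.

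\emph{Step 1: restriction to components.} The first thing to check is that $\QQ$-boundedness passes to each $\mathcal I(\hat R/P)$. The filtration $\overline{\mathcal I}$ is defined by finitely many valuations in $V(R)$ with rational weights, and each such valuation lives on exactly one $\hat R/P$. I would show that the integral closure of $\mathcal I(\hat R/P)$ is the divisorial filtration on $\hat R/P$ given by those valuations which live on $\hat R/P$. One inclusion holds because elements of $\overline{\mathcal I}$ map into the integral closure of the image. For the other, I would use that $v(\mathcal I(\hat R/P))=v(\mathcal I)$ for $v\in V(\hat R/P)$, together with the saturation description: Theorem \ref{Theorem14*} and Lemma \ref{SatInt}, with $\tilde{\mathcal I}=\overline{\mathcal I}$ when $\overline{\mathcal I}$ is divisorial. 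I expect this bookkeeping, i.e.\ showing that integral closure and saturation commute with passing to $\hat R/P$ for bounded filtrations, to be the main technical obstacle.

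\emph{Step 2: splitting the Minkowski equality.} Put $\lambda_P=\ell(\hat R_P)$, $a_P=\lambda_Pe(\mathcal I(\hat R/P))$, $b_P=\lambda_Pe(\mathcal J(\hat R/P))$ and $c_P=\lambda_Pe(\mathcal I\mathcal J(\hat R/P))$. By Corollary \ref{Cor73} we have $e(\mathcal I)=\sum a_P$, and similarly for $\mathcal J$ and $\mathcal I\mathcal J$. Theorem \ref{MinkIneq*} 4), applied on each $\hat R/P$, gives $c_P^{1/d}\le a_P^{1/d}+b_P^{1/d}$. The classical Minkowski inequality for the $\ell^d$-norm then gives
$$
e(\mathcal I\mathcal J)^{\frac1d}\le\Big(\sum_P\big(a_P^{\frac1d}+b_P^{\frac1d}\big)^d\Big)^{\frac1d}\le e(\mathcal I)^{\frac1d}+e(\mathcal J)^{\frac1d}.
$$
Since $d\ge2$, equality at the right forces the vectors $(a_P^{1/d})$ and $(b_P^{1/d})$ to be proportional, i.e.\ $e(\mathcal I(\hat R/P))/e(\mathcal J(\hat R/P))=t$ is independent of $P$. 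Equality at the left forces the Minkowski equality on each component $\hat R/P$.

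\emph{Step 3: gluing.} By Theorem 1.4 \cite{C8} on each analytically irreducible $\hat R/P$, there are $a_P,b_P$ with $\overline{\mathcal I(\hat R/P)^{(a_P)}}=\overline{\mathcal J(\hat R/P)^{(b_P)}}$. Taking multiplicities gives $a_P^d\,e(\mathcal I(\hat R/P))=b_P^d\,e(\mathcal J(\hat R/P))$, so $b_P/a_P=t^{1/d}$, a rational number independent of $P$. Rescaling (using $v(\mathcal I^{(c)})=c\,v(\mathcal I)$), we may take a single pair $a,b$ that works for all $P$. Thus $v(\mathcal I^{(a)})=v(\mathcal J^{(b)})$ for all $v\in V(R)$. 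Since both closures are divisorial with respect to valuations in $V(R)$ (Step 1), they coincide.

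\emph{Converse.} If $\overline{\mathcal I^{(a)}}=\overline{\mathcal J^{(b)}}$, then $e(\mathcal I^{(a)})=e(\mathcal J^{(b)})$ by Theorem \ref{Theorem14*}. Moreover the mixed multiplicities computed via (\ref{eqIn16}) satisfy $e_i=a^{-(d-i)}b^{-i}e(\mathcal K)$ for $\mathcal K$ the common closure. Substituting into (\ref{eqIn17}) yields the equality directly.
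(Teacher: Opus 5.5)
Your Step 2 is correct, and the $\ell^d$-Minkowski splitting is a nice substitute for the mixed-multiplicity analysis the paper uses. The gap is Step 1, and it is more than bookkeeping: the claim you plan to prove there is false. So Theorem 1.4 of \cite{C8} cannot be invoked for $\mathcal I(\hat R/P)$ and $\mathcal J(\hat R/P)$ in Step 3. The reason is that valuations living on the other components constrain the image in $\hat R/P$ along the intersection of the components.

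Here is a counterexample. Take $R=k[[x,y,z]]/(xy)$, let $v_1,v_2$ be the order valuations of $R/(x)\cong k[[y,z]]$ and $R/(y)\cong k[[x,z]]$, and let $I_n=\{f\in R\mid v_1(f)\ge n,\ v_2(f)\ge 2n\}$. This filtration is $\QQ$-divisorial, so $\overline{\mathcal I}=\mathcal I$ by Lemma \ref{DivSat}. Writing $f=f_0(z)+xf_1(x,z)+yf_2(y,z)$, one gets $I_n(R/(x))=(z^{2n})+y(y,z)^{n-1}$. For the monomial valuation $w$ with $w(y)=2$, $w(z)=1$, one has $w(I_{rn}(R/(x)))=rn+1>w(z^{rn})$ for every $r$. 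Hence $z^n\notin\overline{\mathcal I(R/(x))}_n$, although $z^n$ lies in the filtration $\{(y,z)^n\}$ cut out by $v_1$. On the other hand, $u(\mathcal I(R/(x)))=u((y,z))$ for every $u\in V(R/(x))$, so the saturation of $\mathcal I(R/(x))$ is $\{(y,z)^n\}$. Thus $\overline{\mathcal I(R/(x))}$ is not saturated, hence not divisorial by Lemma \ref{DivSat}, even though $e(\mathcal I(R/(x)))=1>0$. This genuinely breaks Step 3. With $J_n=\{f\mid v_1(f)\ge n,\ v_2(f)\ge n\}$ one has $J_n(R/(x))=(y,z)^n$, and the restricted pair satisfies the Minkowski equality on $R/(x)$. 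Yet no $\overline{\mathcal I(R/(x))^{(a)}}$ equals any $\overline{\mathcal J(R/(x))^{(b)}}$: multiplicities force $a=b$, and then $z^{an}$ separates them.

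The repair is to work with saturations rather than integral closures. Let $K^{\mathcal I}_P$ be the $\QQ$-divisorial filtration on $\hat R/P$ defined by those $v_i$, with their weights, that live on $\hat R/P$. Choose $h$ lying in every other $d$-dimensional minimal prime of $\hat R$ but not in $P$. Lifting to $\hat R$, where $I^*_n\hat R$ is cut out by the extended $v_i$, gives $h(K^{\mathcal I}_P)_n\subset I^*_n(\hat R/P)\subset (K^{\mathcal I}_P)_n$. Hence $u(\mathcal I(\hat R/P))=u(K^{\mathcal I}_P)$ for all $u\in V(\hat R/P)$, that is, $\widetilde{\mathcal I(\hat R/P)}=K^{\mathcal I}_P$. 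By Theorem \ref{Theorem14*}, $\mathcal I(\hat R/P)$, $\mathcal J(\hat R/P)$ and their product have the same multiplicities as $K^{\mathcal I}_P$, $K^{\mathcal J}_P$ and $K^{\mathcal I}_PK^{\mathcal J}_P$. So you should apply Theorem 1.4 of \cite{C8} to the $\QQ$-bounded pair $K^{\mathcal I}_P,K^{\mathcal J}_P$. Since your gluing uses only the values $v(\cdot)$, Step 3 then goes through.

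With that fix, your route differs genuinely from the paper's. You obtain the rationality of $(e(\mathcal I)/e(\mathcal J))^{1/d}$ from \cite{C8} on one component. The paper instead combines Theorem \ref{Mink} with Theorem \ref{ValThm} applied to $\tilde{\mathcal I},\tilde{\mathcal J}$ on $R$ itself, so it never restricts boundedness to components.

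Two smaller gaps remain. First, you must treat components with $e(\mathcal I(\hat R/P))=e(\mathcal J(\hat R/P))=0$ separately, via Lemma \ref{LemmaLB}; such components occur, for instance if you drop $v_1$ in the example above. Second, in the converse your formula for $e_i$ assumes that mixed multiplicities are unchanged by integral closure, which you have not justified. It is simpler to note that $v(\mathcal J)=\frac ab v(\mathcal I)$ and $v(\mathcal I\mathcal J)=(1+\frac ab)v(\mathcal I)$ for all $v\in V(R)$, and then compare saturations using Theorem \ref{Theorem14*} and (\ref{eq50}).
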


A proof of Theorem \ref{BoundMink} will be given in Section \ref{SecMinkbound}, using Theorem \ref{BrunnMink}.

 The following theorem is proven in 
Theorem 10.3 \cite{C8}, using the theory of Okounkov bodies and the Brunn-Minkowski theorem of convex geometry. Although Theorem 10.3 \cite{C8} assumes that $\mathcal I$ is a filtration, the proof is valid for arbitrary graded $m_R$-families.  

 \begin{Theorem}\label{BrunnMink}(Theorem 10.3 \cite{C8})
  Suppose that $R$ is a $d$-dimensional analytically irreducible local ring or an excellent local domain of dimension $d\ge 2$
  and $\mathcal I$, $\mathcal J$ are graded $m_R$-families such that $e(\mathcal I)>0$, $e(\mathcal J)>0$  and equality holds in the Minkowski inequality
$$
e(\mathcal I\mathcal J)^{\frac{1}{d}}=e(\mathcal I)^{\frac{1}{d}}+e(\mathcal J)^{\frac{1}{d}}.
$$
Then 
$$
e(\mathcal J)^{\frac{1}{d}}v(\mathcal I)=e(\mathcal I)^{\frac{1}{d}}v(\mathcal J)
$$
for all $v\in V(R)$.
\end{Theorem}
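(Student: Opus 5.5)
The plan is to reproduce the Okounkov-body argument of \cite{C8}, keeping track of the first coordinate of the valuation vectors so that it records $v$. First reduce to the case that $R$ is analytically irreducible, so that $\hat R$ is a domain. If $R$ is an excellent local domain, then $\hat R$ is reduced and equidimensional. Since $e(\mathcal I)=e(\mathcal I\hat R)$, and $V(R)$ corresponds to the disjoint union of the $V(\hat R/P)$ by Lemma \ref{LemVal}, one can work on a single component $\hat R/P$ on which a given $v$ lives. What must be checked here is that equality in the Minkowski inequality for $R$ forces equality on each component. I would use the associativity formula (Corollary \ref{Cor73}) to write each of $e(\mathcal I)$, $e(\mathcal J)$, $e(\mathcal I\mathcal J)$ as a positive combination of the corresponding multiplicities on the components. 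Equality then propagates by the equality case of Minkowski's inequality for sums, which forces the vectors $(e(\mathcal I\hat R/P))_P$ and $(e(\mathcal J\hat R/P))_P$ to be proportional. In any case $\dim N(\hat R)<d$, so Theorem \ref{volmult} applies and $e(\cdot)=\mathrm{vol}(\cdot)$ throughout.

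Now fix $v\in V(R)$ with $R$ analytically irreducible. Realize $v=v_E$ for a prime exceptional divisor $E$ on a blowup $Y$ of an $m_R$-primary ideal (\cite{R2}, Proposition 4.9 \cite{CM}). Choose a flag through a general closed point of $E$, giving a rank $d$ valuation $\nu=(v,\nu_2,\ldots,\nu_d)$ with values in $\ZZ^d$ whose first component is $v$. For a graded family $\mathcal K$, set $\Gamma(\mathcal K)=\{(\nu(f),n): f\in K_n\}$ and let $\Delta(\mathcal K)\subset\RR^d$ be the associated closed convex set inside the cone $C=\Delta(R)$. Since $m_R^{cn}\subset K_n$, the region $C\setminus\Delta(\mathcal K)$ is bounded, and the standard Okounkov computation (as in \cite{C2}, \cite{C8}) gives
$$
\mathrm{vol}(\mathcal K)=d!\,\mathrm{covol}(\Delta(\mathcal K)),
$$
where $\mathrm{covol}$ is the volume of $C\setminus\Delta(\mathcal K)$. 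Moreover $\Gamma(\mathcal I)+\Gamma(\mathcal J)\subset\Gamma(\mathcal I\mathcal J)$, so $\Delta(\mathcal I)+\Delta(\mathcal J)\subset\Delta(\mathcal I\mathcal J)$. Finally, since the first coordinate is $v$,
$$
v(\mathcal K)=\min\{x_1: x\in\Delta(\mathcal K)\}.
$$
This uses $v(\mathcal K)=\lim v(K_n)/n$ from \cite{JM}, together with the fact that $\nu(f)$ realizes $v(f)$ as its first coordinate.

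The key chain of inequalities is then
$$
e(\mathcal I\mathcal J)^{1/d}\le (d!\,\mathrm{covol}(\Delta(\mathcal I)+\Delta(\mathcal J)))^{1/d}\le e(\mathcal I)^{1/d}+e(\mathcal J)^{1/d}.
$$
The first inequality follows from the inclusion $\Delta(\mathcal I)+\Delta(\mathcal J)\subset\Delta(\mathcal I\mathcal J)$. The second is the Brunn--Minkowski inequality for coconvex bodies, in the reversed form of Kaveh--Khovanskii. The hypothesis makes both inequalities equalities. In the equality case of coconvex Brunn--Minkowski (which needs $d\ge2$ and positive covolumes, hence the hypotheses $e(\mathcal I),e(\mathcal J)>0$), the sets $C\setminus\Delta(\mathcal I)$ and $C\setminus\Delta(\mathcal J)$ are homothetic with ratio $\lambda=(e(\mathcal J)/e(\mathcal I))^{1/d}$. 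The homothety is centered at the apex, since both coconvex sets are cut out of the same cone $C$ and contain a neighborhood of the apex in $C$.

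Hence $\Delta(\mathcal J)=\lambda\Delta(\mathcal I)$. Taking the minimum of $x_1$ gives $v(\mathcal J)=\lambda v(\mathcal I)$, which is $e(\mathcal I)^{1/d}v(\mathcal J)=e(\mathcal J)^{1/d}v(\mathcal I)$. I expect the main obstacle to be this equality step. The equality case of Brunn--Minkowski is classical for convex bodies but must be transported to coconvex regions. I would truncate by a half-space $\{x_1+\cdots\le t\}$ for large $t$, so that $C\setminus\Delta$ becomes a difference of convex bodies. I would then check that equality of covolumes forces equality for the truncated convex bodies $\Delta\cap\{\le t\}$ with matching volumes, excluding any translation because the apex is fixed.
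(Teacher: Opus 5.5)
Your proposal is correct and follows essentially the route the paper attributes to Theorem 10.3 of \cite{C8}: Okounkov bodies built from a valuation whose first coordinate is $v$, the inclusion $\Delta(\mathcal I)+\Delta(\mathcal J)\subset\Delta(\mathcal I\mathcal J)$, and the equality case of the reversed (coconvex) Brunn--Minkowski inequality forcing $\Delta(\mathcal J)=\lambda\Delta(\mathcal I)$, from which $v(\mathcal J)=\lambda v(\mathcal I)$ is read off as the minimum of $x_1$. Your truncation plan for the equality step does work: with levels $t$ and $\lambda t$, the covolume equality gives exact equality in the classical Brunn--Minkowski inequality, and the common top slice of $C$ rules out a translation. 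In the reduction step, remember that components of $\hat R$ on which both multiplicities vanish are handled by Lemma \ref{LemmaLB} rather than by the analytically irreducible case.
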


 We extend this  theorem to arbitrary Noetherian local rings  of dimension $d\ge 2$ in Theorem \ref{Mink*}.  The proof is by reduction to the case that $R$ is analytically irreducible 
 with $e(\mathcal I)>0$ and $e(\mathcal J)>0$, so that Theorem \ref{BrunnMink} can be applied. To make this reduction we must assume that $R$ has dimension $\ge 2$.  The analysis of the Minkowski inequalities in Section 9 of \cite{C8} require that there are at least 3 mixed multiplicities, so that $\dim R\ge 2$.   The proof of Theorem \ref{Mink*} is given in Section \ref{SecMink}. Theorem \ref{Mink*} is not valid for one dimensional local rings which are not analytically irreducible. An example showing this is given in Example \ref{ExOned}.

 \begin{Theorem}\label{Mink*} Suppose that $R$ is a $d$-dimensional Noetherian local ring with $d\ge 2$ and $\mathcal I$, $\mathcal J$ are graded $m_R$-families. Then   equality holds in the Minkowski inequality
$$
e(\mathcal I\mathcal J)^{\frac{1}{d}}=e(\mathcal I)^{\frac{1}{d}}+e(\mathcal J)^{\frac{1}{d}}
$$
if and only if 
$$
e(\mathcal J)^{\frac{1}{d}}v(\mathcal I)=e(\mathcal I)^{\frac{1}{d}}v(\mathcal J)
$$
for all $v\in V(R)$.
\end{Theorem}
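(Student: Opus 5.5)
We reduce to the analytically irreducible case, where Theorem \ref{BrunnMink} applies, and use the associativity formula for multiplicities of graded families. Let $P_1,\ldots,P_t$ be the minimal primes of $\hat R$ with $\dim \hat R/P_j=d$, set $R_j=\hat R/P_j$ and $\lambda_j=\ell(\hat R_{P_j})>0$. Each $R_j$ is a complete, hence analytically irreducible, local domain of dimension $d$. By the associativity formula (Corollary \ref{Cor73}, or the argument in the proof of Theorem \ref{multlim}), applied also to the mixed multiplicities through the polynomial of Theorem \ref{Minkcoef*}, we have
$$
e(\mathcal I^{[d-i]},\mathcal J^{[i]})=\sum_j\lambda_j\, e(\mathcal IR_j^{[d-i]},\mathcal JR_j^{[i]})
$$
for all $i$. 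By Lemma \ref{LemVal}, $V(R)$ is the disjoint union of the $V(R_j)$, and for $v\in V(R_j)$ we have $v(\mathcal I)=v(\mathcal IR_j)$, since $v(I_n)=v(I_nR_j)$ because $I_n\hat R$ is generated by $I_n$. Write $a_j=e(\mathcal IR_j)$, $b_j=e(\mathcal JR_j)$, $c_j=e(\mathcal IR_j\mathcal JR_j)$. Then $e(\mathcal I)=\sum\lambda_ja_j$, and similarly for $\mathcal J$ and $\mathcal I\mathcal J$.

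For the forward direction, Minkowski 4) on each $R_j$ gives $c_j^{1/d}\le a_j^{1/d}+b_j^{1/d}$. Summing with weights $\lambda_j$ and applying the reverse-Minkowski (Minkowski) inequality for $\ell^{1/d}$-type sums, namely that $x\mapsto(\sum\lambda_j x_j^d)^{1/d}$ is a norm with $d\ge 2$, we get
$$
e(\mathcal I\mathcal J)^{1/d}\le \Big(\sum\lambda_j(a_j^{1/d}+b_j^{1/d})^d\Big)^{1/d}\le \Big(\sum\lambda_ja_j\Big)^{1/d}+\Big(\sum\lambda_jb_j\Big)^{1/d}.
$$
Global equality therefore forces equality in each local Minkowski inequality. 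It also forces equality in the $\ell^d$ triangle inequality, so the vectors $(a_j^{1/d})$ and $(b_j^{1/d})$ are proportional: $b_j^{1/d}=\kappa a_j^{1/d}$ with $\kappa=e(\mathcal J)^{1/d}/e(\mathcal I)^{1/d}$ when these are nonzero. This is where $d\ge 2$ is used, since for $d=1$ the norm is not strictly convex.

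The case where $a_j$ or $b_j$ vanishes must be handled separately, and I expect this to be the main obstacle. If $e(\mathcal IR_j)=0$, I would show that $v(\mathcal IR_j)=0$ for all $v\in V(R_j)$; for instance, a positive value $v(\mathcal I)>0$ gives $I_n\subset I(v)_{\lceil n\epsilon\rceil}$, so $e(\mathcal I R_j)\ge e(\mathcal I(v)^{(\epsilon)})>0$. The needed ingredient is that $e(\mathcal I(v))>0$, which follows from $\mathcal I(v)$ being contained in an adic filtration up to integral closure (Rees valuations, or Izumi). With that in hand, the identity holds trivially on $R_j$ when $a_j=0$, since proportionality then forces $b_j=0$ as well. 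When $a_j,b_j>0$, Theorem \ref{BrunnMink} on $R_j$ gives $b_j^{1/d}v(\mathcal I)=a_j^{1/d}v(\mathcal J)$, and substituting $b_j^{1/d}=\kappa a_j^{1/d}$ yields $e(\mathcal J)^{1/d}v(\mathcal I)=e(\mathcal I)^{1/d}v(\mathcal J)$ for all $v\in V(R_j)$. If globally $e(\mathcal I)=0$, then all $a_j=0$, so all $v(\mathcal I)=0$ and the identity holds.

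For the converse, suppose $\beta v(\mathcal I)=\alpha v(\mathcal J)$ for all $v$, where $\alpha=e(\mathcal I)^{1/d}$ and $\beta=e(\mathcal J)^{1/d}$. If $\alpha=0$, then $v(\mathcal I)=0$ for all $v$, so the saturation $\tilde{\mathcal I}$ is the $m_R$-adic filtration or trivial; equality then follows from $e(\mathcal I\mathcal J)\le e(\mathcal J)$ and $e(\mathcal I\mathcal J)\ge e(\mathcal J)$ via Theorem \ref{Theorem14*}. Otherwise, with rational approximations if needed, $\tilde{\mathcal I}$ and $\widetilde{\mathcal J}$ agree up to rescaling, $\widetilde{\mathcal J}=\tilde{\mathcal I}^{(\beta/\alpha)}$, and $v(\mathcal I\mathcal J)=v(\mathcal I)+v(\mathcal J)=(1+\beta/\alpha)v(\mathcal I)$. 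By Theorem \ref{Theorem14*}, multiplicities depend only on saturations, and $e(\mathcal K^{(c)})=c^de(\mathcal K)$. Hence $e(\mathcal I\mathcal J)=(1+\beta/\alpha)^d\alpha^d=(\alpha+\beta)^d$. For irrational $c$, the rescaling $\mathcal K^{(c)}$ with the saturation compatibility must be checked; this is a technical but routine step.
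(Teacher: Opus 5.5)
Your proof is correct. For the direction from the Minkowski equality to the valuation identity, it takes a genuinely different and shorter route than the paper; the converse direction is essentially the paper's argument.

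\textbf{How the paper argues.} The paper works through mixed multiplicities:
\begin{enumerate}
\item It uses Remark \ref{RemMink}, taken from Section 9 of \cite{C8}, to turn the global equality into $e_j^2=e_{j-1}e_{j+1}$ with a common ratio $\xi$.
\item It runs the Cauchy--Schwarz type computation (\ref{eq41}) on $e_j=\sum_i a_i e(i)_j$. This shows each component $\hat R/P_i$ either has all $e(i)_j=0$ or has the same ratio $\xi$.
\item It invokes Remark \ref{RemMink} a second time to recover the Minkowski equality on each $\hat R/P_i$, and only then applies Theorem \ref{BrunnMink}.
\end{enumerate}

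\textbf{How your argument differs.} You use only inequality 4) on each $R_j$ together with the equality case of the weighted $\ell^d$ Minkowski inequality. This gives, in one step, both the local Minkowski equalities and the proportionality $e(\mathcal JR_j)^{1/d}=\kappa\, e(\mathcal IR_j)^{1/d}$. That proportionality is exactly what is needed to glue the local conclusions of Theorem \ref{BrunnMink}.

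Consequently Theorem \ref{Minkcoef*} and Remark \ref{RemMink} are not needed at all; your remark about mixed multiplicities is never used. Your route also shows where $d\ge 2$ enters, beyond the hypothesis of Theorem \ref{BrunnMink}: it is strict convexity of the $\ell^d$ norm. That is precisely what breaks in Example \ref{ExOned}, where $(e(\mathcal IR_j))_j=(n,1)$ and $(e(\mathcal JR_j))_j=(1,n)$ are not proportional. The paper's route additionally yields equality in all the Minkowski inequalities on each component, which the theorem does not require.

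The vanishing case you flag as the main obstacle is Lemma \ref{LemmaLB}, proved via Lemma \ref{Lemma21} exactly as you sketch.

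\textbf{Small points.}
\begin{itemize}
\item When $v(\mathcal I)=0$ for all $v\in V(R)$, the saturation is $\tilde I_n=m_R$ for all $n>0$, not the $m_R$-adic filtration. This is harmless, since you only use $v(\mathcal I\mathcal J)=v(\mathcal J)$ and $\mathcal I\mathcal J\subset\mathcal J$.
\item The case $e(\mathcal I)>0=e(\mathcal J)$ in the forward direction follows by symmetry.
\item For the rescaling step, a clean approach is to apply Theorem \ref{Theorem313} in both directions to the graded families $\{I_{qn}J_{qn}\}_n$ and $\{I_{pn}\}_n$. Take positive integers $p,q$ with $p/q$ slightly above or slightly below $1+\beta/\alpha$, and let $p/q\to 1+\beta/\alpha$. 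This avoids $\mathcal I^{(c)}$ altogether. That matters because, for non-integer $c$, $\mathcal I^{(c)}$ need not be a graded family when $\mathcal I$ is not a filtration.
\end{itemize}
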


 We have the following general result characterizing equality in the Minkowski inequality, which will be proven in Corollary \ref{Minkgen}.
Corollary \ref{Minkgen*} is not valid for one dimensional local rings which are not analytically irreducible. Example \ref{ExOned} of Section \ref{SecMink} shows this.

\begin{Theorem}\label{Minkgen*} Suppose that $R$ is a $d$-dimensional Noetherian local ring with $d\ge 2$ and $\mathcal I$, $\mathcal J$ are graded $m_R$-families such that 
$e(\mathcal J)>0$. 
Then there is equality in the Minkowski inequality
$$
e(\mathcal I\mathcal J)^{\frac{1}{d}}= e(\mathcal I)^{\frac{1}{d}}+e(\mathcal J)^{\frac{1}{d}}
$$
if and only if $\tilde{\mathcal I}=\widetilde{(\mathcal J^{(c)})}$ for some $c\in \RR_{\ge 0}$.
\end{Theorem}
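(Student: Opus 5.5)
We will deduce the statement from Theorem \ref{Mink*} together with Theorem \ref{Theorem14*} and the definition of saturation. The basic observation is that $\tilde{\mathcal I}$ is determined entirely by the function $v\mapsto v(\mathcal I)$ on $V(R)$. Indeed, $\tilde I_n=\{f\in m_R\mid v(f)\ge nv(\mathcal I)\ \forall v\in V(R)\}$. Hence if $v(\mathcal I)=v(\mathcal K)$ for all $v\in V(R)$, then $\tilde{\mathcal I}=\tilde{\mathcal K}$. We will also check the converse for the values, namely $v(\tilde{\mathcal I})=v(\mathcal I)$. Since $\mathcal I\subset\tilde{\mathcal I}$, we have $v(\tilde{\mathcal I})\le v(\mathcal I)$. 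The reverse inequality holds because every $f\in\tilde I_n$ satisfies $v(f)\ge nv(\mathcal I)$.

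The second ingredient is the scaling rule $v(\mathcal J^{(c)})=c\,v(\mathcal J)$ for $c>0$. This follows from $v(\mathcal J)=\lim v(J_m)/m$ (\cite{JM}, extended to families, or directly via the inf/lim along the subsequence $\lceil cn\rceil$). For $c=0$ the family $\mathcal J^{(0)}$ has $v(\mathcal J^{(0)})=0$, which matches $c\,v(\mathcal J)$ with $c=0$; this case only occurs if $e(\mathcal I)=0$. We also need the companion rule $e(\mathcal J^{(c)})=c^de(\mathcal J)$, which comes from Theorem \ref{Minkcoef*}, or directly from the limit in Theorem \ref{multlim} together with $\mathcal J^{(c)}$ being sandwiched appropriately for rational and then real $c$.

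For the forward direction, assume the Minkowski equality holds. By Theorem \ref{Mink*}, $e(\mathcal J)^{1/d}v(\mathcal I)=e(\mathcal I)^{1/d}v(\mathcal J)$ for all $v$. Since $e(\mathcal J)>0$, set $c=(e(\mathcal I)/e(\mathcal J))^{1/d}\ge0$. Then $v(\mathcal I)=c\,v(\mathcal J)=v(\mathcal J^{(c)})$ for all $v\in V(R)$, so $\tilde{\mathcal I}=\widetilde{\mathcal J^{(c)}}$.

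For the converse, suppose $\tilde{\mathcal I}=\widetilde{\mathcal J^{(c)}}$. Taking values gives $v(\mathcal I)=c\,v(\mathcal J)$ for all $v$. We must show $e(\mathcal I)=c^de(\mathcal J)$ in order to apply Theorem \ref{Mink*}. Theorem \ref{Theorem14*} gives $e(\mathcal I)=e(\tilde{\mathcal I})$ and $e(\mathcal J^{(c)})=e(\widetilde{\mathcal J^{(c)}})$, hence $e(\mathcal I)=e(\mathcal J^{(c)})=c^de(\mathcal J)$. Then $e(\mathcal J)^{1/d}v(\mathcal I)=c\,e(\mathcal J)^{1/d}v(\mathcal J)=e(\mathcal I)^{1/d}v(\mathcal J)$, and Theorem \ref{Mink*} yields the equality.

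The degenerate case $c=0$ needs a separate check. There $\widetilde{\mathcal J^{(0)}}$ has all values $0$, so $\tilde{I}_n=m_R$ and $e(\mathcal I)=e(\tilde{\mathcal I})=0$ by Theorem \ref{Theorem14*}. The criterion of Theorem \ref{Mink*} then reads $e(\mathcal J)^{1/d}v(\mathcal I)=0$, which holds.

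The main obstacle is justifying the homogeneity $e(\mathcal J^{(c)})=c^de(\mathcal J)$ for real $c$ and the scaling of $v$ for graded families rather than filtrations. I would first attempt this via the intersection-theoretic limit $-((-F_n/n)^d)$, comparing $F_{\lceil cn\rceil}$ with $F_n$ and using the existence of limits from Theorem \ref{multlim}. For irrational $c$ I would use monotonicity, $e(\mathcal J^{(c)})\ge e(\mathcal J^{(c')})$ for $c'\le c$ when $\mathcal J$ is a filtration, after replacing $\mathcal J$ by its saturation, which is a filtration with the same $e$ and the same values.
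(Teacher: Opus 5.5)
Your proposal is correct and follows the paper's argument. The forward direction is exactly the paper's: Theorem \ref{Mink*} with $c=e(\mathcal I)^{1/d}/e(\mathcal J)^{1/d}$, plus (\ref{eq51}). For the converse, where the paper simply cites Theorem \ref{Mink*}, you supply the implicit step $v(\mathcal I)=c\,v(\mathcal J)$ and $e(\mathcal I)=e(\tilde{\mathcal I})=e(\mathcal J^{(c)})=c^de(\mathcal J)$ via Theorem \ref{Theorem14*}, just as the paper does inside the proof of Theorem \ref{Mink}.

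The ``main obstacle'' you flag is not really one. Both (\ref{eq50}) and (\ref{eq51}) hold for every real $c>0$ in one line, by evaluating the existing limits $\lim_n e(J_n)/n^d$ (Theorem \ref{multlim}) and $\lim_n v(J_n)/n=v(\mathcal J)$ (Fekete, since $n\mapsto v(J_n)$ is subadditive) along $n=\lceil cm\rceil$. No sandwiching or monotonicity argument is needed.
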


Corollary \ref{Minkgen*} is proven in Corollary 1.5  \cite{BLQ} when $R$ is analytically irreducible, using Theorem 10.3 \cite{C8} (Theorem \ref{BrunnMink} of this paper). 



\subsection{One dimensional Noetherian local rings}

Let $R$ be a one dimensional Noetherian local ring. Suppose that $I$ and $J$ are $m_R$-primary ideals. Then
$$
e(IJ)=e(I)+e(J)
$$ 
by Proposition 4.3 \cite{GGV}. Thus if $\mathcal I$ and $\mathcal J$ are graded $m_R$-families, then 
\begin{equation}\label{onedim}
e(\mathcal I\mathcal J)=e(\mathcal I)+e(\mathcal J),
\end{equation}
that is, the Minkowski equality is satisfied for all graded $m_R$-families. 

 If $R$ is a complete local domain, then $R$ is excellent and Henselian, so its normalization $A$ is a finite $R$-module (Theorem 9.9.2 \cite{HS}) and is a one dimensional regular local ring (Theorem 4.9 \cite{Mi}). Thus $A$ is a discrete valuation ring and  $V(R)=\{v\}$ where $v$ is the valuation of $A$. 

Suppose that $R$ is a one dimensional local ring with no other assumptions. Let $P_1,\ldots,P_r$ be the minimal primes of $\hat R$. Then $V(R)=\{v_1,\ldots,v_r\}$ where $V(\hat R/P_i)=\{v_i\}$ by Lemma \ref{Lemma4}. Thus if $\mathcal I=\{I_n\}$ is a graded $m_R$-family, then 
$$
\tilde I_n=\{f\in R\mid v_i(f)\ge nv_i(\mathcal I)\mbox{ for }1\le i\le r\}.
$$

If $R$ is analytically irreducible, so that $V(R)=\{v\}$ where $\mathcal O_v$ is the normalization of $R$, then $\tilde{\mathcal I}=\mathcal I(v)^{(v(\mathcal I))}$. In particular, the conclusions of Theorem \ref{Minkgen*} hold if $R$ is one dimensional and analytically irreducible. 

\begin{Example}\label{ExOned} If $R$ is a one dimensional Noetherian local ring which is not analytically irreducible, then the conclusions of Theorems \ref{Mink*} and \ref{Minkgen*} may not hold, even for adic filtrations.
\end{Example}

\begin{proof} Let $k$ be a field of characteristic not equal to 2 and $R=k[[x,y]]/(y^2-x^2)$, a quotient of a power series ring. Let $n$ be an integer larger than one and let 
$I=(y-x,x^n)$ and $J=(y+x),x^n)$. Let $\mathcal I=\{I^m\}$ and $\mathcal J=\{J^m\}$.

$V(R)=\{v,w\}$ where $v$ is the $x$-adic valuation of $R/(y-x)\cong k[[x]]$ and $w$ 
is the $x$-adic valuation of $R/(y+x)\cong k[[x]]$. $v(I)=v(IR/(y-x))=n$ and $v(J)=v(JR/(y-x))=1$. Thus $v(\mathcal I)=n$ and $v(\mathcal J)=1$. Similarly, $w(\mathcal I)=1$ and $w(\mathcal J)=n$. 
$e(I)=e(IR(y-x))+e(IR/(y+x))=n+1$ and similarly, $e(J)=1+n$ for $n\ge 1$. Thus
$\displaystyle e(\mathcal I)=n+1$ and $e(\mathcal J)=n+1$. Therefore
$$
e(\mathcal J)v(\mathcal I)=(n+1)n\ne n+1=e(\mathcal I)v(\mathcal J)\mbox{ and }
e(\mathcal J)w(\mathcal I)=n+1\ne (n+1)n=e(\mathcal I)v(\mathcal J),
$$
so the conclusions of Theorem \ref{Mink*} and Corollary \ref{Minkgen*} do not hold, even though the Minkowski equality $e(\mathcal I\mathcal J)=e(\mathcal I)+e(\mathcal J)$ holds, by (\ref{onedim}).
\end{proof}

\section{Intersection products of graded $m_R$-families}
In this section we assume that 
$R$ is a $d$-dimensional Noetherian local ring and $\mathcal I(j)=\{I(j)_n\}$ are graded $m_R$-families, for $1\le j\le r$.

Let $\mbox{BirMod}(R)$ be the directed set of blowups $\phi:X\rightarrow \mbox{Spec}(R)$ of  $m_R$-primary ideals of $R$.

Let $\phi:X\rightarrow\mbox{Spec}(R)\in \mbox{BirMod}(R)$. 
We refer to Section 9 of \cite{Mum} for   the theory of Cartier divisors on a scheme. We will say that a Cartier divisor $D$ on $X$ is effective if $\mathcal O_X(-D)\subset \mathcal O_X$ (page 63 \cite{Mum}), and 
that  a Cartier divisor on $X$ has exceptional support if
the support of $\mathcal O_X(D)$ is contained in $\phi^{-1}(m_R)$. The support of $\mathcal O_X(D)$ is the (closed) set of points $p\in X$ such that a local equation of $D$ at $p$ is not in $\mathcal O_{X,p}^*$ (B.4.2 \cite{Fl}).
Let $\mbox{Div}(X)$ be the set of Cartier divisors on $X$ with exceptional support. 

We will use the intersection theory on $X$ of \cite{CM}, giving  products
$(D_1\cdot\ldots \cdot D_{d})\in \ZZ$  and $(D_1\cdot\ldots\cdot D_{d-r}\cdot V)\in \ZZ$ for $D_1,\ldots,D_d\in \mbox{Div}(X)$, and $V$
an integral subscheme of $X$ of dimension $r$ which contracts to $m_R$.  The definition and important properties of this product are given an essentially self contained derivation in Chapter 2 of \cite{CM}.
To develop this product, which is valid over an arbitrary Noetherian local ring $R$, we use the theory of rational equivalence of Thorup for finite type schemes over a Noetherian base \cite{Th}. We also adapt some of the material from the intersection theory over
fields in Fulton’s book \cite{Fl} and from Kleiman's article \cite{Kl} which extends to our more general setting.

The intersection product extends multilinearly to $\mbox{Div}(X)_{\RR}:=\mbox{Div}(X)\otimes\mathcal \RR$.
We will say that $D\in \mbox{Div}(X)_{\RR}$ is effective if  
$D$ has an expression $D=\sum a_iE_i$ where $E_i\in \mbox{Div}(X)$ are effective Cartier divisors and all $a_i\in \RR_{>0}$.

Suppose that $X, Y\in \mbox{BirMod}(R)$, $\phi:Y\rightarrow X$ is an $R$-morphism  and $D_1,\ldots,D_d\in \mbox{Div}(X)_{\RR}$.  Then 
$\phi^*(D_i)\in \mbox{Div}(Y)_{\RR}$ for all $i$ and
\begin{equation}
(D_1\cdot\ldots\cdot D_d)=(\phi^*(D_1)\cdot\ldots\cdot \phi^*(D_d))
\end{equation}
by Proposition 2.8 \cite{CM}.
Thus for $Y_1,\ldots, Y_d\in \mbox{BirMod}(R)$ and Cartier divisors $D_i\in \mbox{Div}(Y_i)_{\RR}$ for $1\le i\le d$, 
 we have a well defined product $(D_1\cdot\ldots\cdot D_d)\in \RR$, defined as 
$$
(D_1\cdot\ldots\cdot D_d)=(\phi_1^*(D_1)\cdot\ldots\cdot \phi_d^*(D_d))
$$
If $Z\in \mbox{Birmod}(R)$ and $\phi_i:Z\rightarrow Y_i$ for $1\le i\le d$.

Suppose that $D_1\in \mbox{Div}(Y_1)_{\RR}$ and $D_2\in \mbox{Div}(Y_2)_{\RR}$. 
 We will say that $D_1\ge D_2$ if 
there exists $Z\in \mbox{BirMod}(R)$ and $R$-morphisms $\phi_1:Z\rightarrow Y_1$ and $\phi_2:Z\rightarrow Y_2$ such that 
$\phi_1^*(D_1)- \phi_2^*(D_2)$ is effective on $Z$.

A divisor $D\in \mbox{Div}(X)_{\RR}$ is nef (numerically effective) if $(D\cdot C)=0$ for all exceptional curves $C$ on $X$ (closed integral curves that contract to $m_R$). This concept is developed in Section 1.4 of \cite{Laz1} on algebraic schemes, and the proofs extend to our relative situation.
If $D\in \mbox{Div}(X)_{\RR}$ is nef and $Z\in \mbox{BirMod}(R)$ is such that there is an $R$-morphism $\phi:Z\rightarrow X$, then $\phi^*(D)\in \mbox{Div}(Z)_{\RR}$ is nef.

The proof of the following lemma is as in Example 1.4.16 \cite{Laz1}.

\begin{Lemma}\label{NefLem}
If $V$ is an $r$ dimensional integral subscheme  of $X\in \mbox{BirMod}(R)$ which contracts to $m_R$ and $G_1,\ldots, G_{d-r}\in \mbox{Nef}(X)$, then 
$(G_1\cdot\ldots\cdot G_{d-r}\cdot V)\ge 0$.
\end{Lemma}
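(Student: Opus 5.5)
This is a Kleiman-type positivity statement. I would prove it by induction on $r=\dim V$, mimicking Example 1.4.16 of \cite{Laz1}: first reduce to the case where the nef classes are ample (relative to $\phi$), and then pass to a limit. (I read the definition of nef as $(D\cdot C)\ge 0$ for all exceptional curves $C$; the printed ``$=0$'' appears to be a typo.)

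\emph{Base case.} If $r=1$, then $V$ is an exceptional curve and $(G_1\cdot\ldots\cdot G_{d-1}\cdot V)$ makes sense as stated. The claim is immediate for $d-r=1$. More precisely, when only one divisor is left to intersect with a curve, the inequality is just the definition of nef.

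\emph{Ample case.} Since $X$ is the blowup of an $m_R$-primary ideal $I$, there is an effective Cartier divisor $A$ with exceptional support and $\mathcal O_X(-A)=I\mathcal O_X$ which is $\phi$-ample. So $H=-A\in\mbox{Div}(X)$ is relatively ample, and a multiple $mH$ is very ample over $\mbox{Spec}(R)$. First I would show the statement when every $G_i$ is a positive rational combination of $H$ and nef divisors such that $G_i$ is ample. For an ample integral $G_1$ (after scaling), take $m\gg0$ so that $\mathcal O_X(mG_1)$ is generated by sections restricted to $V$. Choose a section $s$ not vanishing identically on $V$; this is possible since $V\subset\phi^{-1}(m_R)$ is projective over the residue field $R/m_R$. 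We may need to pass to an infinite residue field, for instance via $R\to R[t]_{m_R[t]}$, which preserves lengths and intersection numbers. Then $mG_1\cdot V$ is represented by an effective cycle $\sum n_j V_j$ of $(r-1)$-dimensional integral subschemes with $n_j\ge0$, all contracting to $m_R$. Hence
$$
(G_1\cdot\ldots\cdot G_{d-r}\cdot V)=\frac1m\sum n_j(G_2\cdot\ldots\cdot G_{d-r}\cdot V_j),
$$
and each term is nonnegative by induction. (Strictly, the induction here should run on the number of divisors, with the dimension of $V$ varying accordingly.)

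\emph{General nef case.} For nef $G_i\in\mbox{Nef}(X)$, I would use Kleiman's criterion in this relative setting: for $\epsilon\in\QQ_{>0}$, the divisor $G_i+\epsilon H$ is ample when $G_i$ is nef. This holds for integral (or $\QQ$-) divisors by the relative Kleiman theorem on the projective $R/m_R$-scheme $\phi^{-1}(m_R)$. Together with the fact that ampleness of $\phi$ can be checked on the closed fiber, because $\phi$ is an isomorphism off it, this gives the claim. For real coefficients, write $G_i$ as a limit of nef $\QQ$-divisors plus small multiples of $H$. Then
$$
\bigl((G_1+\epsilon H)\cdot\ldots\cdot(G_{d-r}+\epsilon H)\cdot V\bigr)\ge 0,
$$
and since this is a polynomial in $\epsilon$ and in the real coefficients, letting $\epsilon\to0$ gives the claim by continuity of the multilinear product.

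\emph{Main obstacle.} The hard step is transferring Kleiman's criterion and the ``nef $+$ ample is ample'' argument to this relative setting over an arbitrary Noetherian local base. To handle it, I would reduce everything to the closed fiber $\phi^{-1}(m_R)$, which is projective over the field $R/m_R$. Intersection numbers against subschemes $V$ contracting to $m_R$ can be computed there via the restriction of line bundles, by the construction in Chapter 2 of \cite{CM}. Once this reduction is justified, the classical statement from \cite{Laz1} applies directly, and the inductive section argument above goes through unchanged.
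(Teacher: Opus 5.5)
Your proposal is correct and follows essentially the paper's route. The paper's proof is only a pointer to Example 1.4.16 of \cite{Laz1}: approximate each nef class by ample ones via Kleiman's criterion on the projective closed fiber $\phi^{-1}(m_R)$, get nonnegativity for ample classes by cutting $V$ down with a nonzero section, and conclude by multilinearity and continuity, exactly as you do.

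Two minor remarks. First, the number of divisors must equal $\dim V$; the printed statement has a dimension/codimension slip, which is why your base case reads awkwardly, and that base case is simply one nef divisor against an exceptional curve. Second, the passage to an infinite residue field is unnecessary, since some power of an ample line bundle on an integral projective scheme over any field has a nonzero section.
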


\begin{Lemma}\label{Lemma4}
Suppose that $Y\in \mbox{BirMod}(R)$ and 
$F_i,G_i\in {\rm Div}(Y)_{\RR}$ for $1\le i\le d$ are nef and $F_i\ge G_i$ for $1\le i\le d$. Then
$$
(F_1\cdot\ldots\cdot F_d)\ge (G_1\cdot\ldots\cdot G_d).
$$
\end{Lemma}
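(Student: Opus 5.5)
Lemma \ref{Lemma4} is a monotonicity statement for intersection numbers of nef divisors. I would prove it by telescoping, replacing one $F_i$ by $G_i$ at a time. After pulling back to a common $Z\in\mbox{BirMod}(R)$ dominating $Y$ (on which all the differences $F_i-G_i$ become effective), I would write
$$
(F_1\cdot\ldots\cdot F_d)-(G_1\cdot\ldots\cdot G_d)=\sum_{i=1}^d (G_1\cdot\ldots\cdot G_{i-1}\cdot (F_i-G_i)\cdot F_{i+1}\cdot\ldots\cdot F_d).
$$
This identity holds by multilinearity of the intersection product on $\mbox{Div}(Z)_{\RR}$. Pullbacks preserve both intersection numbers (Proposition 2.8 \cite{CM}) and nefness, so we may work on $Z$ throughout. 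It then suffices to show that each summand is $\ge 0$.

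Fix $i$ and set $D=F_i-G_i$, which is effective on $Z$. By definition of effective in $\mbox{Div}(Z)_{\RR}$, we can write $D=\sum_j a_jE_j$ with $a_j>0$ and each $E_j$ an effective Cartier divisor on $Z$. By linearity it is enough to treat a single effective Cartier divisor $E$.

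Since $F_i$ and $G_i$ are in $\mbox{Div}(Y)$, they have exceptional support. I would argue that the $E_j$ can also be taken with exceptional support: write $D$ as a real combination of exceptional Cartier divisors and use that effectivity can be checked via exceptional components. Alternatively, the product $(\cdots\cdot E)$ only depends on the cycle $[E]$ restricted to the exceptional locus, by the construction of the product in \cite{CM}.

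For an effective Cartier divisor $E$ with exceptional support, the associated Weil cycle is $[E]=\sum_k n_kV_k$. Here $n_k\ge 0$ and the $V_k$ are integral $(d-1)$-dimensional subschemes contracting to $m_R$. From the construction in \cite{CM}, we have
$$
(H_1\cdot\ldots\cdot H_{d-1}\cdot E)=\sum_k n_k\,(H_1\cdot\ldots\cdot H_{d-1}\cdot V_k).
$$
In our summand the $H$'s are the nef divisors $G_1,\ldots,G_{i-1},F_{i+1},\ldots,F_d$. Lemma \ref{NefLem} with $r=d-1$ then gives that each term is nonnegative, so each summand is $\ge 0$ and the inequality follows.

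The main obstacle is this last reduction: making sure that the decomposition of the effective $\RR$-divisor $F_i-G_i$ into effective Cartier divisors with exceptional support is legitimate. I also need the identity $(\cdots\cdot E)=(\cdots\cdot[E])$ with nonnegative coefficients to be available from \cite{CM}. If exceptional support of the pieces is problematic, I would instead use the identity for the whole difference: because $F_i-G_i$ has exceptional support, its cycle is supported in $\phi^{-1}(m_R)$, and effectivity makes its coefficients nonnegative.
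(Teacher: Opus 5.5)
Your proposal is correct and is essentially the paper's proof. The paper uses the same multilinear telescoping sum, only with the $F_j$ placed before and the $G_j$ after the factor $F_i-G_i$ (the reverse of your ordering, which makes no difference), and each summand is nonnegative by Lemma \ref{NefLem} because $F_i-G_i$ is effective and the remaining factors are nef. The step you flag as the main obstacle is automatic: the paper's definition of an effective element of $\mbox{Div}(X)_{\RR}$ already requires the pieces $E_i$ to lie in $\mbox{Div}(X)$, i.e.\ to be effective Cartier divisors with exceptional support, so your passage to a common $Z$ and to the associated cycle just makes explicit what the paper leaves implicit.
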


\begin{proof} We have that 
$$
\left(F_1\cdot\ldots\cdot F_{i-1}\cdot(F_i-G_i)\cdot G_{i+1}\cdot\ldots\cdot G_d\right)\ge 0
$$
for $1\le i\le d$ by Lemma \ref{NefLem}.
since $F_i-G_i\ge 0$ and the $F_j$ and $G_j$ are nef.
The lemma now follows from  multilinearity of the intersection product.
\end{proof}

\begin{Theorem}\label{Theorem70} 
Suppose that $R$ is a $d$-dimensional Noetherian local ring with $d\ge 1$ and  $\mathcal I(1),\ldots,\mathcal I(r)$ are graded $m_R$-families. 
Let $Y(j)_n\in \mbox{BirMod}(R)$  be the blowup of $I(j)_n$. Define a Cartier divisor $F(j)_n$ on $Y(j)_n$ by $I(j)_n\mathcal O_{Y(j)_n}=\mathcal O_{Y(j)_n}(-F(j)_n)$.
$F(j)_n$ is effective and $-F(j)_n$ is ample on $Y(j)_n$, hence nef. 

Suppose that  $d_1,\ldots,d_r\in \ZZ_{\ge 0}$ are such that $d_1+\cdots+d_r=d$. Then
$$
\begin{array}{l}
\displaystyle \lim_{n_1,\ldots,n_r\rightarrow\infty} \left((\frac{-F(1)_{n_1}}{n_1})^{d_1}\cdot\ldots\cdot (\frac{-F(r)_{n_r}}{n_r})^{d_r}\right)\\
\,\,\,\,\,=\sup_{n_1,\ldots,n_r} \left\{\left((\frac{-F(1)_{n_1}}{n_1})^{d_1}\cdot\ldots\cdot (\frac{-F(r)_{n_r}}{n_r})^{d_r}\right)\right\}.
\end{array}
$$
In particular, the limit exists.
We will denote the above limit by
$(\mathcal I(1)^{d_1}\cdot\ldots\cdot \mathcal I(r)^{d_r})$. 
\end{Theorem}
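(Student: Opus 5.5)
The plan is a Fekete-type superadditivity argument, built on the monotonicity of Lemma \ref{Lemma4}. The basic comparison is this: if $I_aI_b\subset I_{a+b}$, then on a common model $Z$ dominating $Y_a,Y_b,Y_{a+b}$ we have $F_{a+b}\le F_a+F_b$. The reason is that $\mathcal O_Z(-F_a-F_b)=I_aI_b\mathcal O_Z\subset I_{a+b}\mathcal O_Z=\mathcal O_Z(-F_{a+b})$. Consequently $-F_{a+b}\ge -F_a-F_b$ as $\RR$-divisors on $Z$, and all of these divisors are nef, since they pull back from ample divisors on the blowups. Iterating the inclusion $I_n^k\subset I_{kn}$ gives in particular $-F_{kn}\ge k(-F_n)$, that is, $\frac{-F_{kn}}{kn}\ge\frac{-F_n}{n}$.

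\textbf{Step 1: the sequences are bounded above.} Choose $s$ with $m_R^{sn}\subset I(j)_n$ for all $n$ and $j$. Let $G$ be the divisor with $m_R\mathcal O=\mathcal O(-G)$ on the blowup of $m_R$. Then $-F(j)_n\le -snG$, so $\frac{-F(j)_n}{n}\le -sG$. Lemma \ref{Lemma4} then bounds every product in the statement by $((-sG)^d)$. Hence the supremum $S$ is a finite real number, and the products are nonnegative.

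\textbf{Step 2: liminf $\ge$ sup.} Fix $\epsilon>0$ and choose $(m_1,\ldots,m_r)$ whose product is within $\epsilon$ of $S$. Write $n_j=q_jm_j+t_j$ with $0\le t_j<m_j$. Since $I(j)_0=R$, we have $I(j)_{m_j}^{q_j}I(j)_{t_j}\subset I(j)_{n_j}$. This gives $-F(j)_{n_j}\ge q_j(-F(j)_{m_j})+(-F(j)_{t_j})$. The divisor $-F(j)_{t_j}$ is not effective in the needed direction, since it is $\le 0$. To handle this, bound it below using the finitely many $t<m_j$: because $m_R^{st}\subset I_t$, we get $-F_{t}\ge -stG\ge -s m_j G$. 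Therefore
\[
\frac{-F(j)_{n_j}}{n_j}\ \ge\ \frac{q_jm_j}{n_j}\,\frac{-F(j)_{m_j}}{m_j}\;-\;\frac{sm_j}{n_j}\,G .
\]
The right side is a combination of nef divisors, because $-G$ is nef and enters with a positive coefficient. As the $n_j\to\infty$, the coefficients tend to $1$ and $0$ respectively. Apply Lemma \ref{Lemma4} on a common model to the nef divisors $\frac{-F(j)_{n_j}}{n_j}$ and the right-hand sides. Expanding by multilinearity, the product of the right-hand sides tends to the product for $(m_1,\ldots,m_r)$. So $\liminf\ge S-\epsilon$.

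\textbf{Step 3: conclusion.} Trivially $\limsup\le S$, so the limit exists and equals $S$.

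The main obstacle I expect is bookkeeping in Lemma \ref{Lemma4}. It requires both families of divisors to be nef and to live on one model, and the lower bound must be nef. The mixed sign in the second term needs care: $-\frac{sm_j}{n_j}G$ is nef because $-G$ is ample on the blowup of $m_R$. I also need to check that the expansion produces only finitely many bounded cross terms. These terms are intersection numbers of the fixed nef divisors $-F(j)_{m_j}/m_j$ and $-G$, so their coefficients go to $0$.
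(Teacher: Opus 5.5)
Your Step 2 is correct and takes a different, somewhat cleaner route than the paper's. Step 1, however, is wrong as written and needs a one-line repair.

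\textbf{The error in Step 1.} From $m_R^{sn}\subset I(j)_n$ you get $\mathcal O_Z(-snG)\subset \mathcal O_Z(-F(j)_n)$. So $snG-F(j)_n$ is effective, and $-F(j)_n\ge -snG$. This is exactly the direction you use, correctly, in Step 2 when you write $-F_t\ge -stG$. Lemma \ref{Lemma4} therefore gives only the lower bound $((-sG)^d)=-s^de(m_R)$ for the products. The products are also nonpositive, not nonnegative: for $r=1$ the product is $-e(I_n)/n^d$ by Theorem \ref{Theorem20}. You need the supremum to be finite in order to choose $(m_1,\ldots,m_r)$ within $\epsilon$ of it, and that bound must come from the other side. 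Since $F(j)_n$ is effective, $0\ge -F(j)_n/n$ with both sides nef. Lemma \ref{Lemma4} (or Lemma \ref{NefLem}, which is how the paper begins) then gives that every product is $\le 0$. With this fix your argument is complete.

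\textbf{Comparison with the paper.} The paper works in two stages.
\begin{enumerate}
\item[1)] It first passes to multiples. Using $\frac{-F(j)_{m_jn_j}}{m_jn_j}\ge \frac{-F(j)_{m_j}}{m_j}$, it keeps the products at $(m_1n_1,\ldots,m_rn_r)$ above $c-\epsilon/2$, where $c$ is the supremum.
\item[2)] It then handles remainders via $I(j)_{m_jn_j}I(j)_{i_j}\subset I(j)_{m_jn_j+i_j}$. It keeps the actual remainder divisors in the nef lower bound $\frac{1}{m_jn_j+i_j}(-F(j)_{i_j}-F(j)_{m_jn_j})$.
\end{enumerate}
Its error polynomial $Q$ therefore involves the moving divisors $F(j)_{m_jn_j}/(m_jn_j)$. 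A second application of monotonicity and Lemma \ref{Lemma4} is needed to bound those cross terms uniformly in the $n_j$.

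You instead do the division with remainder in one step and replace $-F(j)_{t_j}$ by the uniform nef lower bound $-sm_jG$. Your comparison divisor is then a combination of two \emph{fixed} nef classes, with coefficients tending to $1$ and $0$. Convergence of its intersection numbers follows immediately from multilinearity.

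The only extra input you use is the uniform containment $m_R^{sn}\subset I(j)_n$. This comes for free: $m_R^s\subset I(j)_1$ gives $m_R^{sn}\subset I(j)_1^n\subset I(j)_n$. The paper's bookkeeping avoids it, using only the finitely many $F(j)_i$ with $i<m_j$.
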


\begin{proof}
Let
$$
S=\left\{\left((\frac{-F(1)_{n_1}}{n_1})^{d_1}\cdot\ldots\cdot (\frac{-F(r)_{n_r}}{n_r})^{d_r}\right)\mid n_1,\ldots,n_r\in \ZZ_{>0}\right\}.
$$
Each $F(i)_{n_i}$ is effective and the restriction of $\frac{-F(1)_{n_1}}{n_1},\ldots, \frac{-F(r)_{n_r}}{n_r}$ to $F(i)_{n_i}$ are nef, so 
$S\subset \RR_{\le0}$ by Lemma \ref{NefLem}. Let $c$ be the least upper bound of the set $S$. Necessarily, $c\le 0$.

Given $\epsilon>0$, there exists $m_1,\ldots,m_r\in \ZZ_{>0}$ such that
$$
 \left((\frac{-F(1)_{m_1}}{m_1})^{d_1}\cdot\ldots\cdot (\frac{-F(r)_{m_r}}{m_r})^{d_r}\right)> c-\frac{\epsilon}{2}.
 $$
 $I(j)_{m_j}^{n_j}\subset I(j)_{m_jn_j}$
 implies
 \begin{equation}\label{eq80'}
 -\frac{1}{m_jn_j}F(j)_{m_jn_j}\ge -\frac{1}{m_j}F(j)_{m_j}
 \end{equation}
 for all $n_j\in \ZZ_{>0}$, so  that for $n_1,\ldots,n_r\in \ZZ_{>0}$,
 \begin{equation}\label{eq71}
 \left((\frac{-F(1)_{m_1n_1}}{m_1n_1})^{d_1}\cdot\ldots\cdot (\frac{-F(r)_{m_rn_r}}{m_rn_r})^{d_r}\right)> c-\frac{\epsilon}{2}
 \end{equation}
  by Lemma \ref{Lemma4}.

Suppose that $m,i\in \ZZ_{>0}$. $I(j)_mI(j)_i\subset I(j)_{m+i}$ implies
\begin{equation}\label{eq5'}
-F(j)_i-F(j)_m\le -F(j)_{m+i}
\end{equation}
and
\begin{equation}\label{eq6'}
\begin{array}{lll}
\frac{-1}{m+i}F(j)_i-\frac{1}{m+i}F(j)_m&=&\frac{-1}{m+i}F(j)_i-(\frac{m}{m+i})(\frac{1}{m}F(j)_m)\\
&=& \frac{-1}{m+i}F(j)_i-(\frac{1}{m}F(j)_m)[\frac{m+i}{m+i}-\frac{i}{m+i}]\\
&=&[\frac{1}{m+i}(-F(j)_i+i(\frac{1}{m}F(j)_m)]-\frac{1}{m}F(j)_m.
\end{array}
\end{equation}
Thus $\frac{1}{m+i}(-F(j)_i+i(\frac{1}{m}F(j)_m))-\frac{1}{m}F(j)_m$ is nef and by (\ref{eq5'}),
\begin{equation}\label{eq7'}
\frac{1}{m+i}\left[-F(j)_i+i\left(\frac{1}{m}F(j)_m\right)\right]-\left(\frac{1}{m}F(j)_m\right)\le \frac{-1}{m+i}F(j)_{m+i}.
\end{equation}

For fixed $i_1,\ldots,i_r$ in $\ZZ$, with $0\le i_j<m_j$ for all $j$, 
there exists a  polynomial 
$$
Q_{i_1,\ldots,i_r}(x_1,y_1,z_1,\ldots,x_r,y_r,z_r)\in \ZZ[x_1,y_1,z_1,\ldots,x_r,y_r,z_r]
$$
  such that 
$$
Q_{i_1,\ldots,i_r}(x_1,y_1,z_1,\ldots,x_r,y_r,z_r)=\sum a_{\alpha_1,\beta_1,\gamma_1,\ldots,\alpha_r,\beta_r,\gamma_r}
x_1^{\alpha_1}y_1^{\beta_1}z_1^{\gamma_1}\cdots x_r^{\alpha_r}y_r^{\beta_r}z_r^{\gamma_r}
$$
with $\alpha_j+\beta_j=d_j$ for all $j$ and $\gamma_1+\cdots+\gamma_r>0$ if $a_{\alpha_1,\beta_1,\gamma_1,\ldots,\alpha_r,\beta_r,\gamma_r}\ne 0$, such that setting 
$$
\overline x_j=F(j)_{i_j}, \overline y_j=\frac{1}{m_jn_j}F(j)_{m_jn_j}, \overline z_j=\frac{1}{m_jn_j+i_j},
$$
we have that
\begin{equation}\label{eq70}
\begin{array}{lll}
&&Q_{i_1,\ldots,i_r}(\overline x_1,\overline y_1,\overline z_1,\ldots,\overline x_r,\overline y_r, \overline z_r)
+\left(\left(-\frac{1}{m_1n_1}F(1)_{m_1n_1}\right)^{d_1}\cdot\ldots\cdot \left(-\frac{1}{m_rn_r}F(r)_{m_rn_r}\right)^{d_r}\right)\\
&=& 
\left(\left(\frac{1}{m_1n_1+i_1}\left[-F(1)_{i_1}+\frac{i_1}{m_1n_1}F(1)_{m_1n_1}\right]-\frac{1}{m_1n_1}F(1)_{m_1n_1}\right)^{d_1}\right.\\
&&\cdot\ldots\cdot
\left.\left(\frac{1}{m_rn_r+i_r}\left[-F(r)_{i_r}+\frac{i_r}{m_rn_r}F(r)_{m_rn_r}\right]-\frac{1}{m_rn_r}F(r)_{m_rn_r}\right)^{d_r}\right)
\\
&\le& \left(\left(-\frac{1}{m_1n_1+i_1}F(1)_{m_1n_1+i_1}\right)^{d_1}\cdot\ldots\left(-\frac{1}{m_rn_r+i_r}F(r)_{m_rn_r+i_1}\right)^{d_r} \right)<c
\end{array}
\end{equation}
where the inequality is by equation (\ref{eq7'}) and Lemma \ref{Lemma4}.

If  $a_{\alpha_1,\beta_1,\gamma_1,\ldots,\alpha_r,\beta_r,\gamma_r}\ne 0$, then the absolute value
$$
\begin{array}{lll}
&&| \overline x_1^{\alpha_1}\overline  y_1^{\beta_1}\overline x_2^{\alpha_2}\overline  y_2^{\beta_2}
\cdots \overline x_r^{\alpha_r}\overline y_r^{\beta_r}|\\
&=&|\left((F(1)_{i_1})^{\alpha_1}\cdot(\frac{1}{m_1n_1}F(1)_{m_1n_1})^{\beta_1}\cdot\ldots\cdot
(F(r)_{i_r})^{\alpha_r}\cdot(\frac{1}{m_rn_r}F(r)_{m_rn_r})^{\beta_r}\right)|\\
&\le& |\left((F(1)_{i_1})^{\alpha_1}\cdot(\frac{1}{m_1}F(1)_{m_1})^{\beta_1}\cdot\ldots\cdot 
(F(r)_{i_r})^{\alpha_r}\cdot(\frac{1}{m_r}F(r)_{m_r})^{\beta_r}\right)|
\end{array}
$$
where the inequality is by (\ref{eq80'}) and Lemma \ref{Lemma4}.
Let 
$$
\lambda=\max\left\{\left|\left((F(1)_{i_1})^{\alpha_1}\cdot(\frac{1}{m_1}F(1)_{m_1})^{\beta_1}\cdot\ldots\cdot 
(F(r)_{i_r})^{\alpha_r}\cdot(\frac{1}{m_r}F(r)_{m_r})^{\beta_r}\right)\right|\right\}
$$ 
where the maximum is over 
$a_j+\beta_j=d_j$ for $1\le j\le r$.

Then whenever $a_{\alpha_1,\beta_1,\gamma_1,\ldots,\alpha_r,\beta_r,\gamma_r}\ne 0$,
$$
|\overline x_1^{\alpha_1}\overline y_1^{\beta_1}\overline z_1^{\gamma_1}\cdots 
\overline x_r^{\alpha_r}\overline y_r^{\beta_r}\overline z_r^{\gamma_r}|\le \lambda \prod_{j=1}^r(\frac{1}{m_jn_j+i_j})^{\gamma_j},
$$
with $\gamma_1+\cdots+\gamma_r>0$. Thus given $\delta>0$, there exists $n^*$ such that $n_j>n^*$ for all $j$ implies 
$$
|a_{\alpha_1,\beta_1,\gamma_1,\ldots,\alpha_r,\beta_r,\gamma_r}\overline x_1^{\alpha_1}\overline y_1^{\beta_1}\overline z_1^{\gamma_1}\cdots 
\overline x_r^{\alpha_r}\overline y_r^{\beta_r}\overline z_r^{\gamma_r}|<\delta
$$
for all $\alpha_1,\beta_1,\gamma_1,\ldots,\alpha_r,\beta_r,\gamma_r$. Choosing $\delta$ sufficiently small, we then ensure that
\begin{equation}\label{eq82'}
|Q_{i_1,\ldots,i_r}(\overline x_1,\overline y_1,\overline z_1,\ldots,\overline x_r,\overline y_r,\overline z_r)|< \frac{\epsilon}{2}
\end{equation}
if $n_j>n^*$ for all $j$. 

Let 
$$
\begin{array}{l}
Q=Q_{i_1,\ldots,i_r}(\overline x_1,\overline y_1,\overline z_1,\ldots,\overline x_r,\overline y_r,\overline z_r),\\
A=\left((-\frac{1}{m_1n_1}F(1)_{m_1n_1})^{d_1}\cdot\ldots\cdot (-\frac{1}{m_rn_r}F(r)_{m_rn_r})^{d_r}\right),\\
B=\left((-\frac{1}{m_1n_1+i_1}F(1)_{m_1n_1+i_1})^{d_1}\cdot\ldots\cdot (-\frac{1}{m_rn_r+i_r}F(r)_{m_rn_r+i_r})^{d_r}\right).
\end{array}
$$
Then
$$
c-\epsilon =-\frac{\epsilon}{2}+(c-\frac{\epsilon}{2})\le Q+A\le B
$$
where the first inequality is by (\ref{eq82'}) and (\ref{eq71}) and the second inequality is by (\ref{eq70})
 if $n_j>n^*$ for all $j$, and $1\le i_j<m_j$ for $1\le j\le r$, showing the existence of the limit of Theorem \ref{Theorem70}.
\end{proof}

In general, we do not have the desirable condition that $-\frac{1}{m}F_ m\le -\frac{1}{m+1}F_{m+1}$, which would simplify the calculations of the proof of Theorem \ref{Theorem70}. We give a simple example of a graded filtration $\mathcal I=\{I_n\}$ of $m_R$-primary ideals where this condition fails. 

For $m\in \ZZ_{>0}$, define
$$
\sigma(m)=\left\{\begin{array}{ll} \frac{m}{2}&\mbox{ if $m$ is even}\\
\frac{m+1}{2}&\mbox{ if $m$ is odd}.\\
\end{array}\right.
$$
for $m,n\in\ZZ_{>0}$, we have that
$\sigma(m)+\sigma(n)\ge\sigma(m+n)$
and
$\sigma(m+1)\ge \sigma(m)$.
Let $R$ be a one dimensional regular local ring, and $t$ be a generator of $m_R$. Let $\mathcal I=\{I_n\}$ where  $I_n=(t^{\sigma(n)})$, an ideal in $R$.
$\mathcal I$ is a graded filtration. If $m$ is even and positive, then 
$$
\sigma(m)(m+1)=\frac{m}{2}(m+1)<(\frac{m}{2}+1)m=\sigma(m+1)m.
$$
Thus $I_m^{m+1}\not\subset I_{m+1}^m$ if $m$ is even and positive. Since $R$ is regular of dimension 1, $\mbox{BirMod}(R)=\{\mbox{Spec}(R)\}$,
so $\mathcal O_{Y_m}(-F_m)=I_m$  for all $m$. Thus $-\frac{1}{m}F_m\not\le -\frac{1}{m+1}F_{m+1}$  if $m$ is even.

\section{Multiplicities of graded $m_R$-families}\label{SecMult}

We give a  proof of the existence of the multiplicity $e(\mathcal I)$ of a graded $m_R$-family which does not rely on the theory of Okounkov bodies, but only uses intersection theory.

\begin{Theorem}\label{Theorem5}(Theorem \ref{multlim} in the introduction)
Let $R$ be a $d$-dimensional  Noetherian local ring and $\mathcal I=\{I_n\}$ be a graded $m_R$-family. Then
$$
\lim_{n\rightarrow\infty}\frac{e(I_n)}{n^d}
$$
exists.
\end{Theorem}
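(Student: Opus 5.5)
The plan is to reduce Theorem \ref{Theorem5} directly to Theorem \ref{Theorem70} with $r=1$, $d_1=d$, using the intersection-theoretic formula of Theorem \ref{Theorem20}. By the remark in the introduction, the case $d=0$ is trivial (then $e(I_n)=e(R)$ for all $n$), so assume $d\ge 1$.

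For each $n>0$ let $Y_n\in \mbox{BirMod}(R)$ be the blowup of $I_n$. Define the effective Cartier divisor $F_n$ by $I_n\mathcal O_{Y_n}=\mathcal O_{Y_n}(-F_n)$. The blowup of an $m_R$-primary ideal is a proper birational morphism to $\mbox{Spec}(R)$ on which $I_n\mathcal O_{Y_n}$ is invertible. So Theorem \ref{Theorem20} gives
$$
e(I_n)=-((I_n\mathcal O_{Y_n})^d)=-((-F_n)^d).
$$
By multilinearity of the intersection product on $\mbox{Div}(Y_n)_{\RR}$, it follows that
$$
\frac{e(I_n)}{n^d}=-\left(\left(\frac{-F_n}{n}\right)^d\right).
$$
One point needs checking: the convention in Theorem \ref{Theorem20} identifies $((I\mathcal O_Y)^d)$ with the self-intersection of the divisor $-F$, where $I\mathcal O_Y=\mathcal O_Y(-F)$. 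This is the convention used throughout the paper, for instance in (\ref{eqIn11}), so I would simply make it explicit.

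Now apply Theorem \ref{Theorem70} with $r=1$, $\mathcal I(1)=\mathcal I$ and $d_1=d$. It states that
$$
\lim_{n\rightarrow\infty}\left(\left(\frac{-F_n}{n}\right)^d\right)
$$
exists and equals $\sup_n ((-F_n/n)^d)$, which is a nonpositive real number by Lemma \ref{NefLem}. Negating, $\lim_{n\rightarrow\infty} e(I_n)/n^d$ exists and equals $-(\mathcal I^d)$. Equivalently, it equals $\inf_n e(I_n)/n^d$, which also gives a finite nonnegative value.

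There is essentially no obstacle beyond what has already been done. All the substance is in Theorem \ref{Theorem70}: the subadditivity argument that handles the residues $i_j$ modulo $m_j$, and the bounds from Lemma \ref{Lemma4}. The only step needing care is the identification of $e(I_n)$ with an intersection number computed on the specific model $Y_n$. Theorem \ref{Theorem20} allows any birational proper $Y$ on which $I_n\mathcal O_Y$ is invertible, and the products in Theorem \ref{Theorem70} are invariant under pullback to a common model in $\mbox{BirMod}(R)$. So the two quantities agree term by term, and the limit of one is the negative of the limit of the other.
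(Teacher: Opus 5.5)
Your proposal is correct and is essentially the paper's own proof. You apply Theorem~\ref{Theorem70} with $r=1$, $d_1=d$ to get existence of $\lim_{n\to\infty}\bigl((-F_n/n)^d\bigr)$, and then transfer this to $e(I_n)/n^d$ via the identity $e(I_n)=-((-F_n)^d)$ of Theorem~\ref{Theorem20} on the blowup $Y_n$ of $I_n$.
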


\begin{proof} If $d=0$ then $e(R/I_n)=e(R)$ for all $n$ so that $e(\mathcal I)=e(I)$. Suppose that $d\ge 1$.
Take $\mathcal I(1)=\mathcal I$ and $r=1$ in
 Theorem \ref{Theorem70}, to see that the limit 
 $$
 ((\mathcal I)^d)=\lim_{n\rightarrow \infty}\left(\left(\frac{-F_n}{n}\right)^d\right)
 $$ exists. 
The theorem now follows from 
the identity 
$$
e(I_n)=-\left((I_n\mathcal O_{Y_n})^d\right)=-\left((-F_n)^d\right)
$$
 of Theorem \ref{Theorem20}.
 \end{proof}

\begin{Corollary}\label{Cor73} Let $R$ be a $d$-dimensional Noetherian local ring, $\mathcal I=\{I_n\}$ be a graded $m_R$-family and let $M$ be a finitely generated $R$-module. Let $\Lambda$ be the set of minimal prime ideals $P$ of $R$ such that $\dim R/P=d$. For $P\in \Lambda$, let
$\mathcal I(R/P)=\{I_n(R/P)\}$, a graded family of $m_R$-primary ideals on $R/P$.
Then
$$
\lim_{n\rightarrow\infty}\frac{e(I_n;M)}{n^d}=\sum_{P\in\Lambda}e(\mathcal I(R/P))\ell_{R_P}(M_P).
$$ 
In particular, the limit $\lim_{n\rightarrow\infty}\frac{e(I_n;M)}{n^d}$ exists.
\end{Corollary}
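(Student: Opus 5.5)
We will use the associativity formula for multiplicities, applied to each ideal $I_n$ separately, and then pass to the limit termwise using Theorem \ref{Theorem5}. The case $d=0$ is trivial as noted in the introduction, so we assume $d\ge 1$.

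\textbf{Step 1: associativity for each $n$.} For each fixed $n$, the ideal $I_n$ is $m_R$-primary. By the associativity formula (Theorem 11.2.4 \cite{HS}),
$$
e(I_n;M)=\sum_{P\in\Lambda}\ell_{R_P}(M_P)\,e(I_n(R/P)),
$$
where $e(I_n(R/P))$ is the multiplicity of the $m_{R/P}$-primary ideal $I_n(R/P)$ on the $d$-dimensional local ring $R/P$. The index set $\Lambda$ is finite and independent of $n$, and so are the coefficients $\ell_{R_P}(M_P)$.

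\textbf{Step 2: each $\mathcal I(R/P)$ is a graded family on $R/P$.} For $P\in\Lambda$, the family $\mathcal I(R/P)$ is a graded $m_{R/P}$-family on $R/P$. Indeed, $I_0(R/P)=R/P$, and $I_mI_n\subset I_{m+n}$ passes to images. Moreover, $I_n(R/P)$ is $m_{R/P}$-primary for $n>0$, because $m_R^{sn}\subset I_n$ for some $s$.

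\textbf{Step 3: pass to the limit.} Since $\dim R/P=d$, Theorem \ref{Theorem5}, applied to the $d$-dimensional Noetherian local ring $R/P$, shows that
$$
e(\mathcal I(R/P))=\lim_{n\rightarrow\infty}\frac{e(I_n(R/P))}{n^d}
$$
exists as a limit, not merely as a limsup. Dividing the formula of Step 1 by $n^d$ expresses $e(I_n;M)/n^d$ as a finite linear combination, with fixed coefficients, of convergent sequences. Hence the limit exists and equals $\sum_{P\in\Lambda}e(\mathcal I(R/P))\ell_{R_P}(M_P)$.

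\textbf{Expected obstacle.} There is no real obstacle. The only point to check is that the associativity formula applies with the ring $R$ itself rather than $\hat R$. It does: that formula holds for an arbitrary finitely generated module over a Noetherian local ring and an $m_R$-primary ideal, summing over the primes $P$ with $\dim R/P=d$. Also, $e(I_n(R/P))$ computed over $R/P$ coincides with $e(I_n;R/P)$ computed over $R$, since lengths over $R$ and $R/P$ agree for $R/P$-modules.
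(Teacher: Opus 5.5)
Your proposal is correct and follows the same route as the paper: apply the associativity formula (Theorem 11.2.4 of \cite{HS}) to each $I_n$, then pass to the limit termwise, using Theorem \ref{Theorem5} on each $d$-dimensional ring $R/P$, $P\in\Lambda$. Your extra checks are accurate: $\mathcal I(R/P)$ is a graded $m_{R/P}$-family, and multiplicities computed over $R/P$ agree with those computed over $R$. These are details the paper leaves implicit.
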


\begin{proof} By the associativity formula for multiplicities, Theorem 11.2.4 \cite{HS},
$$
e(I_n;M)=\sum_{P\in \Lambda}e(I_n(R/P))\ell_{R/P}(M_P)
$$
for all $n$. Since  the limit
$$
e(\mathcal I(R/P))=\lim_{n\rightarrow\infty}\frac{e(I_n(R/P))}{n^d}
$$
exists
by Theorem \ref{Theorem5}, we have that the desired identity holds.
\end{proof}

\vskip .2truein

The twist $\mathcal I^{(c)}$ of a filtration $\mathcal I$ by $c\in \RR_{>0}$ is defined in equation (\ref{eqIn3}) of the introduction.
 
  \begin{Lemma}\label{Prop71}
  Let $\mathcal I(1),\ldots,\mathcal I(d)$ be graded $m_R$-families and $c_1,\ldots,c_d\in \RR_{>0}$. Then
  $$
  \left((\mathcal I(1)^{(c_1)})\cdot\ldots\cdot (\mathcal I(d)^{(c_d})\right) =
  c_1c_2\cdots c_d\left(\mathcal I(1)\cdot\ldots\cdot \mathcal I(d)\right).
  $$
  \end{Lemma}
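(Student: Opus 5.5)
The plan is to reduce the statement directly to Theorem \ref{Theorem70}, applied with $r=d$ and $d_1=\cdots=d_d=1$, together with the multilinearity of the intersection product. Write $k_j(n)=\lceil c_jn\rceil$. The blowup of the ideal $(I(j)^{(c_j)})_n=I(j)_{k_j(n)}$ is $Y(j)_{k_j(n)}$, and the associated Cartier divisor is $F(j)_{k_j(n)}$. So the left hand side of the lemma is the limit as $n\rightarrow\infty$ of
$$
\left(\frac{-F(1)_{k_1(n)}}{n}\cdot\ldots\cdot\frac{-F(d)_{k_d(n)}}{n}\right).
$$

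A caveat is needed here. If the $\mathcal I(j)$ are filtrations, each $\mathcal I(j)^{(c_j)}$ is again a graded $m_R$-family, since $\lceil c_jm\rceil+\lceil c_jn\rceil\ge\lceil c_j(m+n)\rceil$. For general families I do not see that this holds. I will therefore take the left hand side to mean the limit of the displayed diagonal products (this is the same as the Theorem \ref{Theorem70} limit when the twist is a graded family), and the argument will show that this limit exists.

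By multilinearity, the displayed product equals
$$
\left(\prod_{j=1}^d\frac{k_j(n)}{n}\right)\left(\frac{-F(1)_{k_1(n)}}{k_1(n)}\cdot\ldots\cdot\frac{-F(d)_{k_d(n)}}{k_d(n)}\right).
$$
Since $c_jn\le k_j(n)<c_jn+1$, the first factor tends to $c_1c_2\cdots c_d$.

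For the second factor, Theorem \ref{Theorem70} says that
$$
\lim_{n_1,\ldots,n_d\rightarrow\infty}\left(\frac{-F(1)_{n_1}}{n_1}\cdot\ldots\cdot\frac{-F(d)_{n_d}}{n_d}\right)=(\mathcal I(1)\cdot\ldots\cdot\mathcal I(d))
$$
is a genuine multi-index limit. Concretely, for every $\epsilon>0$ there is $n^*$ such that the product is within $\epsilon$ of the limit whenever all $n_j>n^*$. Since $c_j>0$, each $k_j(n)\rightarrow\infty$ as $n\rightarrow\infty$, so for $n$ large all $k_j(n)>n^*$. Hence the second factor converges to $(\mathcal I(1)\cdot\ldots\cdot\mathcal I(d))$. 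The product of two convergent sequences converges to the product of the limits, which gives the formula and in particular the existence of the left hand side.

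The only point needing care is this use of Theorem \ref{Theorem70}. The indices $k_j(n)$ move along a particular diagonal-type path rather than independently, so one must check that the limit there is uniform in the sense above. It is: the proof of Theorem \ref{Theorem70} produces a single $n^*$ working simultaneously for all $n_j>n^*$, with the lower bound $c-\epsilon$ and the upper bound $c$ given by the supremum. So restricting to any sequence along which all indices tend to infinity is legitimate.
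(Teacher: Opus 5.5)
Your proof is correct and follows the paper's own argument: it pulls out the scalars $\lceil c_jn\rceil/n$ by multilinearity and then uses that Theorem \ref{Theorem70} gives a genuine multi-index limit, which can be evaluated along indices $\lceil c_jn_j\rceil\rightarrow\infty$; the only difference is that the paper lets $n_1,\ldots,n_d$ vary independently, and your argument covers that case verbatim. Your caveat is also correct: when $\mathcal I(j)$ is not a filtration, $\mathcal I(j)^{(c_j)}$ need not be a graded family, a point the paper's one-line proof passes over.
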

  
  \begin{proof} By Theorem \ref{Theorem70},
  $$
  \begin{array}{l}
  \displaystyle\left((\mathcal I(1)^{(c_1)})\cdot\ldots\cdot (\mathcal I(d)^{(c_d)})\right)   
  =\lim_{n_1,\ldots,n_d\rightarrow\infty}\left((\frac{-F(1)_{\lceil c_1n_1\rceil}}{n_1})\cdot\ldots\cdot (\frac{-F(d)_{\lceil c_dn_d\rceil}}{n_d})\right)\\
  =\displaystyle\lim_{n_1,\ldots,n_d\rightarrow\infty}\prod_{j=1}^d\frac{\lceil c_jn_j\rceil}{n_j}\left((\frac{-F(1)_{\lceil c_1n_1\rceil}}{\lceil c_1n_1\rceil})\cdot\ldots\cdot
  (\frac{-F(d)_{\lceil c_dn_d\rceil}}{\lceil c_dn_d\rceil})\right)\\
   = c_1c_2\cdots c_d\left(\mathcal I(1)\cdot\ldots\cdot \mathcal I(d)\right).
       \end{array}
  $$

  \end{proof}
  
  The product of graded families $\mathcal I(1)\mathcal I(2)\cdots\mathcal I(r)$ is defined in equation (\ref{eqIn4}) of the introduction.  
 We now give the proof of Theorem \ref{Minkcoef*} and formulas (\ref{MixInt}) and (\ref{eqIn15}) of the introduction, establishing mixed multiplicities of graded $m_R$-families.



 Let notation be as in Theorem \ref{Theorem70}.
$$
\begin{array}{l}
\displaystyle\lim_{m\rightarrow\infty} \frac{e(I(1)_{mn_1}\cdots\ I(r)_{mn_r})}{m^d}=
\lim_{m\rightarrow\infty}-\left(\left(\frac{-F(1)_{mn_1}}{m}-\frac{-F(2)_{mn_2}}{m}+\cdots+\frac{-F(r)_{mn_r}}{m}\right)^d\right)\\
= \displaystyle\lim_{m\rightarrow\infty}\left(\sum_{d_1+\ldots+d_r=d}\frac{-d!}{d_1!\cdots d_r!}\left(\left(\frac{-F(1)_{mn_1}}{m}\right)^{d_1}\cdot\ldots\cdot
\left(\frac{-F(r)_{mn_r}}{m}\right)^{d_r}\right)\right)\\
= \displaystyle\lim_{m\rightarrow\infty}\left(\sum_{d_1+\ldots+d_r=d}\frac{-d!}{d_1!\cdots d_r!}\left(\left(\frac{-F(1)_{mn_1}}{mn_1}\right)^{d_1}\cdot\ldots\cdot
\left(\frac{-F(r)_{mn_r}}{mn_r}\right)^{d_r}\right)n_1^{d_1}\cdots n_r^{d_r}\right)\\
= \displaystyle\sum_{d_1+\ldots+d_r=d}\frac{-d!}{d_1!\cdots d_r!} \left(\lim_{m\rightarrow\infty}\left(\left(\frac{-F(1)_{mn_1}}{mn_1}\right)^{d_1}\cdot\ldots\cdot
\left(\frac{-F(r)_{mn_r}}{mn_r}\right)^{d_r}\right)\right)n_1^{d_1}\cdots n_r^{d_r}\\
= \displaystyle\sum_{d_1+\ldots+d_r=d}\frac{-d!}{d_1!\cdots d_r!} \left(\lim_{m\rightarrow\infty}\left(\left(\frac{-F(1)_{m}}{m}\right)^{d_1}\cdot\ldots\cdot
\left(\frac{-F(r)_{m}}{m}\right)^{d_r}\right)\right)n_1^{d_1}\cdots n_r^{d_r}
\end{array}
$$
by multilinearity of intersection products and Theorem \ref{Theorem70}. 

Thus Theorem \ref{Minkcoef*} and formula (\ref{MixInt}) are true. Formula (\ref{eqIn15}) now follows from Theorem \ref{MixMultIdeal}.
We also establish mixed multiplicities of finitely generated modules with respect to graded $m_R$-families. 
 
\begin{Theorem}\label{MinkcoefM} Let $R$ be a $d$-dimensional noetherian local ring, $\mathcal I(1)=\{I(1)_n\},\ldots, \mathcal I(r)=\{I(r)_n\}$ be graded $m_R$-families and $M$ be a finitely generated $R$-module. Then 
$$
\lim_{m\rightarrow\infty} \frac{e(I(1)_{mn_1}\cdots\ I(r)_{mn_r};M)}{m^d}
$$
is a homogeneous polynomial of degree $d$ in $n_1,\ldots,n_r$.
\end{Theorem}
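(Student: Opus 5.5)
The plan is to reduce to Theorem \ref{Minkcoef*} via the associativity formula, as in Corollary \ref{Cor73}. Fix $n_1,\ldots,n_r\in\ZZ_{\ge 0}$ and let $\Lambda$ be the set of minimal primes $P$ of $R$ with $\dim R/P=d$. For each $m$, the ideal $K_m:=I(1)_{mn_1}\cdots I(r)_{mn_r}$ is $m_R$-primary (when some $n_j=0$ the factor is $I(j)_0=R$). By the associativity formula (Theorem 11.2.4 \cite{HS}),
$$
e(K_m;M)=\sum_{P\in\Lambda}e(K_m(R/P))\,\ell_{R_P}(M_P).
$$
Moreover $K_m(R/P)=(I(1)_{mn_1}(R/P))\cdots(I(r)_{mn_r}(R/P))$, and each $\mathcal I(j)(R/P)=\{I(j)_n(R/P)\}$ is a graded $m_{R/P}$-family on the $d$-dimensional local ring $R/P$.

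Next apply Theorem \ref{Minkcoef*} on each $R/P$, $P\in\Lambda$. It gives a homogeneous real polynomial $P_P(n_1,\ldots,n_r)$ of degree $d$ with
$$
\lim_{m\rightarrow\infty}\frac{e(K_m(R/P))}{m^d}=P_P(n_1,\ldots,n_r)
$$
for all $n_1,\ldots,n_r\in\ZZ_{\ge 0}$. Dividing the associativity identity by $m^d$ and taking limits term by term (a finite sum) shows that the desired limit exists. It equals $\sum_{P\in\Lambda}\ell_{R_P}(M_P)P_P(n_1,\ldots,n_r)$, which is a homogeneous polynomial of degree $d$ (or zero). If $d=0$ everything is constant and the statement is trivial, as in the paper's convention.

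The only point needing care is that the polynomial in Theorem \ref{Minkcoef*} is independent of $(n_1,\ldots,n_r)$, i.e. a single polynomial works for all tuples. This is exactly how that theorem is stated, and its proof via (\ref{MixInt}) makes the coefficients the explicit limits $-\frac{d!}{d_1!\cdots d_r!}(\mathcal I(1)^{d_1}\cdot\ldots\cdot\mathcal I(r)^{d_r})$ computed on $R/P$. One should also note that the proof of Theorem \ref{Minkcoef*} handles $n_j=0$ correctly. When $n_j=0$ the factor $I(j)_0=R$ contributes $F(j)_0=0$, so only the monomials with $d_j=0$ survive, consistent with the polynomial evaluated at $n_j=0$. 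The coefficients of the resulting polynomial are then the mixed multiplicities of $M$, namely $\sum_{P}\ell_{R_P}(M_P)\,e(\mathcal I(1)(R/P)^{[d_1]},\ldots,\mathcal I(r)(R/P)^{[d_r]})$.
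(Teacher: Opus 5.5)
Your proposal is correct and follows the paper's own proof. Use the associativity formula, as in Corollary \ref{Cor73}, to write the limit as $\sum_{P}\ell_{R_P}(M_P)\lim_{m}e(I(1)_{mn_1}\cdots I(r)_{mn_r}(R/P))/m^d$ over the minimal primes $P$ with $\dim R/P=d$, and then apply Theorem \ref{Minkcoef*} on each $R/P$. Your remarks on tuples with some $n_j=0$ and on the resulting polynomial possibly being zero are handled correctly, and they cover points the paper passes over without comment.
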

The coefficient of $n_1^{d_1}\cdots n_r^{d_r}$ of the above polynomial is the  mixed multiplicity 
$$
e(\mathcal I(1)^{[d_1]},\ldots,\mathcal I(r)^{[d_r]};M)
$$ 
of $M$.

\begin{proof} By Corollary \ref{Cor73},
$$
\begin{array}{lll}
\displaystyle\lim_{m\rightarrow\infty} \frac{e(I(1)_{mn_1}\cdots\ I(r)_{mn_r};M)}{m^d}&=&\displaystyle \lim_{m\rightarrow\infty}\frac{1}{m^d}\left(
\sum \ell_{R_P}(M_P)(e(I(1)_{mn_1}\cdots\ I(r)_{mn_r}R/P)\right)\\
&=& \displaystyle\sum \ell_{R_P}(M_P)\left(\lim_{m\rightarrow\infty}\frac{e(I(1)_{mn_1}\cdots\ I(r)_{mn_r}R/P)}{m^d}\right)
\end{array}
$$
where the sums are over the minimal primes $P$ of $R$ such that $\dim R/P=d$. The resulting expression is a homogeneous polynomial of degree $d$ by Theorem \ref{Minkcoef*}.
\end{proof}

\section{A comparison of volume and multiplicity on Noetherian local rings}\label{SecVolMult}

\begin{Proposition}\label{PropVUB} Suppose that $R$ is a $d$-dimensional Noetherian local ring, $\mathcal I=\{I_n\}$ is a graded $m_R$-family and $M$ is a finitely generated $R$-module. Then
$$
{\rm vol}(\mathcal I;M)\le e(\mathcal I;M).
$$
\end{Proposition}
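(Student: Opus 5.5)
Fix $p\in\ZZ_{>0}$ and compare $\ell(M/I_nM)$ with $\ell(M/I_p^kM)$. This needs only the containment $I_p^k\subset I_{pk}$, which holds because $\mathcal I$ is a graded family, together with the fact that $e(I_p;M)$ is the leading coefficient of the Hilbert–Samuel function of $M$ with respect to $I_p$.

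The first step is to reduce to multiples of $p$ up to a bounded error. Write $n=kp+i$ with $0\le i<p$. Since $I_i$ is $m_R$-primary (or $I_0=R$), we have $I_{kp}I_i\subset I_n$. Rather than attempt a product bound, I use the cruder observation that $m_R^{s}\subset I_i$ for some fixed $s$ valid for all $0\le i<p$. Then
$$
I_p^k m_R^s\subset I_p^kI_i\subset I_{kp}I_i\subset I_n .
$$
This gives
$$
\ell(M/I_nM)\le \ell(M/I_p^km_R^sM)\le \ell(M/I_p^kM)+\ell(I_p^kM/I_p^km_R^sM).
$$
The second term is at most $\mu(I_p^kM)\cdot\ell(R/m_R^s)$, where $\mu$ denotes the minimal number of generators. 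Since $\mu(I_p^kM)$ is the value of a Hilbert function of the fiber module $\oplus I_p^kM/m_RI_p^kM$, it is $O(k^{d-1})$. So the second term is $o(k^d)$, hence $o(n^d)$.

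Next I divide by $n^d$ and take the limsup. Because $n\ge kp$ and $\lim_k d!\,\ell(M/I_p^kM)/k^d=e(I_p;M)$, we get
$$
\limsup_{n\to\infty} d!\frac{\ell(M/I_nM)}{n^d}\le \limsup_{k\to\infty} d!\frac{\ell(M/I_p^kM)}{(kp)^d}=\frac{e(I_p;M)}{p^d}.
$$
This holds for every $p$. Letting $p\to\infty$ and using the existence of the limit $e(\mathcal I;M)=\lim_p e(I_p;M)/p^d$ from Corollary \ref{Cor73} gives ${\rm vol}(\mathcal I;M)\le e(\mathcal I;M)$.

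The main obstacle is controlling the remainder term $i$ uniformly. I handle it with the fixed power $m_R^s$ and the $O(k^{d-1})$ bound on the number of generators of $I_p^kM$. If that bound is awkward to state, an alternative is to take $c$ with $m_R^{c}\subset I_p$ and use $I_p^{k+c'}\subset I_p^km_R^s$ for a suitable constant $c'$. One can take $c'=s$, since $I_p^{s}\subset m_R^{s}$ because $I_p\subset m_R$. This gives $I_p^{k+s}\subset I_p^km_R^s\subset I_n$, and hence
$$
\ell(M/I_nM)\le\ell(M/I_p^{k+s}M).
$$
Since $(k+s)^d/(kp)^d\to 1/p^d$, this bypasses the generator count entirely. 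I would use this cleaner version.
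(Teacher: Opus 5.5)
Your proof is correct, and it takes a genuinely different route from the paper's.

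The paper's argument runs as follows. It passes to $\hat R$ and uses a prime filtration of $M$ to get $\ell(M/I_nM)\le\sum_i a_i\,\ell((R/Q_i)/I_n(R/Q_i))$. It then discards the components with $\dim R/Q_i<d$ and applies the volume $=$ multiplicity formula of Theorem \ref{volmult} on each complete domain $R/Q_i$. It finishes with the associativity formula of Corollary \ref{Cor73}.

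You instead compare $I_n$ directly with powers of a single ideal. Write $n=kp+i$, choose $s$ with $m_R^s\subset I_i$ for $0\le i<p$, and note $I_p\subset m_R$. The chain $I_p^{k+s}\subset I_p^km_R^s\subset I_{kp}I_i\subset I_n$ gives $\ell(M/I_nM)\le\ell(M/I_p^{k+s}M)$, hence ${\rm vol}(\mathcal I;M)\le e(I_p;M)/p^d$ for every $p$.

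What your approach buys:
\begin{itemize}
\item It uses only the Hilbert--Samuel asymptotics of $M$ with respect to one $m_R$-primary ideal. So it avoids Theorem \ref{volmult}, whose proof rests on Okounkov bodies. That fits the paper's stated aim of avoiding volumes and Okounkov bodies better than the paper's own proof of this proposition does.
\item It gives the sharper bound ${\rm vol}(\mathcal I;M)\le\inf_p e(I_p;M)/p^d$. You therefore do not even need the existence of the limit from Corollary \ref{Cor73}, since $\inf_p\le\limsup_p$ already gives the claim.
\end{itemize}

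Your generator-count variant is also valid, because the fiber cone of $I_p$ has dimension at most $d$. The $I_p^{k+s}$ version is cleaner, as you say.

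What the paper's route buys in exchange is the component-wise bound ${\rm vol}(\mathcal I;M)\le\sum_i\ell_{R_{Q_i}}(M_{Q_i})\,{\rm vol}(\mathcal I(R/Q_i))$. Combined with volume $=$ multiplicity on each component, this shows that any gap between volume and multiplicity, as in Example \ref{VolEx}, arises entirely in the prime-filtration step.
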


\begin{proof} Volume and multiplicity are preserved if we replace $R$ with its $m_R$-adic completion $\hat R$,  $\mathcal I$ with $\hat{\mathcal I}=\{\hat{I_n}\}$ and $M$ with $\hat M$. Thus we may assume that $R$ is complete. 
Let 
$$
0=M_0\subset M_1\subset \cdots \subset M_n=M
$$
be a prime filtration of $M$, so that $M_{i+1}/M_i\cong R/P_i$ where $P_i$ is a prime ideal of $R$ for $0\le i\le n-1$. We then have short exact sequences
\begin{equation}\label{eqUB2}
\begin{array}{l}
0\rightarrow M_i\rightarrow M_{i+1}\rightarrow R/P_{i+1}\rightarrow 0\mbox{ for $2\le i\le n-1$ and}\\
0\rightarrow R/P_1\rightarrow M_2\rightarrow R/P_2\rightarrow 0.
\end{array}
\end{equation}
Tensoring (\ref{eqUB2}) with $R/I_n$, we have right exact sequences
$$
\begin{array}{l}
M_i/I_nM_i\rightarrow M_{i+1}/I_nM_{i+1}\rightarrow (R/P_{i+1})/I_n(R/P_{i+1})\rightarrow 0\mbox{ for $2\le i\le n-1$ and}\\
(R/P_1)/I_n(R/P_1)\rightarrow M_2/I_nM_2\rightarrow (R/P_2)/I_n(R/P_2)\rightarrow 0.
\end{array}
$$
Thus
$$
\ell(M/I_nM)\le \sum_{i=1}^n \ell((R/P_i)/I_n(R/P_i)).
$$
Let $\{Q_1,\ldots, Q_t\}$ be the distinct prime ideals appearing in  (\ref{eqUB2}), ordered so that $\dim R/Q_i=d$ for $1\le i\le s$ and $\dim R/Q_i<d$ for $s<i\le t$.
Let $a_i$ be the number of times $R/Q_i$ appears in \ref{eqUB2}. Then 

\begin{equation}\label{eqUB5}
\begin{array}{lll}
\ell(M/I_nM)&\le& \sum_{i=1}^ta_i\ell((R/Q_i)/I_n(R/Q_i))\\
&=&\sum_{i=1}^s\ell_{R_{Q_i}}(M_{Q_i})\ell((R/Q_i)/I_n(R/Q_i))
+\sum_{i=s+1}^ta_i\ell((R/Q_i)/I_n(R/Q_i))
\end{array}
\end{equation}
where the equality on the second line is obtained by localizing at $Q_i$ for $1\le i\le s$. Thus
$$
\begin{array}{l}
\displaystyle{\rm vol}(\mathcal I;M)=\limsup_{n\rightarrow\infty}d!\frac{\ell(M/I_nM)}{n^d}\\
\le 
\displaystyle\limsup_{n\rightarrow\infty}\frac{d!}{n^d}\left[\sum_{i=1}^s\ell_{R_{Q_i}}(M_{Q_i})\ell((R/Q_i)/I_n(R/Q_i))
+\sum_{i=s+1}^ta_i\ell((R/Q_i)/I_n(R/Q_i))\right]\\
\le \displaystyle
\sum_{i=1}^sd!\ell_{R_{Q_i}}(M_{Q_i})\limsup_{n\rightarrow\infty}\frac{\ell((R/Q_i)/I_n(R/Q_i))}{n^d}
+\sum_{i=s+1}^td!a_i\limsup_{n\rightarrow\infty}\frac{\ell((R/Q_i)/I_n(R/Q_i))}{n^d}\\
=\sum_{i=1}^s\ell_{R_{Q_i}}(M_{Q_i}){\rm vol}(\mathcal I(R/Q_i)) = \sum_{i=1}^s\ell_{R_{Q_i}}(M_{Q_i})e(\mathcal I(R/Q_i)) \\
=e(\mathcal I;M)
\end{array}
$$
where the inequality on the second line is by (\ref{eqUB5}),  the second equality on the fourth line is by Theorem \ref{volmult} and the equality on the last line is by Corollary \ref{Cor73}.
\end{proof}

It is possible for the volume to be strictly less than the multiplicity, as is shown in the following example. 

\begin{Example}\label{VolEx} There exists a Noetherian local ring $R$ and a graded $m_R$-filtration $\mathcal J$ on $R$ such that
 $$
 {\rm vol}(\mathcal J)\ne e(\mathcal J).
 $$
  In this example, we necessarily have that $\dim N(\hat R)=\dim R$. Further, ${\rm vol}(\mathcal J)$ does not exist as a limit, but only as a limsup.
 \end{Example}
 
\begin{proof}
 In Example 5.3 \cite{C2},  a graded $m_R$-filtration $\mathcal I$ such that $\mbox{vol}(\mathcal I)$ does not exist as a limit is constructed. The example does satisfy the good condition that volume = multiplicity holds; that is, ${\rm vol}(\mathcal I)=e(\mathcal I)$. We modify this example to find a filtration $\mathcal J$ such that ${\rm vol}(\mathcal J)\ne e(\mathcal J)$, and volume does not exist as a limit.

Let $k$ be a field and $k[[x_1,\ldots,x_d,y]]$ be a power series ring over $k$. Let 
$$
R=k[[x_1,\ldots,x_d,y]]/(y^2).
$$
 Let $\overline x_1,\ldots,\overline x_d,\overline y$ be the respective classes of $x_1,\ldots,x_d,y$ in $R$. In equation (22) of \cite{C2}, a function $\sigma(n)$ is defined which has the properties that
$\sigma(m)\le\sigma(n)$ for all $m\le n$, $0\le \sigma(n)\le \frac{n}{2}$ for all $n$ and there are arbitrarily large $n$ such that $\frac{\sigma(n)}{n}$ comes arbitrarily close to 0 and there are arbitrarily large $n$ such that $\frac{\sigma(n)}{n}$ comes arbitrarily closed to $\frac{1}{2}$. 

Let $N_i$ be the set of monomials of degree $i$ in $\overline x_1,\ldots,\overline x_d$. Define $m_R$-primary ideals $J_n$ in $R$ by 
$$
J_n=(N_n,\overline y N_{n-\sigma(n)-\lceil \frac{n}{4}\rceil})\mbox{ for }n\ge 1
$$
and $J_0=R$. $\mathcal J=\{J_n\}$ is a graded $m_R$-filtration since $\sigma(m)\le \sigma(n)$ for $m\le n$. $R/J_n$ has a $k$-basis consisting of 
$$
\{N_i\mid i<n\}\mbox{ and }\{\overline yN_j\mid j<n-\sigma(n)-\lceil \frac{n}{4}\rceil\}.
$$
Thus
$$
\ell(R/J_n)=\binom{n+d-1}{d}+\binom{n-\sigma(n)-\lceil \frac{n}{4}\rceil+d-1}{d}
$$
for all $n\ge 1$, so
$$
\begin{array}{lll}
{\rm vol}(\mathcal J)&=&
\displaystyle\limsup_{n\rightarrow \infty}d!\frac{\ell(R/J_n)}{n^d}=\limsup_{n\rightarrow\infty}\left(\frac{n^d+(n-\sigma(n)-\lceil \frac{n}{4}\rceil)^d}{n^d}\right)\\
&=& \displaystyle\limsup_{n\rightarrow\infty} \left(1+(1-\frac{\sigma(n)}{n}-\lceil\frac{n}{4}\rceil\frac{1}{n})^d\right)= 1+\left(\frac{3}{4}\right)^d<2.
\end{array}
$$
Let $S=k[[\overline x_1,\ldots,\overline x_d]]$. By the associativity formula for multiplicity, 
$$
e(J_n)=2e(J_nS)=2e((m_S)^nS)=2n^de(m_S)=2n^d.
$$
Thus $e(\mathcal J)=2>\mbox{vol}(\mathcal J)$.
\end{proof}

\section{valuations, integral closure and valuation saturation}\label{Vals} Divisorial valuations, $m_R$-valuations and the set  $V(R)$ of $m_R$-valuations  are defined in the introduction.



We first prove the following lemma which was referenced  in the introduction.

\begin{Lemma}\label{LemVal} Suppose that $R$ is a $d$-dimensional Noetherian local ring. Then there is a natural 1-1 correspondence between $V(R)$ and the disjoint union of $V(\hat R/P_i)$ where $P_i$ are the minimal prime ideals of $\hat R$ such that $\dim \hat R/P_i=d$.
\end{Lemma}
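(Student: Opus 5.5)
The plan is to build the correspondence in two stages: first pass from $V(R)$ to $V(\hat R)$, then decompose $V(\hat R)$ according to the minimal primes of $\hat R$.

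\textbf{Stage 1: $V(R)$ versus $V(\hat R)$.} Given $v\in V(R)$, with center prime $P=\{f\mid v(f)=\infty\}$ and valuation ring $\mathcal O_v$ dominating $R/P$, I would extend $v$ to $\hat R$ by continuity. Since $v(m_R)>0$ and $v$ is $\ZZ$-valued, $v$ is $m_R$-adically continuous. So for $f\in\hat R$ I set $\hat v(f)=v(f_n)$, where $f_n\in R$ and $f-f_n\in m_R^{n}\hat R$ with $n\gg0$. This is well defined once $v(f_n)$ stabilizes. If instead $v(f_n)\to\infty$, put $\hat v(f)=\infty$; the set of such $f$ is a prime ideal $Q$ of $\hat R$ lying over $P$.

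The extension is a valuation of $\hat R/Q$ with the same value group $\ZZ$ and the same residue field, hence of the same transcendence degree over $R/m_R=\hat R/m_{\hat R}$. It therefore satisfies the defining conditions of an $m_{\hat R}$-valuation, with $\dim \hat R/Q=d$. Conversely, restricting an element of $V(\hat R)$ to $R$ gives a valuation positive on $m_R$, and its residue field is the same, because $R\to\hat R$ is dense and values are discrete. These two maps are mutually inverse, since a continuous valuation is determined by its values on the dense subring $R$.

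\textbf{Stage 2: decomposing $V(\hat R)$.} An element $w\in V(\hat R)$ has center $Q$ with $\dim\hat R/Q=d$, which forces $Q$ to be a minimal prime $P_i$ of $\hat R$ with $\dim\hat R/P_i=d$. It is then exactly an element of $V(\hat R/P_i)$, and distinct $i$ give disjoint pieces. Conversely, elements of $V(\hat R/P_i)$ pulled back to $\hat R$ lie in $V(\hat R)$. Composing with Stage 1 gives the natural bijection
\[
V(R)\;\longleftrightarrow\;\bigsqcup_i V(\hat R/P_i).
\]

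\textbf{Main obstacle.} The delicate point is the dimension and transcendence-degree bookkeeping. Divisoriality requires transcendence degree exactly $d-1$, and Abhyankar's inequality for $R/P$ gives $\operatorname{trdeg}\le \dim R/P-1$. So a divisorial $v$ must have $\dim R/P=d$. On the complete side I need $\dim \hat R/Q=d$, and I would get it from the same inequality applied to $\hat R/Q$, which is legitimate because the transcendence degree is unchanged by the extension.

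Stabilization of $v(f_n)$ also needs justification. I would argue via the Cauchy property together with $v(m_R^n)\ge n\,v(m_R)\to\infty$. Alternatively, if the direct argument gets messy, I would invoke the characterization in Theorem 9.3.2 and Proposition 6.4.4 of \cite{HS}: $m_R$-valuations correspond to Rees valuations of $m_R$-primary ideals, which are compatible with completion.
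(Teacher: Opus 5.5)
Your proposal is correct and is essentially the paper's proof: extend $v$ to $\hat R$ by $m_R$-adic continuity, use density of $R$ in $\hat R$ together with discreteness of the values to see that value group and residue field are unchanged under extension and restriction, and apply Abhyankar's inequality to force the prime $\{\hat v=\infty\}$ to be a minimal prime $P_i$ with $\dim\hat R/P_i=d$. Factoring through $V(\hat R)$ only repackages the paper's one-step correspondence; the Rees-valuation fallback you mention is unnecessary, and would be delicate when $R$ is not formally equidimensional, so write up the direct argument.
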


\begin{proof} We will use the following observation in our proof.  Suppose that $R$ is a local domain and $P$ is a prime ideal of $\hat R$ such that $P\cap R=0$. Let $\omega$ be a rank 1 valuation of the quotient field of $\hat R/P$ and let $v$ be its restriction to $R$. Given $g\in \hat R/P$ and $n>0$, there exists $f\in R$ and $h\in m_R^n\hat R/P$ such that $g=f+h$. Taking $n$ so large that $nv(m_R)>w(g)$, we have that $v(f)=v(g)$.  Thus the valuations $v$ and $w$ have the same residue fields and value groups.

Suppose that $v\in V(R)$ Then there exists a minimal prime $Q$ of $R$ such that $v$ is a valuation of the quotient field of $R/Q$ which dominates $R/Q$ and whose residue field has transcendence degree $d-1$ over $R/m_R$. $v$ extends uniquely to a valuation $\hat v$ of $\hat R$ by defining $\hat v(\sigma)=
\displaystyle\lim_{n\rightarrow \infty}v(\sigma_n)\in\ZZ\cup\{\infty\}$ if $\sigma\in \hat R$ and $\{\sigma_n\}$ is a Cauchy sequence in $R$ which converges to $\sigma$.
Let $P=\{f\in \hat R\mid v(f)=\infty\}$. Then $P$ is a prime ideal of $\hat R$ and $\hat v$ is a valuation of the quotient field of $\hat R/P$ such that the value group of $\hat v$ is $\ZZ$ and the residue field of $\hat v$ is equal to the residue field of $v$. By Abhyankar's lemma (Theorem 6.6.7 \cite{HS})
$$
d=\mbox{ratrank }\hat v+\mbox{trdeg}_{R/m_R}\hat v\le \dim \hat R/P\le d.
$$
Thus $\dim \hat R/P=d$ and  $\hat v\in V(\hat R/P)$.

Suppose that $P$ is a minimal prime ideal of $\hat R$ such that $\dim \hat R/P=d$ and $w\in V(\hat R/P)$. Let $Q=P\cap R$ and $w^*$ be the restriction of $w$ to the quotient field of $R/Q$. As commented at the beginning of this proof, the value group of $\omega^*$ is $\ZZ$ and the residue field of $w^*$ has transcendence degree $d-1$ over $R/m_R$. Thus $w^*\in V(R)$. 

We have that $(\hat v)^*=v$ and $\widehat{(w^*)}=w$ for the correspondences $v\mapsto \hat v$ and $w\mapsto w^*$ defined above and the lemma follows.
\end{proof}






We will need the following identities on the twist of a filtration (defined in equation (\ref{eqIn3}) of the introduction).
\begin{equation}\label{eq51}
v(\mathcal I^{(c)})= \inf \frac{v(I_{\lceil cm\rceil})}{m}=\inf \frac{\lceil c m\rceil}{m}\frac{v(I_{\lceil c m\rceil})}{\lceil c m\rceil}
=c v(\mathcal I)
\end{equation}
and
\begin{equation}\label{eq50}
e(\mathcal I^{(c)})=\lim_{m\rightarrow\infty}\frac{e(I_{\lceil cm\rceil})}{m^d}=\lim_{m\rightarrow\infty}\left(\frac{\lceil c m\rceil}{m}\right)^d\frac{e(I_{\lceil cm\rceil})}{\lceil c m\rceil^d}=c^d\left(\lim_{n\rightarrow\infty}\frac{e(I_n)}{n^d}\right)= c^d e(\mathcal I).
\end{equation}

It follows from (\ref{eq51}) that
\begin{equation}\label{eq88}
\widetilde{\mathcal I^{(c)}}=(\tilde{\mathcal I})^{(c)}
\end{equation}
for $c\in \RR_{\ge 0}$.

We further have that 
\begin{equation}\label{eq89}
\overline{\mathcal I^{(c)}}=(\overline{\mathcal I})^{(c)}.
\end{equation}







The integral closure $\overline{\mathcal I}$ of a filtration and the saturation $\tilde{\mathcal I}$ of a saturation are defined in the introduction. We have that
$\mathcal I \subset \overline{\mathcal I}\subset \tilde{\mathcal I}$. 


The following lemma is referenced in the introduction. 

\begin{Lemma}\label{SatInt} The graded $R$-algebra $R[\tilde{\mathcal I}]=\sum_{n\ge 0}\tilde I_nt^n$ is integrally closed in $R[t]$.
\end{Lemma}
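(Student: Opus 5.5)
The plan is to show that $R[\tilde{\mathcal I}]$ is an intersection of subrings of $R[t]$, each of which is integrally closed in $R[t]$ for a valuative reason, and then use that an intersection of integrally closed subrings is integrally closed. Set $c_v=v(\mathcal I)\in\RR_{\ge 0}$ for $v\in V(R)$. Extend each $v\in V(R)$ to $R[t]$ by the rule
$$
v_c\Big(\sum a_nt^n\Big)=\min_n\{v(a_n)-nc_v\}.
$$
This is the Gauss extension twisted by the weight $-c_v$ on $t$, composed with $R\rightarrow R/P_v$, where $P_v$ is the prime on which $v$ is infinite. It is a valuation on $R[t]$ with values in $\RR\cup\{\infty\}$: it is the Gauss valuation on $(R/P_v)[t]$ with real weight on $t$.

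Define $A_v=\{g\in R[t]\mid v_c(g)\ge 0\}$. Because $v_c$ is a (Krull-type, possibly with nontrivial support) valuation on $R[t]$, $A_v$ is a subring of $R[t]$ that is integrally closed in $R[t]$. Indeed, if $g^m+b_1g^{m-1}+\cdots+b_m=0$ with $b_i\in A_v$ and $v_c(g)<0$, then $v_c(g^m)=mv_c(g)$ is strictly smaller than $v_c(b_ig^{m-i})$ for every $i$, which is a contradiction.

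Next I check that $A_v$ is graded, i.e.\ that $g\in A_v$ if and only if each homogeneous component lies in $A_v$. This holds by the min-definition of $v_c$. Hence
$$
\bigcap_{v\in V(R)}A_v=\bigoplus_n\{f\in R\mid v(f)\ge nv(\mathcal I)\ \forall v\}t^n .
$$
In degrees $n\ge 1$ this is exactly $\tilde I_n t^n$, modulo the condition $f\in m_R$ in Definition \ref{SatDef}.

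The main point needing care is the degree-zero term and the $m_R$ condition. In degree $0$ the intersection gives $R$, which matches $\tilde I_0=R$ for a filtration; I would interpret $\tilde I_0=R$ as intended. For $n\ge1$ I must reconcile the two descriptions. If $v(\mathcal I)>0$ for some $v$, the valuative condition already forces $f\in m_R$, since a unit has value $0$. In the degenerate case where all $v(\mathcal I)=0$, the set $\{f\mid v(f)\ge 0\}$ is $R$, not $m_R$. So I would instead intersect with the additional ring $R\oplus\bigoplus_{n\ge1}m_Rt^n$, which is also integrally closed in $R[t]$: if $g\in R[t]$ is integral over it, reduce mod $m_R$, so that $\bar g\in k[t]$ is integral over $k$ and hence constant. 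The intersection of integrally closed subrings is integrally closed, which gives the lemma. One also needs $V(R)\neq\emptyset$ only to make sense of the condition, and the infinite values on $P_v$ cause no issue because $\infty\ge 0$.
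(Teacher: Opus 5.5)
Your argument is correct, and it is organized differently from the paper's. The paper first invokes the fact that the integral closure of the graded ring $R[\tilde{\mathcal I}]$ in $R[t]$ is graded (Theorem 2.3.2 of \cite{HS}), so it only treats a homogeneous element $ft^s$. From an integral equation it obtains $f^n+a_1f^{n-1}+\cdots+a_n=0$ with $a_i\in\tilde I_{is}$. Then, for each $v\in V(R)$ with $v(f)<\infty$, the usual dominance argument produces an $i$ with $iv(f)\ge v(a_i)\ge isv(\mathcal I)$.

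You instead write $R[\tilde{\mathcal I}]$ as an intersection of subrings of $R[t]$, each integrally closed in $R[t]$. These are the rings $A_v$ cut out by the weighted Gauss extensions $v_c$ (weight $-v(\mathcal I)$ on $t$), together with $R\oplus\bigoplus_{n\ge1}m_Rt^n$. The dominance computation is the same in both proofs. Your version needs no gradedness theorem for integral closures, since gradedness of each $A_v$ is immediate from the $\min$ definition. The cost is the standard fact that $v_c$ is multiplicative, in particular $v_c(g^m)=mv_c(g)$, which you assert rather than prove. It follows by looking at the coefficient of $t^{i_0+j_0}$ in $gh$, where $i_0$ and $j_0$ are the smallest indices at which the minima defining $v_c(g)$ and $v_c(h)$ are attained: only $a_{i_0}b_{j_0}$ attains the minimal value there.

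Your route also makes explicit two points the paper's proof passes over. First, $\tilde I_0$ must be read as $R$. Second, Definition \ref{SatDef} requires $f\in m_R$. Your auxiliary ring handles the second point correctly, but it is also automatic in the paper's setup: if $s\ge1$ then every $a_i\in\tilde I_{is}\subset m_R$, so $f^n\in m_R$ and hence $f\in m_R$.
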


\begin{proof} The integral closure of $R[\tilde{\mathcal I}]$ in $R[t]$ is graded by Theorem 2.3.2 \cite{HS}. So it suffices to show that if $f\in R$,  $s\in \ZZ_{>0}$, and $ft^s$ is integral over $R[\tilde{\mathcal I}]$, then $f\in \tilde I_s$. Suppose that $ft^s$ is integral over $R[\tilde{\mathcal I]}$. Then there exists a relation
$$
f^n+a_1f^{n-1}+\cdots+a_n=0
$$
where $a_i\in \tilde I_{is}$ for all $i$. Let $v\in V(R)$, so that $v\in V(\hat R/P)$ for some  minimal prime $P$ of $\hat R$ such that $\dim \hat R/P=d$.
If $v(f)=\infty$, that is, $f\in P\cap R$, then certainly $v(f)\ge sv(\mathcal I)$. If $v(f)<\infty$, then 
$v(f^n)\ge \min\{v(a_if^{n-i})\mid 1\le i\le n\}$ since 
$$
v(f^n+a_1f^{n-1}+\cdots+a_n)=v(0)=\infty.
$$
 Thus there exists an $1\le i$ such that 
$nv(f)\ge v(a_i)+(n-i)v(f)$, so that 
$$
iv(f)\ge v(a_i)\ge is v(\mathcal I).
$$
 Thus $v(f)\ge sv(\mathcal I)$. Since this relation holds for all $v\in V(R)$, $f\in \tilde I_s$, and so $ft^s\in R[\tilde{\mathcal I}]$.
\end{proof}

\begin{Remark} $v(\mathcal I)=0$ for all $v\in V(R)$ if and only if $\tilde{\mathcal I}=\{J_n\}$ where $J_n=m_R$ for all $n>0$.
\end{Remark}

\begin{Remark} $v(\mathcal I)\le v(\mathcal J)$ for all $v\in V(R)$ if and only if $\tilde{\mathcal J}\subset \tilde{\mathcal I}$
\end{Remark}



\section{A saturated filtration which is not real divisorial}\label{Notdivex}

In Subsection \ref{SubSecVal} of the introduction, we pointed out that for an $I$-adic filtration $\mathcal I=\{I^n\}$, where $I$ is an $m_R$-primary ideal in a local ring $R$ which is not formally equidimensional, it may be that $\tilde{\mathcal I}$ is not a divisorial filtration. It is more difficult to find an example of a saturated graded $m_R$-family on a formally equidimensional local ring which is not real divisorial.

 We give an example in this section of a saturated filtration on a two dimensional regular local ring which is not real divisorial. 
 We show that an example from \cite{C10} satisfies this condition. 
In \cite{C10} a graded filtration $\mathcal I=\{I_n\}$ on a two dimensional regular local ring is constructed which has the property that its fiber cone 
$\oplus_{n\ge 0}I_n/m_RI_n$ has infinitely many minimal primes. As a consequence, the $R$-scheme $\mbox{Proj}(\oplus_{n\ge 0}I_n/m_RI_n)$ is not a Noetherian scheme. We will show that this filtration is saturated, and that it is not a real divisorial filtration.

  We begin by reviewing  the construction of $\mathcal I$ in \cite{C10}.
Let $R$ be a 2 dimensional regular local ring with algebraically closed residue field. 
 Let $X_0=\mbox{Spec}(R)$ and $X_1$ be the blowup of the maximal ideal $m_R$ of $R$, with integral exceptional divisor $E(1)_1$. Let $p_1\in X_1$ be a closed point on $E(1)_1$. Let $X_2\rightarrow X_1$ be the blowup of $p_1$. Let $E(2)_1$ be the strict transform of $E(1)_1$ on $X_2$ and $E(2)_2$ be the integral exceptional divisor mapping to $p_1$.
Let $X_3\rightarrow X_2$ be the blowup of a closed point $p_2\in E(2)_2\setminus E(2)_1$. Let $E(3)_3$ be the integral exceptional divisor mapping to $p_2$. Let $E(3)_1$ be the strict transform of $E(2)_1$ and $E(3)_2$ be the strict transform of $E(2)_2$.
Inductively continue this construction, letting $X_l\rightarrow X_{l-1}$ be the blowup of a point $p_{l-1}\in E(l-1)_{l-1}\setminus E(l-1)_{l-2}$. Let $E(l)_l$ be the integral exceptional divisor mapping to $p_{l-1}$ and $E(l)_i$ be the strict transform of $E(l-1)_i$ for $i<l$. Then $X_l$ is nonsingular, and the reduced exceptional divisor of $X_l\rightarrow \mbox{Spec}(R)$ is the divisor
$E(l)_1+E(l)_2+\cdots+E(l)_l$. 
When $l$ is understood,  we will often write $E_i$ for $E(l)_i$. In particular, the valuation $v_{E_i}$ is well defined. 

In Section 4 \cite{C10},  the graded filtration $\mathcal I=\{I_m\}$ is defined  by 
$$
I_m=\{f\in R\mid v_{E_i}(f)\ge \frac{m(2^i-1)}{2^{i-1}}\mbox{ for }i\ge 1\}.
$$
It is shown in Proposition 4.1, the discussion after it, and the beginning of  Section 4.2 of \cite{C10} that 
\begin{equation}\label{Ex1}
I_m\mathcal O_{X_l}\mbox{ is invertible for $m<2^{l-1}$}
\end{equation}
and
\begin{equation}\label{Ex2}
v_{E_n}(I_m)=\lceil\frac{m(2^n-1)}{2^{n-1}}\rceil\mbox{ for all $m$ and $n$}.
\end{equation}
From (\ref{Ex2}) we compute that
$$
v_{E_i}(\mathcal I)=\inf_m\frac{v_{E_i}(I_m)}{m}
=\frac{2^i-1}{2^{i-1}}\mbox{ for all $i$}.
$$
We now verify that $\mathcal I$ is saturated. By definition, 
$$
\tilde I_n=\{f\in R\mid w(f)\ge nw(\mathcal I)\mbox{ for all }w\in V(R)\}.
$$
If $f\in \tilde I_n$, then certainly $v_{E_i}(f)\ge nv_{E_i}(\mathcal I)$ for all $v_{E_i}$, so $f\in I_n$. 
 
 Let $w\in V(R)$. If the center of $w$ on $X_l$ is on $E(l)_l$ for all $l$, then the center of $w$ on $X_l$ must be the  point $p_l$ for all $l$, so that $\mathcal O_w$ dominates $\mathcal O_{X_l,p_l}$ for all $l$. We have that $\mbox{trdeg}_{R/m_R}\mathcal O_w/m_w=1$ since $w$ is an $m_R$-valuation. Thus there exists $t\in  \mathcal O_w$ such that the residue $\overline t$ of $t$ in $\mathcal O_w/m_w$ is transcendental over $R/m_R=k$. Write $t=\frac{f}{g}$ with $f,g\in R$. There exists $\overline n$ such that $(f,g)\mathcal O_{X_{\overline n},p_{\overline n}}$ is a principal ideal (for instance by Theorem 4.11 \cite{ResS}). Thus $f$ divides $g$ or $g$ divides $f$ in $\mathcal O_{X_{\overline n},p_{\overline n}}$, so that $t$ or $\frac{1}{t}$ is in $\mathcal O_{X_{\overline n},p_{\overline n}}$.   Thus $\overline t\in \mathcal O_{X_{\overline n},p_{\overline n}}/m_{p_{\overline n}}$ a contradiction, since  $\mathcal O_{X_{\overline n},p_{\overline n}}/m_{p_{\overline n}}=k=R/m_R$,
 giving a contradiction. Thus there exists $l_0$ such that if $l\ge l_0$ then the center of $w$ on $X_l$ is on $E(l)_r$ with $r<l$. We have that $X_m\rightarrow X_{l_0}$ is an isomorphism for $m\ge l_0$ in a neighborhood of $E_r$. Thus letting $p$ be the center of $w$ on $X_{l_0}$, we have that  
 \begin{equation}\label{Ex3}
 I_m\mathcal O_{X_{l_0},p}\mbox{ is invertible for all $m\ge l_0$.}
 \end{equation}
 We have four possible cases for the center $p$.
 \begin{enumerate}
 \item[1)] $p=E_r$.
 \item[2)] $p\in E_r\setminus (E_{r-1}\cup E_{r+1})$.
 \item[3)] $p=E_r\cap E_{r+1}$.
 \item[4)] $p=E_{r-1}\cap E_r$.
 \end{enumerate}
 
 In Case 1), the divisorial valuation $w$ is equal to $v_{E_r}$, so that $w(\mathcal I)\in \QQ$. Suppose that Case 2) holds. Let $f_r$ be a local equation of $E_r$ in $\mathcal O_{X_{l_0},p}$.
 By (\ref{Ex1}), (\ref{Ex2}) and (\ref{Ex3}), we have that 
 $$
 I_m\mathcal O_{X_{l_0},p}=f_r^{\lceil\frac{m(2^r-1)}{2^{r-1}}\rceil}\mathcal O_{X_{l_0},p}
 $$
 for all $m$. Thus 
 $$
 w(I_m)=\lceil\frac{ m(2^r-1)}{2^{r-1}}\rceil w(f_r)
 $$
 for all $m$, and so
 $$
 w(\mathcal I)=\frac{(2^r-1)}{2^{r-1}}w(f_r)\in\QQ.
 $$
 In Case 3), let $f_r$ be a local equation of $E_r$ in $\mathcal O_{X_{l_0},p}$ and $f_{r+1}$ be a local equation of $E_{r+1}$. Then by (\ref{Ex1}), (\ref{Ex2}) and (\ref{Ex3}), we have that 
 $$
 \omega(I_m)=\lceil\frac{ m(2^r-1)}{2^{r-1}}\rceil w(f_r)+ \lceil\frac{m(2^{r+1}-1)}{2^{r}}\rceil w(f_{r+1}) 
 $$
 for all $m$, so that 
 $$
 w(\mathcal I)=\frac{(2^r-1)}{2^{r-1}}w(f_r)+\frac{(2^{r+1}-1)}{2^{r}}w(f_{r+1}) \in\QQ.
 $$
 Case 4) is computed as in Case 3), and we conclude  again that $w(\mathcal I)\in \QQ$.
 Thus we have that 
 \begin{equation}\label{Ex4}
 w(\mathcal I)\in \QQ\mbox{ for all }\omega\in V(R).
 \end{equation} 
 Suppose that $\mathcal I$ is a real divisorial filtration. Then there exist $w_1,\ldots, w_s\in V(R)$ and $a_1,\ldots, a_s\in \RR_{>0}$ such that
 $$
 I_m=\{f\in R\mid w_i(f)\ge a_im\mbox{ for }1\le i\le s\}.
 $$ 
 For fixed $i$, given $\epsilon>0$, there exists $m$ and $f\in I_m$ such that $\frac{w_i(f)}{m}<w_i(\mathcal I)+\epsilon$. $\frac{w_i(f)}{m}\ge a_i$ implies 
  $a_i\le w_i(\mathcal I)$ for all $i$, so that 
 $$
 I_m=\{f\in R\mid w_i(f)\ge w_i(\mathcal I)m\mbox{ for }1\le i\le s\}
 $$
 so that $\mathcal I$ is a $\QQ$-divisorial filtration. It is shown in \cite{Ru} or \cite{C11} that this condition implies that $\mbox{Proj}(\oplus_{n\ge 0}I_n)$ is a Noetherian scheme. But this contradicts the fact that $\mbox{Proj}(\oplus_{n\ge 0}I_n)$ is not Noetherian, as is shown in Section 4 of \cite{C10} since the fiber cone $\oplus_{n\ge 0}I_n/m_RI_n$ has infinitely many primes. 
 Thus the saturated graded $m_R$-filtration $\mathcal I$ is not real divisorial.

\section{Rees's Theorem}\label{ReesSec}

In this section we extend a famous theorem of Rees to graded $m_R$-families on a Noetherian local ring. Background, discussion  and some history of this problem are given in SubSection \ref{ReesSubSec}.
 Theorem \ref{Theorem14} (Theorem \ref{Theorem14*} of the introduction) is proven in \cite{BLQ} when $R$ is analytically irreducible. 
 In \cite{BLQ}, they prove Theorem \ref{Theorem14} for  the volume $\mbox{vol}(\mathcal I)$ instead of for  multiplicity $e(\mathcal I)$. 
 Since they are in an analytically irreducible local ring,  the volume = multiplicity formula
$$
\mbox{\rm vol}(\mathcal I)=\lim_{p\rightarrow\infty}\frac{e(I_p)}{p^d}=e(\mathcal I)
$$
of Theorem \ref{volmult}  holds, so their theorem implies Theorem \ref{Theorem14} for analytically irreducible local rings. It is then not difficult to extend Theorem \ref{Theorem14}  to arbitrary  local rings.

The proof in \cite{BLQ} is dependent on Okounkov bodies which establishes the existence of the limits (\ref{eq10}) and (\ref{eq33}). 
In this section, we modify their proof for analytically irreducible local rings to give a proof that is independent of the theory of volume and of Okounkov bodies.
That is, we replace calculations of ${\rm vol}(\mathcal I)$ with calculations of $e(\mathcal I)$.

The following lemma is proven for analytically irreducible local rings in Proposition 2.7 \cite{BLQ}. We use the idea of their proof to give a proof which does not rely on the fact that volumes are limits on analytically irreducible local rings. The graded filtration $\mathcal I(v)$ associated to $v\in V(R)$ is defined in (\ref{eqvalfin}).

\begin{Lemma}\label{Lemma21} Suppose that $R$ is a $d$-dimensional Noetherian local ring and $v\in V(R)$. Then $e(\mathcal I(v))>0$.
\end{Lemma}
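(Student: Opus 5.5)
I would prove that $e(\mathcal I(v))>0$ by bounding $e(I(v)_n)$ from below through a reduction to a complete local domain on which $v$ lives. The proof will use only the intersection-theoretic limit of Theorem \ref{Theorem5} and length estimates.

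\emph{Reduction.} By Lemma \ref{LemVal}, $v$ corresponds to a valuation $\hat v\in V(\hat R/P)$ for a minimal prime $P$ of $\hat R$ with $\dim \hat R/P=d$. Set $S=\hat R/P$, a complete local domain. Since $I(v)_n\hat R\subset \{g\in\hat R\mid \hat v(g)\ge n\}$, the image of $I(v)_n$ in $S$ lies in $J_n:=\{g\in S\mid \hat v(g)\ge n\}$. The associativity formula (as in the proof of Theorem \ref{multlim}) gives
$$
e(I(v)_n)=e(I(v)_n\hat R)\ge \ell(\hat R_P)\,e(I(v)_nS)\ge e(J_n).
$$
The last inequality holds because $I(v)_nS\subset J_n$, and multiplicity reverses inclusions. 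It therefore suffices to show $\liminf_n e(J_n)/n^d>0$ for the valuation ideals of a divisorial $m_S$-valuation on a complete local domain $S$.

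\emph{Lower bound on the domain.} I would use $e(J_n)\ge$ a multiple of $\ell(S/J_n^{\,k})$-type growth. Concretely, $e(J_n)=\lim_k d!\,\ell(S/J_n^k)/k^d$, and $J_n^k\subset J_{nk}$, so
$$
e(J_n)\ge \limsup_k d!\,\frac{\ell(S/J_{nk})}{k^d}=n^d\limsup_k d!\,\frac{\ell(S/J_{nk})}{(nk)^d}.
$$
Hence it suffices to show $\ell(S/J_m)\ge c\,m^d$ for some $c>0$ and all large $m$. This is where the divisorial hypothesis is used. The residue field $k_v$ has transcendence degree $d-1$ over $k=S/m_S$, and the value group is $\ZZ$. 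I would bound $\ell(S/J_m)\ge\sum_{j<m}\dim_k (J_j/J_{j+1})$ and embed $J_j/J_{j+1}$ into the $j$-th graded piece of $\mathrm{gr}_v$. Since $S$ is complete, hence analytically unramified, $\mathcal O_v$ is essentially of finite type over $S$. Realize $\mathcal O_v=\mathcal O_{Y,E}$ on the blowup $Y$ of an $m_S$-primary ideal, with $E$ a prime exceptional divisor of dimension $d-1$ over $k$.

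\emph{Main step.} The main obstacle is showing $\dim_k J_j/J_{j+1}\ge c' j^{d-1}$. The approach I would try first: choose an $m_S$-primary ideal $I$ with $I\mathcal O_Y=\mathcal O_Y(-F)$ invertible and $-F$ ample, where $E$ appears in $F$ with coefficient $a=v(I)>0$. Then $I^t\subset J_{at}$, and the image of $I^t$ in $J_{at}/J_{at+1}$ surjects onto the space of sections of $\mathcal O_E(-tF)$ coming from $S$. For $t\gg0$ these sections have dimension of order $t^{d-1}$ by ampleness of $-F|_E$ on the projective $(d-1)$-dimensional $k$-scheme $E$, combined with Serre vanishing on $Y$ (so that $H^0(Y,\mathcal O(-tF))\to H^0(E,\mathcal O_E(-tF))$ is surjective and $H^0(Y,\mathcal O(-tF))=\overline{I^t}$). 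Summing over $t\le m/a$ gives $\ell(S/J_m)\ge c\,m^d$.

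An alternative fallback is purely intersection-theoretic. Using $J_{at}\supset \overline{I^t}$ is the wrong direction, so instead I would bound $e(J_n)$ below by comparing $J_n\subset \overline{I^{\lfloor n/b\rfloor}}$ for some $I$ whose Rees valuations include $v$. This direction fails in general, which is why I would rely on the length count above.
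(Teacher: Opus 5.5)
Your argument is correct in outline, but it takes a genuinely different route from the paper. After the same reduction to $S=\hat R/P$, the paper applies Izumi's theorem on the analytically irreducible ring $S$ to get $J(v)_n\subset\overline{m_S^{\lfloor n\gamma\rfloor}}$ for some $\gamma\in\QQ_{>0}$. It then uses $e(\overline{m_S^{t}})=e(m_S^t)=t^de(m_S)$, which holds because $S$ is analytically unramified, and obtains the explicit bound $e(\mathcal I(v))\ge\gamma^de(m_S)$. You instead reduce to a colength estimate $\ell(S/J_m)\ge cm^d$ and prove it by counting graded pieces. You restrict sections of $\mathcal O_Y(-tF)$ to the prime exceptional divisor $E$ with $\mathcal O_{Y,E}=\mathcal O_v$, and then use Serre vanishing and the Hilbert polynomial of the ample $-F|_E$ on the $(d-1)$-dimensional projective $k$-scheme $E$. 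This avoids Izumi's theorem entirely. The price is that you need the realization of $v$ as a divisor on a blowup (cited in the paper's introduction) and some projective geometry. In return you get an intersection-theoretic lower bound, essentially $e(\mathcal I(v))\ge((-F|_E)^{d-1})/a^d$ with $a=v(I)$, which fits the spirit of the paper. Your final ``fallback'' remark is mistaken, however. The containment $J_n\subset\overline{m_S^{\lfloor n\gamma\rfloor}}$ does hold by Izumi's theorem, and it is exactly the paper's proof.

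One step needs repair. The identification $H^0(Y,\mathcal O_Y(-tF))=\overline{I^t}$ needs both $Y$ and $S$ to be normal. On a normal $Y$ over a non-normal $S$, the global sections lie in the normalization of $S$ rather than in $S$, so they need not be elements of $J_{at}$. Instead, work on $Y=\mbox{Proj}(S[It])$ itself. There the natural map $I^t\rightarrow H^0(Y,\mathcal O_Y(-tF))$ is an isomorphism for $t\gg0$, because graded local cohomology of $S[It]$ vanishes in large degrees. Serre vanishing of $H^1(Y,\mathcal I_E\otimes\mathcal O_Y(-tF))$ then makes $I^t\rightarrow H^0(E,\mathcal O_E(-tF))$ surjective. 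The induced map from the image of $I^t$ in $J_{at}/J_{at+1}$ onto $H^0(E,\mathcal O_E(-tF))$ is well defined precisely because $\mathcal O_{Y,E}=\mathcal O_v$. Locally $f=gh$, where $h$ generates $I^t\mathcal O_{Y,p}$ and $v(h)=at$, so $v(f)\ge at+1$ forces $g$ into the ideal of $E$. With this, $\dim_k J_{at}/J_{at+1}\ge h^0(E,\mathcal O_E(-tF))\ge c't^{d-1}$ for $t\gg0$, and your summation goes through.
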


\begin{proof} There exists by Lemma \ref{LemVal} a minimal prime $P$ of $\hat R$ such that $\dim \hat R/P=d$ and $v\in V(\hat R/P)$. Let $S=\hat R/P$ and 
$J(v)_n=\{f\in S\mid v(f)\ge n\}$ for $n\in \ZZ_{\ge 0}$, so that  $J(v)_n=I(v)_nS$.
We have that
\begin{equation}\label{eq90} 
\ell(R/(I(v)_n)^m)=\ell(\hat R/(I(v)_n)^m\hat R)\ge  \ell(S/(J(v)_n)^mS).
\end{equation}
Let $v_1,\ldots,v_r$ be the Rees valuations of $m_S$, so that there exist $a_1,\ldots,a_r\in \ZZ_{>0}$ such that 
$$
I(v_1)_{a_1n}\cap \cdots \cap I(v_r)_{a_rn}=\overline{m_S^n}
$$
for all $n\in \ZZ_{>0}$. Since $S$ is analytically irreducible, the $v_i$ are all $m_S$-valuations (Lemma \ref{LemVal}), and Izumi's lemma holds (Theorem E, page 409 \cite{R3}, or \cite{RoSp}). Thus there exists $\gamma\in \QQ_{>0}$ such that
$J(v)_n\subset I(v_i)_{\lceil n\gamma a_i\rceil}$ for all $n\in \ZZ_{>0}$, and so
$$
J(v)_n\subset I(v_1)_{\lceil n\gamma a_1\rceil}\cap \cdots \cap I(v_r)_{\lceil n\gamma a_r\rceil}
\subset I(v_1)_{a_1\lfloor n\gamma\rfloor}\cap \cdots\cap I(v_r)_{a_r\lfloor n\gamma\rfloor}
=\overline{m_S^{\lfloor n\gamma\rfloor}}
$$
for all $n\in \ZZ_{>0}$. Therefore 
\begin{equation}\label{eq91}
\ell(S/(J(v)_n)^m)\ge \ell(S/\overline{(m_S^{\lfloor n\gamma\rfloor}})^m)
\end{equation}
for all $m,n\in \ZZ_{>0}$. Thus
$$
\begin{array}{lll}
e(I(v)_n)
&\ge&\displaystyle\lim_{m\rightarrow\infty}d!\frac{\ell(S/(J(v)_n)^m)}{m^d}\\
&\ge& \displaystyle\lim_{m\rightarrow\infty}\frac{d!\ell(S/(\overline{m_S^{\lfloor n\gamma\rfloor}})^m)}{m^d}\\
&=&e(\overline{m_S^{\lfloor n\gamma\rfloor}})
=e(m_S^{\lfloor n\gamma\rfloor})=(\lfloor n\gamma\rfloor)^de(m_S)
\end{array}
$$
where the  first inequality is by (\ref{eq90}), the second inequality is by (\ref{eq91})
 and the second equality is by Proposition 11.2.1 \cite{HS} (and certainly by Rees's Theorem, Theorem \ref{ReesThm} of the introduction).
Thus
$$
e(\mathcal I(v))=\lim_{n\rightarrow\infty}\frac{e(I(v)_n)}{n^d}\ge \lim_{n\rightarrow\infty}\frac{(\lfloor n\gamma\rfloor)^d}{n^d}e(m_S)
=\gamma^de(m_S)>0.
$$
\end{proof}

\begin{Lemma}\label{LemmaLB} Suppose that $R$ is a $d$-dimensional Noetherian local ring and $\mathcal I$ is a graded $m_R$-family. Then $e(\mathcal I)=0$ implies $v(\mathcal I)=0$ for all $m_R$-valuations $v$.
\end{Lemma}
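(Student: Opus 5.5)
We prove the contrapositive: if $v(\mathcal I)>0$ for some $v\in V(R)$, then $e(\mathcal I)>0$. The idea is to compare $\mathcal I$ with a twist of the valuation filtration $\mathcal I(v)$, whose multiplicity is positive by Lemma \ref{Lemma21}.

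\textbf{Step 1: containment in a twist.} Let $v\in V(R)$ with $a:=v(\mathcal I)>0$. Since $a=\inf_m v(I_m)/m$ by (\ref{ex2*}), we have $v(I_n)\ge na$ for all $n$. Choose a rational number $0<c<a$. Then for every $n$,
$$
I_n\subset \{f\in R\mid v(f)\ge na\}\subset \{f\in R\mid v(f)\ge \lceil cn\rceil\}= I(v)_{\lceil cn\rceil}.
$$
The last containment holds because $v$ takes integer values (or $\infty$) and $na\ge cn$, so $v(f)\ge na$ forces $v(f)\ge\lceil cn\rceil$. For $n=0$ both sides are $R$. Hence $\mathcal I\subset \mathcal I(v)^{(c)}$ as graded $m_R$-families. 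Here $\mathcal I(v)^{(c)}$ is indeed a graded family, since $\lceil cm\rceil+\lceil cn\rceil\ge\lceil c(m+n)\rceil$ and $\mathcal I(v)$ is a filtration.

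\textbf{Step 2: compare multiplicities.} By (\ref{eq80}), the containment gives $e(\mathcal I)\ge e(\mathcal I(v)^{(c)})$. By (\ref{eq50}), $e(\mathcal I(v)^{(c)})=c^de(\mathcal I(v))$, and this is positive by Lemma \ref{Lemma21}. So $e(\mathcal I)>0$, which is the contrapositive of the lemma.

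\textbf{Expected obstacle.} There is no real difficulty; the content is carried by Lemma \ref{Lemma21}. The only care needed is in the twist formula (\ref{eq50}), which relies on Theorem \ref{Theorem5} to know that $e(\mathcal I(v))$ exists as a limit. Using a rational $c<a$ rather than $c=a$ avoids rounding issues and keeps the containment in Step 1 valid for every $n$.
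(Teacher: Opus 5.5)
Your proof is correct and is essentially the paper's argument: the containment $\mathcal I\subset\mathcal I(v)^{(c)}$, followed by (\ref{eq80}), (\ref{eq50}) and Lemma \ref{Lemma21}. The paper takes $c=v(\mathcal I)$ directly, which already works because $v$ is integer-valued, so your passage to a rational $c<a$ is harmless but unnecessary.
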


\begin{proof}
Suppose that $e(\mathcal I)=0$ and $v(\mathcal I)=c>0$ for some $m_R$-valuation $v$. Then
$I_n\subset I(v)_{\lceil cn\rceil}=I(v)^{(c)}_n$ for all $n$, so that  $e(\mathcal I)\ge c^d e(\mathcal I(v))>0$ by equation (\ref{eq80}), (\ref{eq50}) and  Lemma \ref{Lemma21}.
\end{proof}

The following theorem generalizes Proposition 3.12 \cite{BLQ} from analytically irreducible local rings to arbitrary Noetherian local rings.
We use the idea of their proof to give a proof which does not rely on the fact that volumes are limits on analytically irreducible local rings.

\begin{Theorem}\label{Theorem312} Let $R$ be a $d$-dimensional Noetherian local ring and $\mathcal I\subset \mathcal J$ be graded $m_R$-families. Suppose that 
$e(\mathcal I)=e(\mathcal J)$. Then 
$v(\mathcal I)= v(\mathcal J)$ for all $v\in V(R)$. 
\end{Theorem}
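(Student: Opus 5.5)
We argue by contraposition, in the spirit of Proposition 3.12 \cite{BLQ}, but using only $e(\,\cdot\,)$ and the intersection-theoretic machinery of Section 2. Since $\mathcal I\subset\mathcal J$ gives $v(\mathcal I)\ge v(\mathcal J)$ for every $v\in V(R)$, suppose that some $v\in V(R)$ has $v(\mathcal I)>v(\mathcal J)$. We will produce a graded $m_R$-family $\mathcal K$ with $\mathcal I\subset\mathcal K\subset\mathcal J'$, where $\mathcal J'=\{J_n+I(v)_{\lceil an\rceil}\}$ for a suitable $a$. The goal is to show $e(\mathcal K)<e(\mathcal J)$, or rather $e(\mathcal I)>e(\mathcal J)$ directly.

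\textbf{Reduction.} First reduce to the case where $R$ is a complete local domain. Pass to $\hat R$; multiplicities are unchanged. By the associativity formula of Corollary \ref{Cor73}, $e(\mathcal I)=\sum_P \ell(\hat R_P)\,e(\mathcal I\hat R/P)$, summed over minimal primes $P$ with $\dim\hat R/P=d$. By (\ref{eq80}), each summand for $\mathcal I$ dominates the corresponding summand for $\mathcal J$. Hence $e(\mathcal I)=e(\mathcal J)$ forces $e(\mathcal I S)=e(\mathcal J S)$ for $S=\hat R/P$. By Lemma \ref{LemVal}, $v$ corresponds to an element of $V(S)$ for one such $P$, and $v(\mathcal I)=v(\mathcal IS)$ (the valuation of $R$ is the restriction). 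So we may assume $R=S$ is a complete (analytically irreducible) domain.

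\textbf{Main step.} Set $a=v(\mathcal I)$, $b=v(\mathcal J)$, with $a>b$, and choose $b<c<a$. Put $\mathcal K=\{J_n\cap I(v)_{\lceil cn\rceil}\}$, a graded family with $\mathcal I\subset\mathcal K\subset\mathcal J$ for all large $n$. We aim to show $e(\mathcal K)>e(\mathcal J)$; then (\ref{eq80}) gives $e(\mathcal I)\ge e(\mathcal K)>e(\mathcal J)$, a contradiction. We work with the intersection-product description from Theorem \ref{Theorem70}: $e(\mathcal J)=-\lim ((-F_n/n)^d)$, and similarly for $\mathcal K$ with divisors $G_n$. Choose $n$ so that $v(J_n)/n<c$, and blow up further so that the center of $v$ is a prime exceptional divisor $E$ on a common model $Z$ where both $J_n$ and $K_n$ are invertible. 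Then $-G_n/n\le -F_n/n$, with $-F_n/n+G_n/n$ effective and containing $E$ with coefficient at least $c-v(J_n)/n>\delta>0$ for a uniform $\delta$. Telescope the difference $((-G_n/n)^d)-((-F_n/n)^d)$ as in Lemma \ref{Lemma4}, writing it as a sum of terms $(D\cdot(\text{nef})^{d-1})$ with $D$ effective. Each term is $\ge0$ by Lemma \ref{NefLem}, and one term is at least $\delta\,((-G_n/n)^{d-1}\cdot E)$. Hence $-((-G_n/n)^d)\ge -((-F_n/n)^d)+\delta\,((-G_n/n)^{d-1}\cdot E)$.

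\textbf{Expected obstacle.} The hard part is a uniform positive lower bound on $((-G_n/n)^{d-1}\cdot E)$ as $n\to\infty$; without it the gain may vanish in the limit. This is precisely the difficulty noted in the introduction, that $v_E(I_n)$ need not approximate $v(\mathcal I)$ on all divisors at once. We would first try to bound $-G_n/n$ below by a fixed nef divisor: $K_n\subset I(v)_{\lceil cn\rceil}\cap J_n$, and by Izumi (as in Lemma \ref{Lemma21}) both families are squeezed between powers of $\overline{m_R^{\lfloor\gamma n\rfloor}}$. This should give $((-G_n/n)^{d-1}\cdot E)\ge\gamma^{d-1}((-M)^{d-1}\cdot E)>0$, where $M$ is the divisor of $m_R\mathcal O_Z$ and positivity comes from ampleness of $-M$ restricted to the exceptional $E$. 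Making this monotonicity rigorous for the $(d-1)$-fold product on $E$ uses Lemma \ref{NefLem} applied to the subscheme $E$. If that fails, the fallback is to replace $\mathcal K$ by $\mathcal J\cdot$-twisted families and use Lemma \ref{Lemma21} with (\ref{eq50}) to compare with $e(\mathcal I(v))>0$ directly.
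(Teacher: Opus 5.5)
Your reduction to $S=\hat R/P$ is correct and is the same as the paper's. The telescoping inequality
\[
\frac{e(K_n)}{n^d}\ge \frac{e(J_n)}{n^d}+\delta\left(\left(\frac{-G_n}{n}\right)^{d-1}\cdot E\right)
\]
is also fine. But the proof has a genuine gap at exactly the point you flag: everything rests on $\liminf_{n}\left(\left(\frac{-G_n}{n}\right)^{d-1}\cdot E\right)>0$, and your proposed justification fails for three reasons.

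\begin{itemize}
\item Izumi's lemma gives $K_n\subset I(v)_{\lceil cn\rceil}\subset\overline{m_R^{\lfloor \gamma cn\rfloor}}$, i.e.\ $-G_n/n\le -\frac{\lfloor\gamma cn\rfloor}{n}M$. That is an upper bound, not a lower one.
\item More seriously, $N\mapsto (N^{d-1}\cdot E)$ is not monotone on nef divisors when $E$ lies in the support of the difference. That situation is unavoidable here, since the $E$-coefficients are exactly what differ. To apply Lemma \ref{NefLem} to the telescoped terms you would need the difference restricted to $E$ to be an effective cycle on $E$, and this fails when $E$ is a component. For instance, on the blowup of $m_R$ in a two-dimensional regular local ring, $-E\ge -2E$ but $(-E\cdot E)=1<2=(-2E\cdot E)$.
\item $-M$ is only nef on $Z$, not ample. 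Moreover $((-M)^{d-1}\cdot E)=0$ whenever $E$ is contracted on the normalized blowup of $m_R$, e.g.\ for $v(x)=1$, $v(y)=2$ on $k[[x,y]]$.
\end{itemize}

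In fact $((-G_n)^{d-1}\cdot E)$ itself vanishes whenever $E$ is contracted on the normalized blowup of $K_n$. So no argument that merely sandwiches $K_n$ between powers of $m_R$ can produce the bound; it must use the specific shape of $K_n$, namely that $v(K_n)$ exceeds $v(J_n)$ by roughly $(c-b)n$. The ``fallback'' is not an argument. As written, you only obtain the trivial inequality $e(\mathcal K)\ge e(\mathcal J)$.

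The paper sidesteps this with a direct length computation. In the analytically irreducible case it takes $l$ and $f\in J_l$ with $v(f)<l\,v(\mathcal I)$. Then $g\mapsto f^{mn}g$ defines a map $R\to J_{lm}^n/I_{lm}^n$ whose kernel lies in $I(v)_{c'mn}$, where $c'=l\,v(\mathcal I)-v(f)>0$. Hence
\[
e(I_{lm})-e(J_{lm})=\lim_{n\rightarrow\infty}\frac{d!\,\ell(J_{lm}^n/I_{lm}^n)}{n^d}\ge\limsup_{n\rightarrow\infty}\frac{d!\,\ell(R/I(v)_{c'mn})}{n^d}.
\]
Now Izumi is used in the correct direction: $I(v)_t\subset\overline{m_R^{\lfloor\gamma t\rfloor}}$ bounds the colength from below. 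Combined with Rees's result that $\ell(R/\overline{m_R^n})$ is eventually a polynomial with leading coefficient $e(m_R)/d!$, this gives $e(I_{lm})-e(J_{lm})\ge(c'\gamma m)^de(m_R)$, a gap that survives division by $m^d$ and passage to the limit.

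If you want to keep your intersection-theoretic route, the positivity has to come from the restriction to $E$ of the sections given by $K_n$. Via initial forms, these contain $\mathrm{in}_v(h)\cdot \left(I(v)_s/I(v)_{s+1}\right)$ with $s=\lceil cn\rceil-v(J_n)\approx (c-b)n$, for $h\in J_n$ with $v(h)=v(J_n)$. You would then need to show that the graded series on $E$ generated by $I(v)_s/I(v)_{s+1}$ has volume at least a constant times $s^{d-1}$. That is a real additional argument, and nothing of the kind appears in your proposal.
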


\begin{proof} We first prove the theorem when $R$ is  analytically irreducible. $\mathcal I\subset \mathcal J$ implies $v(\mathcal J)\le v(\mathcal I)$ for all $v\in V(R)$.
We assume that there exists $v\in V(R)$ such that $v(\mathcal J)<v(\mathcal I)$ and we will derive a contradiction. There exists $l\in \ZZ_{>0}$ such that $\frac{v(J_l)}{l}<v(\mathcal I)$. Thus there exists $f\in J_l\setminus I_l$ such that $v(f)<v(\mathcal I)l$. Let $v(f)=k$. For $m,n\in \ZZ_{\ge 0}$, there exist $R$-module homomorphisms
$$
\phi_{m,n}:R\rightarrow J_{lm}^n/I_{lm}^n,
$$
defined by $g\mapsto f^{mn}g$ for $g\in R$. If $g\in \mbox{Ker }\phi_{m,n}$, then 
$$
v(g)=v(f^{mn}g)-v(f^{mn})\ge v(I_{lm}^n)-mn v(f)\ge mnlv(\mathcal I)-mnk=mn(lv(\mathcal I)-k).
$$
Let $c=lv(\mathcal I)-k>0$. Then $\mbox{Ker }\phi_{m,n}\subset I(v)_{cmn}$ for all $m,n$. As a consequence, we have that
$$
\ell(J_{lm}^n/I_{lm}^n)\ge \ell(R/\mbox{Ker }\phi_{m,n})\ge \ell(R/I(v)_{cmn}).
$$
We have that $\ell(R/I_{lm}^n)-\ell(R/J_{lm}^n))=\ell(J_{lm}^n/I_{lm}^n)$, so
$$
e(I_{lm})-e(J_{lm})=\lim_{n\rightarrow \infty}\frac{d!\ell(J_{lm}^n/I_{lm}^n)}{n^d}.
$$
As shown in the proof of Lemma \ref{Lemma21}, there exists $\gamma\in \QQ_{>0}$ such that $I(v)_t\subset \overline {m_R^{\lfloor t\gamma\rfloor}}$ for all $t\in \ZZ_{>0}$. Thus $\ell(R/I(v)_t)\ge \ell(R/\overline {m_R^{\lfloor t\gamma\rfloor}})$ for all $t\in \ZZ_{>0}$, so that
$$
\ell(J_{lm}^n/I_{lm}^n)\ge \ell(R/\overline {m_R^{\lfloor c\gamma mn\rfloor}})
$$
for all $m,n$. Since $R$ is analytically unramified, there exists $h\in \ZZ_{\ge 0}$ such that $\overline{m_R^n}\subset m_R^{n-h}$ for all $n\in \NN$ by Theorem 1.4 \cite{R1}. Thus by Theorem 1.1 \cite{R2}, there exists a polynomial $h(x)\in \QQ[x]$ of degree $d$ such that $\ell(R/\overline{m^n})=h(n)$ for $n\gg 0$ and the leading coefficient of $h(x)$ is $\frac{e(m_R)}{d!}$. We have that 
$$
\lim_{n\rightarrow\infty}\frac{h(\lfloor c\gamma mn\rfloor)}{n^d}=\lim_{n\rightarrow\infty}\frac{e(m_R)}{d!}\frac{(\lfloor c\gamma mn\rfloor)^d}{n^d}
=\frac{e(m_R)(c\gamma m)^d}{d!}.
$$
Thus $e(I_{lm})-e(J_{lm})\ge (c\gamma m)^de(m_R)
$
for all $m\in \ZZ_{>0}$. We thus have that
$$
e(\mathcal I)-e(\mathcal J)=\lim_{m\rightarrow\infty}\frac{e(I_{lm})}{m^d}-\lim_{m\rightarrow\infty}\frac{e(J_{lm})}{m^d}
\ge \lim_{m\rightarrow\infty}\frac{(c\gamma m)^de(m_R)}{m^d}=(c\gamma)^de(m_R)>0,
$$
giving the desired contradiction,  establishing the theorem in the case that $R$ is analytically irreducible. 

Now suppose that $R$ is a noetherian local ring, with no other restrictions. 
Let $P_1,\ldots,P_r$ be the minimal primes of $\hat R$ such that $\dim \hat R/P=d$. 
$\mathcal I\subset \mathcal J$ implies $\mathcal I(\hat R/P_i)\subset \mathcal J(\hat R/P_i)$, so that $e(\mathcal J(\hat R/P_i))\le e(\mathcal I(\hat R/P_i)$ for all $i$ by (\ref{eq80}).
$$
e(\mathcal J)=e(\mathcal J\hat R)=\sum\ell_{\hat R_{P_i}}e(\mathcal J(\hat R/P_i))\le \sum\ell_{\hat R_{P_i}}e(\mathcal I(\hat R/P_i))=e(\mathcal I\hat R)=e(\mathcal I).
$$
Thus $e(\mathcal J)=e(\mathcal I)$ implies 
$e(\mathcal I(\hat R/P_i))=e(\mathcal J(\hat R/P_i))$ for all $i$, and so for all $i$, $v(\mathcal I(\hat R/P_i))=v(\mathcal J(\hat R/P_i))$ for all $v\in V(\hat R/P_i)$ by the first part of this proof, where the theorem is proven for analytically irreducible local rings. 
Thus $v(\mathcal I)=v(\mathcal J)$ for all $v\in V(R)$ by Lemma \ref{LemVal}.
\end{proof}

The following theorem generalizes Proposition 3.13 \cite{BLQ} from analytically irreducible local rings to arbitrary Noetherian local rings. The proof for the essential case when $R$ is analytically irreducible is from  \cite{BLQ}, which we reproduce for completeness. 

\begin{Theorem}\label{Theorem313} Let $R$ be a $d$-dimensional Noetherian local ring and $\mathcal I$, $\mathcal J$ be graded $m_R$-families. Suppose that $v(\mathcal J)\le v(\mathcal I)$ for all $v\in V(R)$. Then $e(\mathcal J)\le e(\mathcal I)$. 
\end{Theorem}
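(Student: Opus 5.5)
We first reduce to the case that $R$ is analytically irreducible, exactly as in the second half of the proof of Theorem \ref{Theorem312}. By the associativity formula,
$$
e(\mathcal I)=\sum_i \ell_{\hat R_{P_i}}(\hat R_{P_i})\,e(\mathcal I(\hat R/P_i)),
$$
where the sum is over the minimal primes $P_i$ of $\hat R$ with $\dim \hat R/P_i=d$, and the same formula holds for $\mathcal J$. By Lemma \ref{LemVal}, the hypothesis $v(\mathcal J)\le v(\mathcal I)$ for all $v\in V(R)$ gives $w(\mathcal J\hat R/P_i)\le w(\mathcal I\hat R/P_i)$ for all $w\in V(\hat R/P_i)$. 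It therefore suffices to prove the theorem for each $S=\hat R/P_i$, so we assume from now on that $R$ is a complete local domain. In that case $R$ is excellent, and every exceptional prime divisor on the normalized blowup of an $m_R$-primary ideal defines an element of $V(R)$.

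In the analytically irreducible case, the idea is to compare $J_m$ with $I_n$ only along the finitely many divisors that are relevant for $I_n$, and then to use intersection theory. Fix $\epsilon>0$ and $n$ with $e(I_n)/n^d$ close to $e(\mathcal I)$. Let $Y\in\mbox{BirMod}(R)$ be a blowup dominating the normalized blowup of $I_n$ such that $I_n\mathcal O_Y=\mathcal O_Y(-F_n)$. Let $E_1,\ldots,E_s$ be the exceptional prime divisors of $Y$, with valuations $v_i=v_{E_i}\in V(R)$. We choose the blowup so that there is an effective Cartier divisor $H$ with exceptional support such that $-H$ is ample; for instance, we may take $Y$ to be the blowup of $I_nm_R$ and $H$ with $m_R\mathcal O_Y=\mathcal O_Y(-H)$. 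Since there are only finitely many $v_i$, and $v_i(J_m)/m\to v_i(\mathcal J)\le v_i(\mathcal I)\le v_i(I_n)/n$, we can choose $m$ arbitrarily large with
$$
\frac{v_i(J_m)}{m}\le \frac{v_i(I_n)}{n}+\epsilon\,{\rm ord}_{E_i}(H)\quad\mbox{for all } i.
$$
Next let $Z$ dominate both $Y$ and the blowup of $J_m$, let $\pi:Z\rightarrow Y$ be the induced morphism, and write $J_m\mathcal O_Z=\mathcal O_Z(-G_m)$. The inequality above says that the pushforward satisfies $\pi_*(G_m/m)\le F_n/n+\epsilon H$ as Weil divisors on $Y$. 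Because $-G_m$ is $\pi$-nef, the negativity lemma gives that $\pi^*\pi_*(G_m/m)-G_m/m$ is effective. Combining these two facts, we obtain
$$
-\frac{G_m}{m}\ \ge\ -\pi^*\Big(\frac{F_n}{n}+\epsilon H\Big).
$$
Both sides are nef: the left side because it comes from the blowup of $J_m$, and the right side because it is a positive combination of the nef divisor $-F_n/n$ and the ample divisor $-\epsilon H$. Lemma \ref{Lemma4} and Theorem \ref{Theorem20} then give
$$
\frac{e(J_m)}{m^d}=-\Big(\big(-\tfrac{G_m}{m}\big)^d\Big)\le -\Big(\big(-\tfrac{F_n}{n}-\epsilon H\big)^d\Big).
$$
Letting $m\to\infty$ along such $m$, Theorem \ref{Theorem5} shows that $e(\mathcal J)$ is bounded by the right-hand side. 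Expanding by multilinearity, the right-hand side equals $e(I_n)/n^d$ plus terms that are bounded by $\epsilon$ times constants. These constants are controlled, as in the proof of Theorem \ref{Theorem70}, using (\ref{eq80'}) and Lemma \ref{Lemma4}: mixed products of $F_n/n$ and $H$ are dominated by products involving $F_{n_0}/n_0$ for a fixed $n_0$. Letting $\epsilon\to 0$ and then $n\to\infty$ gives $e(\mathcal J)\le e(\mathcal I)$.

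The main obstacle is the negativity lemma step. We need it for projective birational morphisms over an arbitrary complete local domain, and we need $\pi_*(G_m/m)$ and $F_n/n+\epsilon H$ to be comparable as $\RR$-Cartier divisors on $Y$. To handle this, we would first replace $Y$ by its normalization, which is finite since $R$ is excellent. We would then avoid a general negativity lemma by using the relative ampleness of $-G_m$ directly: for $k\gg0$ the sections of $\mathcal O_Z(-kG_m)$ come from $J_m^k$. Our plan is to deduce the divisor inequality from the valuative containment
$$
J_m^{k}\supset \{f\in R\mid v_i(f)\ge k\,m\,(v_i(I_n)/n+\epsilon\,{\rm ord}_{E_i}H)\ \mbox{for all } i\}\cdot(\mbox{a fixed } m_R\mbox{-primary ideal}),
$$
which is exactly the point where the finiteness of the set $\{v_1,\ldots,v_s\}$ is used. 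If this route fails, we would fall back on the original argument of Proposition 3.13 \cite{BLQ}, which the paper says goes through for $e(\mathcal I)$ once Theorem \ref{volmult} is available.
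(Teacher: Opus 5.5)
There is a genuine gap at the central step, the divisor inequality $-G_m/m\ge -\pi^*(F_n/n+\epsilon H)$ on $Z$. The negativity lemma points the other way. Since $-G_m$ is $\pi$-nef, applying it to $B=G_m-\pi^*\pi_*G_m$ (when $\pi_*G_m$ is $\RR$-Cartier) gives $G_m\ge \pi^*\pi_*G_m$. So on the divisors created by $\pi$, the orders of $J_m$ can only exceed what $Y$ predicts. An effective $\RR$-divisor has nonnegative value under every valuation centered on $Z$, so your inequality would imply $w(J_m)/m\le w(I_n)/n+\epsilon\, w(m_R)$ for every $w\in V(R)$, not just for $v_1,\ldots,v_s$. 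Because $Z$ depends on $m$, control of finitely many $v_i$ says nothing about the new divisors.

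This really fails. In $R=k[[x,y]]$ let $\mathcal I=\{m_R^n\}$ and $J_m=(x^{2m})+y\,m_R^m$. One checks $J_aJ_b\subset J_{a+b}$, and $v(\mathcal J)=v(m_R)=v(\mathcal I)$ for all $v\in V(R)$ (use $J_m\subset m_R^m$ and $x^{2m},yx^m,y^{m+1}\in J_m$). Your $Y$ is the blowup of $m_R$, with $F_n=nE_1$ and $H=E_1$. Since $v_{E_1}(J_m)/m=(m+1)/m$, your condition on $m$ holds once $m\ge 1/\epsilon$. But the monomial valuation $w$ with $w(x)=1$, $w(y)=m$ has $w(J_m)/m=2$, while $w(I_n)/n+\epsilon w(m_R)=1+\epsilon$.

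Your fallback containment fails in the same example. We have $x^N\notin J_m^k$ for $N<2km$, whereas the right-hand side contains $x^{\lceil km(1+\epsilon)\rceil+c}$, where $x^c$ lies in the fixed $m_R$-primary ideal. This non-uniformity in $m$ is exactly the difficulty the paper flags in its introduction.

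What is missing is the paper's device for never comparing a full self-intersection of $-G_m/m$ with anything. Working on normalized blowups $X_n$ over a complete domain, it starts from $((-F_{m_0}/m_0)^d)$ with $e(I_{m_0})/m_0^d<e(\mathcal I)+\epsilon$. It then replaces one factor at a time along a tower $m_0\mid m_1\mid\cdots\mid m_d$.

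\begin{itemize}
\item First $-F_{m_0}/m_0$ is replaced by $-F_{m_1}/m_1$. This does not increase $-(\cdots)$, by (\ref{eq80'}) and Lemma \ref{Lemma4}.
\item Then $-F_{m_1}/m_1$ is replaced by $-G_{m_1}/m_1$. The difference $\sum_E\big(v_E(I_{m_1})-v_E(J_{m_1})\big)E/m_1$ is intersected only with nef classes pulled back from $X_{m_0}$. By the projection formula, only the finitely many exceptional prime divisors of the fixed model $X_{m_0}$ contribute, each with nonnegative weight by Lemma \ref{NefLem}.
\item On those finitely many divisors, pointwise convergence $v_E(J_{m_1})/m_1\to v_E(\mathcal J)\le v_E(\mathcal I)\le v_E(I_{m_1})/m_1$ is enough.
\end{itemize}

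After $d$ steps one gets $-\big((-G_{m_1}/m_1)\cdot\ldots\cdot(-G_{m_d}/m_d)\big)\le e(\mathcal I)+(d+1)\epsilon$. Theorem \ref{Theorem70} then concludes, since it expresses $e(\mathcal J)$ as the infimum of such mixed products over independent indices. Your reduction to the complete domain case is the same as the paper's and is fine.
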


\begin{proof} We first prove the theorem in the case that $R$ is a complete local domain, so that it is excellent. We present  the  proof of Proposition 3.12 \cite{BLQ}.

There  exists a sequence of projective birational $R$-morphisms
$$
\cdots \rightarrow X_2\rightarrow X_1\rightarrow \mbox{Spec}(R)
$$
such that each $X_i$ is the normalization of the blowup of an $m_R$-primary ideal in $R$ such that $I_n\mathcal O_{X_n}$ and $J_n\mathcal O_{X_n}$ are invertible, with effective Cartier divisors $F_n$ and $G_n$ on $X_n$ such that $I_n\mathcal O_{X_n}=\mathcal O_{X_n}(-F_n)$ and $J_n\mathcal O_{X_n}=\mathcal O_{X_n}(-G_n)$. 
We will denote the $R$-morphisms in the above sequence by $\pi_{n,m}:X_{n}\rightarrow X_{m}$ for $n>m$. If $\lambda:Y\rightarrow X_n$ is a birational $R$-morphism, we will identify $\lambda^*(F_n)$ and $\lambda^*(G_n)$ with $F_n$ and $G_n$ respectively. 
The  prime Weil divisors of $X_n$ which contract to $m_R$ will be denoted by $\mbox{Exc}(X_n)$. Then as Weil divisors on $X_m$ with $m\ge n$,
$$
F_m=\sum_{E\in {\rm Exc}(X_m)}\frac{v_{E_i}(I_m)}{m}E\mbox{ and }G_m=\sum_{E\in {\rm Exc}(X_m)}\frac{v_{E_i}(J_m)}{m}E.
$$

Let $\epsilon>0$. By Theorems \ref{Theorem20} and  \ref{Theorem70}, there exists $m_0>0$ such that 
\begin{equation}\label{eqR1}
-\left(\left(\frac{-F_{m_0}}{m_0}\right)^d\right)=\frac{e(I_{m_0})}{m_0^d}<e(\mathcal I)+\epsilon.
\end{equation}
If $m_1$ is a multiple of $m_0$, then $\frac{-F_{m_0}}{m_0}\le \frac{-F_{m_1}}{m_1}$ since $I_{m_0}^{\frac{m_1}{m_0}}\subset I_{m_1}$,
 so
\begin{equation}\label{eqR2}
\left(\left(\frac{-F_{m_0}}{m_0}\right)^{d-1}\cdot \frac{F_{m_1}}{m_1}\right)\le \left(\left(\frac{-F_{m_0}}{m_0}\right)^{d-1}\cdot  \frac{F_{m_0}}{m_0}\right)
\end{equation}
 by Lemma \ref{Lemma4}, since $-\frac{F_{m_0}}{m_0}$ and $-\frac{F_{m_1}}{m_1}$ are nef.

\begin{equation}\label{eqRM}
\begin{array}{l}
-\left(\left(-\frac{F_{m_0}}{m_0}\right)^{d-1}\cdot \left(-\frac{F_{m_1}}{m_1}\right)\right)+\left(\left(-\frac{F_{m_0}}{m_0}\right)^{d-1}\cdot\left(\frac{-G_{m_1}}{m_1}\right)\right)\\
=\sum_{E\in{\rm Exc}(X_{m_1})}\left(\frac{v_E(I_{m_1})}{m_1}-\frac{v_E(J_{m_1})}{m_1}\right)\left(E\cdot\left(\frac{-F_{m_0}}{m_0}\right)^{d-1}\right)\\
=\sum_{E\in{\rm Exc}(X_{m_1})}\left(\frac{v_E(I_{m_1})}{m_1}-\frac{v_E(J_{m_1})}{m_1}\right)\left((\pi_{m_1,m_0})_*E\cdot\left(\frac{-F_{m_0}}{m_0}\right)^{d-1}\right)\\
= \sum_{E\in{\rm Exc}(X_{m_0})}\left(\frac{v_E(I_{m_1})}{m_1}-\frac{v_E(J_{m_1})}{m_1}\right)\left(E\cdot\left(\frac{-F_{m_0}}{m_0}\right)^{d-1}\right)
\end{array}
\end{equation}
where the second equality is by the projection formula, Proposition 2.8 \cite{CM}, and the third equality is since $(\pi_{m_1,m_0})_*E=0$ if $\dim \pi_{m_1,m_0}(E)<d-1$.
Given $\delta>0$, for $m_1$ sufficiently large which is divisible by $m_0$,
$\frac{v_E(J_{m_1})}{m_1}\le v(\mathcal J)+\delta$ for all $E\in \mbox{Exc}(X_{m_0})$. Thus 
$$
\frac{v_E(I_{m_1})}{m_1}-\frac{v(J_{m_1})}{m_1}\ge v(\mathcal I)-v(\mathcal J)-\delta\ge -\delta.
$$
We can choose $\delta$ sufficiently small  that 
$$
 \sum_{E\in{\rm Exc}(X_{m_0})}\left(\frac{v_E(I_{m_1})}{m_1}-\frac{v_E(J_{m_1})}{m_1}\right)\left(E\cdot\left(\frac{-F_{m_0}}{m_0}\right)^{d-1}\right)\ge -\epsilon.
 $$
Thus
$$
-\left(\left(\frac{-F_{m_0}}{m_0}\right)^{d-1}\cdot \left(\frac{-G_{m_1}}{m_1}\right)\right)\le-\left(\left(-\frac{F_{m_0}}{m_0}^{d-1}\right)\cdot \left(\frac{-F_{m_1}}{m_1}\right)\right)+\epsilon,
$$
so by (\ref{eqR1}) and (\ref{eqR2}),
$$
-\left(\left(-\frac{F_{m_0}}{m_0}\right)^{d-1}\cdot\left(\frac{-G_{m_1}}{m_1}\right)\right)\le e(\mathcal I)+2\epsilon.
$$
For any multiple $m_2$ of $m_1$, 
$$
-\left(\left(\frac{-F_{m_0}}{m_0}\right)^{d-2}\cdot\left(\frac{-G_{m_1}}{m_1}\right)\cdot \left(\frac{-F_{m_2}}{m_2}\right)\right)\le e(\mathcal I)+2\epsilon
$$
by Lemma \ref{Lemma4}, 
since $\frac{-F_{m_0}}{m_0}\le \frac{-F_{m_2}}{m_2}$   are nef. 

As in (\ref{eqRM}), we have that
$$
\begin{array}{l}
-\left(\left(\frac{-F_{m_0}}{m_0}\right)^{d-2}\cdot\left(\frac{-G_{m_1}}{m_1}\right)\cdot\left(\frac{-F_{m_2}}{m_2}\right)\right)
+\left(\left(\frac{-F_{m_0}}{m_0}\right)^{d-2}\cdot\left(\frac{-G_{m_1}}{m_1}\right)\cdot\left(\frac{-G_{m_2}}{m_2}\right)\right)\\
=\sum_{E\in{\rm Exc}(X_{m_2})}\left(\frac{v_E(I_{m_2})}{m_2}-\frac{v_E(J_{m_2})}{m_2}\right)\left(E\cdot \left(\frac{-F_{m_0}}{m_0}\right)^{d-2}\cdot \left(\frac{-G_{m_1}}{m_1}\right)\right)\\
=\sum_{E\in{\rm Exc}(X_{m_1})}\left(\frac{v_E(I_{m_2})}{m_2}-\frac{v_E(J_{m_2})}{m_2}\right)\left(E\cdot \left(\frac{-F_{m_0}}{m_0}\right)^{d-2}\cdot \left(\frac{-G_{m_1}}{m_1}\right)\right)\\
\end{array}
$$
and we may choose $m_2$ sufficiently large and divisible by $m_1$ so that 
$$
\begin{array}{l}
-\left(\left(\frac{-F_{m_0}}{m_0}\right)^{d-2}\cdot\left(\frac{-G_{m_1}}{m_1}\right)\cdot\left(\frac{-G_{m_2}}{m_2}\right)\right)\\
\le -\left(\left(\frac{-F_{m_0}}{m_0}\right)^{d-2}\cdot\left(\frac{-G_{m_1}}{m_1}\right)\cdot\left(\frac{-F_{m_2}}{m_2}\right)\right)+\epsilon.
\end{array}
$$
Thus 
$$
-\left(\left(\frac{-F_{m_0}}{m_0}\right)^{d-2}\cdot\left(\frac{-G_{m_1}}{m_1}\right)\cdot\left(\frac{-G_{m_2}}{m_2}\right)\right)\le e(\mathcal I)+3\epsilon.
$$
We obtain by induction that for sufficiently large $m_3,\ldots, m_d$ where $m_{i-1}$ divides $m_i$ for all $i$,
$$
-\left(\left(\frac{-G_{m_1}}{m_1}\right)\cdot \left(\frac{-G_{m_2}}{m_2}\right)\cdot\ldots\cdot \left(\frac{-G_{m_d}}{m_d}\right)\right)\le e(\mathcal I)+(d+1)\epsilon.
$$
By Theorem \ref{Theorem70} and Theorem \ref{Theorem20},
$$
e(\mathcal J)=\inf_{n_1,n_2,\ldots,n_d}-\left(\left(\frac{-G_{n_1}}{n_1}\right)\cdot \left(\frac{-G_{n_2}}{n_2}\right)\cdot\ldots\cdot \left(\frac{-G_{n_d}}{n_d}\right)\right).
$$
We obtain that $e(\mathcal J)<e(\mathcal I)+(d+1)\epsilon$ for all $\epsilon>0$. Thus $e(\mathcal J)\le e(\mathcal I)$, establishing the theorem for complete domains.

Now assume that $R$ is an arbitrary Noetherian local ring. 
Let $P_1,\ldots,P_r$ be the minimal primes of $\hat R$ such that $\dim \hat R/P_i=d$. $v(\mathcal J)\le v(\mathcal I)$ for all $v\in V(R)$ implies that $v(\mathcal J(\hat R/P_i))\le v(\mathcal I(\hat R/P_i))$ for all $v\in V(\hat R/P_i)$ and all $i$ by Lemma \ref{LemVal}. Thus $e(\mathcal J(\hat R/P_i))\le e(\mathcal I(\hat R/P_i))$ for all $i$ by the first part of the proof. Thus
$$
e(\mathcal J)=e(\mathcal J\hat R)=\sum\ell_{\hat R_{P_i}}(\hat R_{P_i})e(\mathcal J(\hat R/P_i))\le\sum\ell_{\hat R_{P_i}}(\hat R_{P_i})e(\mathcal I(\hat R/P_i))=e(\mathcal I\hat R)=e(\mathcal I).
$$
\end{proof}

We  now give the  proof of  Theorem \ref{Theorem14*}, stated in the introduction. For the readers convenience, we restate it here. It is proven for the essential case of analytically irreducible local rings in \cite{BLQ}.

\begin{Theorem}(Theorem \ref{Theorem14*}\  of the introduction) \label{Theorem14} Suppose that $R$ is a Noetherian local ring and $\mathcal I\subset \mathcal J$ are graded $m_R$-families. Then $e(\mathcal I)=e(\mathcal J)$ if and only if $\tilde{\mathcal I}=\tilde{\mathcal J}$. In particular, $\tilde{\mathcal I}$ is the largest $m_R$-filtration $\mathcal J$ such that $\mathcal I\subset \mathcal J$ and $e(\mathcal J)=e(\mathcal I)$.
\end{Theorem}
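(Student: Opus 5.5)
The plan is to combine Theorem \ref{Theorem312} and Theorem \ref{Theorem313}, passing through the valuation invariants $v(\mathcal I)$. The whole argument rests on one observation, which I prove first: for every graded $m_R$-family $\mathcal I$ and every $v\in V(R)$,
$$
v(\tilde{\mathcal I})=v(\mathcal I).
$$
Since $\mathcal I\subset\tilde{\mathcal I}$, we get $v(\tilde{\mathcal I})\le v(\mathcal I)$. Conversely, every $f\in\tilde I_n$ satisfies $v(f)\ge nv(\mathcal I)$, so $v(\tilde I_n)/n\ge v(\mathcal I)$ for all $n$. Taking the infimum gives the reverse inequality.

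Next I record that $e(\tilde{\mathcal I})=e(\mathcal I)$. First, $\tilde{\mathcal I}$ is itself a graded $m_R$-family. It is $m_R$-primary because $I_n\subset\tilde I_n$. It is multiplicative because $v(fg)=v(f)+v(g)$. The inclusion $\mathcal I\subset\tilde{\mathcal I}$ gives $e(\tilde{\mathcal I})\le e(\mathcal I)$ by (\ref{eq80}). The equality $v(\mathcal I)=v(\tilde{\mathcal I})$ for all $v$ lets us apply Theorem \ref{Theorem313} with the roles arranged so that $e(\mathcal I)\le e(\tilde{\mathcal I})$. Hence $e(\tilde{\mathcal I})=e(\mathcal I)$.

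The two implications then go as follows.

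\emph{Forward direction.} Suppose $\mathcal I\subset\mathcal J$ and $e(\mathcal I)=e(\mathcal J)$. Theorem \ref{Theorem312} gives $v(\mathcal I)=v(\mathcal J)$ for all $v\in V(R)$. Since $\tilde I_n$ and $\tilde J_n$ are defined purely in terms of the numbers $v(\mathcal I)$ and $v(\mathcal J)$, it follows that $\tilde{\mathcal I}=\tilde{\mathcal J}$.

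\emph{Reverse direction.} Suppose $\tilde{\mathcal I}=\tilde{\mathcal J}$. The observation above gives $v(\mathcal I)=v(\tilde{\mathcal I})=v(\tilde{\mathcal J})=v(\mathcal J)$ for all $v$. Applying Theorem \ref{Theorem313} in both directions then gives $e(\mathcal I)=e(\mathcal J)$. (The hypothesis $\mathcal I\subset\mathcal J$ is not even needed here.)

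For the ``in particular'' statement, $\tilde{\mathcal I}$ contains $\mathcal I$ and satisfies $e(\tilde{\mathcal I})=e(\mathcal I)$ by the second paragraph. Now let $\mathcal J\supset\mathcal I$ be any $m_R$-filtration with $e(\mathcal J)=e(\mathcal I)$. The forward direction gives $\tilde{\mathcal J}=\tilde{\mathcal I}$, so $\mathcal J\subset\tilde{\mathcal J}=\tilde{\mathcal I}$. Thus $\tilde{\mathcal I}$ is the largest such filtration.

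The real work has already been done in Theorems \ref{Theorem312} and \ref{Theorem313}. The only point needing care here is the identity $v(\tilde{\mathcal I})=v(\mathcal I)$ and the check that $\tilde{\mathcal I}$ is a legitimate graded $m_R$-family, including the convention $\tilde I_0=R$ in Definition \ref{SatDef}. I expect this to be routine.
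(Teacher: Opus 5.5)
Your proof is correct and follows the paper's route, which is exactly the two-line combination of Theorem~\ref{Theorem312} for the forward direction and Theorem~\ref{Theorem313} for the reverse. Your extras are also sound: you make explicit the identity $v(\tilde{\mathcal I})=v(\mathcal I)$, which the paper uses tacitly, and you prove the ``in particular'' clause via $e(\tilde{\mathcal I})=e(\mathcal I)$, which the paper leaves unargued (it also needs $\tilde I_{n+1}\subset\tilde I_n$, immediate from $v(\mathcal I)\ge 0$).
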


\begin{proof}
If  $\tilde {\mathcal J}=\tilde{\mathcal I}$ then $v(\mathcal I)=v(\mathcal J)$ for all $v\in V(R)$ which implies $e(\mathcal I)= e(\mathcal J)$ by Theorem \ref{Theorem313}. 

If $e(\mathcal J)=e(\mathcal I)$, then $v(\mathcal I)=v(\mathcal J)$ for all $v\in V(R)$ by Theorem \ref{Theorem312} which implies $\tilde{\mathcal I}=\tilde{\mathcal J}$. \end{proof}

\begin{Corollary}(Rees \cite{R}, Theorem 11.3.1 \cite{HS}, Theorem \ref{ReesThm} of Subsection \ref{ReesSubSec} of the introduction)\label{ReesThmpf} Suppose that $R$ is a formally equidimensional $d$-dimensional local ring and $I\subset J$ are $m_R$-primary ideals. Then $J\subset \overline{I}$ if and only if $e(J)=e(I)$.
\end{Corollary}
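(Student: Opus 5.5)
We deduce the corollary from Theorem \ref{Theorem14} applied to the adic filtrations $\mathcal I=\{I^n\}$ and $\mathcal J=\{J^n\}$. Since $I\subset J$, we have $\mathcal I\subset\mathcal J$. Also $e(I^n)=n^de(I)$, so $e(\mathcal I)=e(I)$ and likewise $e(\mathcal J)=e(J)$. Theorem \ref{Theorem14} then says that $e(I)=e(J)$ if and only if $\tilde{\mathcal I}=\tilde{\mathcal J}$. It remains to show that, on a formally equidimensional ring, $\tilde{\mathcal I}=\tilde{\mathcal J}$ if and only if $J\subset\overline I$.

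For this we use the Rees valuation description recalled in Subsection \ref{SubSecVal}. When $R$ is formally equidimensional, the Rees valuations $v_1,\ldots,v_r$ of $I$ lie in $V(R)$, and
$$
\overline{I^n}=\{f\in R\mid v_i(f)\ge n v_i(I)\mbox{ for }1\le i\le r\}.
$$
For an adic filtration, $v(\mathcal I)=\inf_n v(I^n)/n=v(I)$. Taking $n=1$ gives $\tilde I_1\subset\{f\mid v_i(f)\ge v_i(I)\ \forall i\}=\overline I$. Conversely, if $f\in\overline I$, then for every valuation $v$ we have $v(f)\ge v(I)$, since integral dependence is preserved by valuations (as in the argument of Lemma \ref{SatInt}). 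Also $\overline I\subset m_R$. Hence $\tilde I_1=\overline I$, and the same argument gives $\tilde J_1=\overline J$.

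\emph{($\Leftarrow$)} Suppose $\tilde{\mathcal I}=\tilde{\mathcal J}$. Then $J\subset\overline J=\tilde J_1=\tilde I_1=\overline I$.

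\emph{($\Rightarrow$)} Suppose $J\subset\overline I$. For every $v\in V(R)$ we get $v(J)\ge v(\overline I)=v(I)$. Combined with $v(J)\le v(I)$, which follows from $I\subset J$, this gives $v(\mathcal J)=v(\mathcal I)$ for all $v\in V(R)$, so $\tilde{\mathcal I}=\tilde{\mathcal J}$ directly from Definition \ref{SatDef}. Alternatively, one can skip the saturations in this direction: equality of the $v$-values gives $e(\mathcal J)\le e(\mathcal I)$ by Theorem \ref{Theorem313}, and $\mathcal I\subset\mathcal J$ gives the reverse inequality by (\ref{eq80}).

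The main point requiring care is the identification $\tilde I_1=\overline I$. It uses that the Rees valuations of $I$ belong to $V(R)$, which is exactly where formal equidimensionality enters (Proposition 10.2.6 \cite{HS}). We also need the inequality $v(f)\ge v(I)$ for all $v\in V(R)$ when $f\in\overline I$, including valuations $v$ with infinite values on a prime. That case follows from the same computation as in Lemma \ref{SatInt}.
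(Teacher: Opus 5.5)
Your proof is correct and follows the same route as the paper: apply Theorem \ref{Theorem14} to the adic filtrations $\{I^n\}$ and $\{J^n\}$, and use that the Rees valuations of $I$ lie in $V(R)$ because $R$ is formally equidimensional. The difference is only in packaging: you compare saturations through the identification $\tilde I_1=\overline I$, while the paper compares the values $v(I)$ and $v(J)$ over $V(R)$ and over the Rees valuations of $I$; your write-up also makes explicit the step that $J\subset\overline I$ forces $v(J)=v(I)$ for every $v\in V(R)$, which the paper leaves implicit.
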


\begin{proof}  Let $\mathcal I=\{I^n\}$ and $\mathcal J=\{J^n\}$. Then $e(\mathcal I)=e(I)$ and $e(\mathcal J)=e(J)$.
For $v\in V(R)$, $v(\mathcal I)=v(I)$ and $v(\mathcal J)=v(J)$. Thus $e(J)=e(I)$ if and only if $v(I)=v(J)$ for all $v\in V(R)$ by Theorem  \ref{Theorem14} , which is equivalent to $v(I)=v(J)$ for all Rees valuations $v$ of $I$, since all Rees valuations of $R$ are in $V(R)$ as $R$ is formally equidimensional.
This is equivalent to $J\subset \overline I$.
\end{proof}

Theorem \ref{Theorem13} is a consequence of the following Corollary.
\begin{Corollary}\label{ReesInt} Suppose that $R$ is a $d$-dimensional Noetherian local ring, $\mathcal I\subset \mathcal J$ are graded $m_R$-families and $\oplus_{n\ge 0}  J_n$ is integral over $\oplus_{n\ge 0}  I_n$. Then $e(\mathcal J)=e(\mathcal I)$.
\end{Corollary}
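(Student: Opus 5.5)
The plan is to reduce the statement to Theorem \ref{Theorem14}: it suffices to show $\tilde{\mathcal I}=\tilde{\mathcal J}$. Equivalently, we show $v(\mathcal I)=v(\mathcal J)$ for every $v\in V(R)$. Since $\mathcal I\subset\mathcal J$, we already have $v(\mathcal J)\le v(\mathcal I)$, so only the reverse inequality $v(\mathcal I)\le v(\mathcal J)$ has to be proved, for each fixed $v\in V(R)$.

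The integrality hypothesis is homogeneous, since both rings are graded. Fix $n>0$ and $f\in J_n$. Then $ft^n$ is integral over $\oplus_{k\ge 0}I_kt^k$. Taking the degree $rn$ component of an integral equation, we get a relation
$$
f^r+a_1f^{r-1}+\cdots+a_r=0,\qquad a_i\in I_{in}.
$$
Now argue exactly as in the proof of Lemma \ref{SatInt}. If $v(f)=\infty$ there is nothing to show. Otherwise, some $i\ge 1$ satisfies $rv(f)\ge v(a_i)+(r-i)v(f)$, so
$$
iv(f)\ge v(a_i)\ge v(I_{in})\ge in\,v(\mathcal I).
$$
The last inequality holds because $v(\mathcal I)=\inf_m v(I_m)/m$ by definition (\ref{ex2*}). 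Hence $v(f)\ge n v(\mathcal I)$ for all $f\in J_n$, so $v(J_n)/n\ge v(\mathcal I)$ for every $n$. Taking the infimum over $n$ gives $v(\mathcal J)\ge v(\mathcal I)$.

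Therefore $v(\mathcal I)=v(\mathcal J)$ for all $v\in V(R)$, so $\tilde{\mathcal I}=\tilde{\mathcal J}$ by Definition \ref{SatDef}. Theorem \ref{Theorem14} then gives $e(\mathcal I)=e(\mathcal J)$; alternatively, apply Theorem \ref{Theorem313} in both directions.

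The only mildly delicate point is the valuation-theoretic step when $v$ is a valuation on $R/Q$ with $Q$ a minimal prime, so that elements may have value $\infty$. As in Lemma \ref{SatInt}, elements of value $\infty$ trivially satisfy the bound, and for finite $v(f)$ the non-archimedean inequality applies verbatim. Otherwise the argument is routine, so I expect no real obstacle.
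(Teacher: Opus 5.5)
Your proof is correct and is essentially the paper's argument. The paper cites Lemma \ref{SatInt} to get $\mathcal I\subset\mathcal J\subset\tilde{\mathcal I}$, which yields $v(\mathcal J)=v(\mathcal I)$ for all $v\in V(R)$, and then invokes Theorem \ref{Theorem14}; you instead rerun that lemma's integral-equation valuation estimate directly for $\mathcal J$ over $\mathcal I$, with a valid homogeneous equation ($a_i\in I_{in}$) and correct handling of $v(f)=\infty$.
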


\begin{proof} $\mathcal J\subset \tilde{\mathcal I}$ by Lemma \ref{SatInt}.
\end{proof}

\begin{Corollary}\label{ReesDiv} Suppose that $\mathcal I\subset \mathcal J$, $e(\mathcal I)=e(\mathcal J)$ and $\tilde{\mathcal I}=\overline{\mathcal I}$. Then $\oplus_{n\ge 0}J_n$ is integral over $\oplus_{n\ge 0}I_n$.
\end{Corollary}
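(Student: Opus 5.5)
The plan is to combine Theorem \ref{Theorem14} with the hypothesis $\tilde{\mathcal I}=\overline{\mathcal I}$, and then use the description of $\overline{\mathcal I}$ as the integral closure of the Rees algebra $R[\mathcal I]=\oplus_{n\ge 0}I_n$ in $R[t]$.

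First, since $\mathcal I\subset\mathcal J$ and $e(\mathcal I)=e(\mathcal J)$, Theorem \ref{Theorem14} gives $\tilde{\mathcal I}=\tilde{\mathcal J}$. Next, we always have the chain of inclusions $\mathcal J\subset\overline{\mathcal J}\subset\tilde{\mathcal J}$; concretely, $J_n\subset\tilde J_n$ because $v(J_n)\ge n v(\mathcal J)$ for every $v\in V(R)$, by the definition of $v(\mathcal J)$ as an infimum. Combining this with the hypothesis gives
$$
J_n\subset \tilde J_n=\tilde I_n=I^*_n \quad\mbox{for all } n,
$$
where $\overline{\mathcal I}=\{I^*_n\}$.

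It then remains to recall from Subsection \ref{SubSecVal} that $\sum_{n\ge 0} I^*_nt^n$ is exactly the integral closure of $\sum_{n\ge0}I_nt^n$ in $R[t]$; this is the extension of \cite[Lemma 5.6]{C8} stated there. Hence every homogeneous element $ft^n$ with $f\in J_n$ is integral over $R[\mathcal I]$. Since $\oplus_{n\ge 0}J_n$ is generated as an $R[\mathcal I]$-algebra by such homogeneous elements, and integral elements form a ring, $\oplus_{n\ge0}J_n$ is integral over $\oplus_{n\ge 0}I_n$.

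No step is a real obstacle. The only point needing care is the identification of $\overline{\mathcal I}$ with the integral closure of the Rees algebra of the graded family, as opposed to the family $\{\overline{I_n}\}$. That identification is quoted in the introduction, so I will invoke it directly.
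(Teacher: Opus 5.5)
Your proposal is correct and is the argument the paper intends. The paper gives no written proof of this corollary; it is meant as an immediate consequence of Theorem \ref{Theorem14}, parallel to the one-line proof of Corollary \ref{ReesInt}: $\mathcal J\subset\tilde{\mathcal J}=\tilde{\mathcal I}=\overline{\mathcal I}$, and $\oplus_{n\ge 0}I^*_nt^n$ is the integral closure of $R[\mathcal I]$ in $R[t]$. Your detour through generation by homogeneous elements is harmless but unnecessary, since $\oplus_{n\ge 0}J_nt^n\subset\oplus_{n\ge 0}I^*_nt^n$ already lies inside that integral closure.
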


Theorem 1.2 of \cite{C8} (Theorem \ref{NewRBT} of SubSection \ref{ReesSubSec} of the Introduction) follows from this corollary and the following lemma.

\begin{Lemma} \label{DivSat}
Suppose that $\mathcal I$ is a divisorial filtration. Then
 $\mathcal I=\tilde{\mathcal I}$.
\end{Lemma}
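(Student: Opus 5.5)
The plan is to write $\mathcal I$ in its defining form and prove the two inclusions $\mathcal I\subset\tilde{\mathcal I}$ and $\tilde{\mathcal I}\subset\mathcal I$. Since $\mathcal I$ is divisorial, there are $v_1,\ldots,v_s\in V(R)$ and $a_1,\ldots,a_s\in\RR_{\ge 0}$ with
$$
I_n=\{f\in R\mid v_i(f)\ge na_i\mbox{ for }1\le i\le s\}.
$$
The inclusion $\mathcal I\subset\tilde{\mathcal I}$ is general. If $f\in I_n$ with $n>0$ and $v\in V(R)$, then $v(f)\ge v(I_n)\ge n\inf_m\frac{v(I_m)}{m}=nv(\mathcal I)$. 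We also have $I_n\subset m_R$, because $I_n$ is $m_R$-primary and proper. So $f\in\tilde I_n$; this is the inclusion $\mathcal I\subset\overline{\mathcal I}\subset\tilde{\mathcal I}$ already noted after Definition \ref{SatDef}.

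For the reverse inclusion, let $f\in\tilde I_n$. Then $v_i(f)\ge nv_i(\mathcal I)$ for each $i$, so it suffices to show $v_i(\mathcal I)\ge a_i$ for $1\le i\le s$. By the definition of $I_m$, every $g\in I_m$ satisfies $v_i(g)\ge ma_i$, so $v_i(I_m)\ge ma_i$. Hence $\frac{v_i(I_m)}{m}\ge a_i$ for all $m>0$, and taking the infimum gives $v_i(\mathcal I)\ge a_i$. Therefore $v_i(f)\ge nv_i(\mathcal I)\ge na_i$ for all $i$, which gives $f\in I_n$. In fact $v_i(\mathcal I)=a_i$ for $a_i>0$, as in the argument of Section \ref{Notdivex}, but we only need the inequality.

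The main point needing care is the degenerate index $n=0$ and the convention in Definition \ref{SatDef} that $\tilde I_n\subset m_R$. We set $\tilde I_0=R=I_0$, as for any filtration. For $n>0$, if all $a_i=0$ then the formula gives $I_n=R$, which is not $m_R$-primary. So we interpret divisorial $m_R$-filtrations as requiring $I_n\subset m_R$ for $n>0$, that is, intersecting with $m_R$ as the definition of $\tilde{\mathcal I}$ does. With this convention both sides are subsets of $m_R$ and the two inclusions give $\mathcal I=\tilde{\mathcal I}$.
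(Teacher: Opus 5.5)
Your proof is correct and follows the paper's argument: $I_n\subset\tilde I_n$ holds for any family, and the reverse inclusion comes from $a_i\le v_i(\mathcal I)$, obtained by taking the infimum of $v_i(I_m)/m\ge a_i$, while your treatment of $n=0$ and of the condition $f\in m_R$ in Definition \ref{SatDef} only makes explicit a convention the paper leaves implicit. One aside, which you do not use, is wrong: $v_i(\mathcal I)=a_i$ need not hold when $a_i>0$ (a redundant constraint, e.g.\ the same valuation listed twice with coefficients $1$ and $1/2$, gives $v_i(\mathcal I)\ge 1>1/2$), and Section \ref{Notdivex} only proves the inequality $a_i\le v_i(\mathcal I)$.
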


\begin{proof}
Let $\mathcal I=\{I_n\}$. 
There exists $v_1,\ldots,v_r\in V(R)$ and $a_1,\ldots, a_r\in \RR_{>0}$ such that 
$$
I_n=\{f\in R\mid v_i(f)\ge na_i\mbox{ for }1\le i\le r\}.
$$ 
Now $\frac{v_i(I_n)}{n}\ge a_i$ for all $i$ and $n$ and $\inf \frac{v_i(I_n)}{n}=v_i(\mathcal I)$ implies $a_i\le v_i(\mathcal I)$ for all $i$. Thus
$$
\{f\in R\mid v_i(f)\ge nv_i(\mathcal I)\mbox{ for }1\le i\le r\}\subset I_n.
$$
Therefore
$$
I_n\subset \tilde I_n =\cap_{v\in V(R)}\{f\in R\mid v(f)\ge nv(\mathcal I)\}
\subset \{f\in R\mid v_i(f)\ge nv_i(\mathcal I)\mbox{ for }1\le i\le r\}\subset I_n
$$
for all $n$, and so $\tilde{\mathcal I}=\mathcal I$. 
\end{proof}

\section{Minkowski inequality and equality}\label{SecMink} In this section we prove very general Minkowski inequalities and the characterization of equality, for graded $m_R$-families, generalizing classical results for ideals by Teissier \cite{T1}, \cite{T2}, \cite{T3},  Rees and Sharp \cite{RS} and Katz \cite{Ka}. Some history and discussion of Minkowski inequalities for ideals and graded families is given in Subsection \ref{SubsecMink} of the introduction.

\begin{Theorem}\label{MinkIneq}(Minkowski Inequalities)  Suppose that $R$ is a Noetherian $d$-dimensional  local ring and  $\mathcal I=\{I_n\}$ and $\mathcal J=\{J_n\}$ are graded $m_R$-families. Let $e_i=e(\mathcal I^{[d-i]},\mathcal J^{[i]})$ for $0\le i\le d$.
Then 
\begin{enumerate}
\item[1)] $e_i^2\le e_{i-1}e_{i+1}$  for $1\le i\le d-1$,
\item[2)] $e_ie_{d-i}\le e_0e_d$ for $0\le i\le d$,
\item[3)]  $e_i^d\le e_0^{d-i}e_d^i$ for $0\le i\le d$,
\item[4)] $e(\mathcal I\mathcal J)^{\frac{1}{d}}\le e_0^{\frac{1}{d}}+e_d^{\frac{1}{d}}$
where $\mathcal I\mathcal J=\{I_nJ_n\}$.
\end{enumerate}
\end{Theorem}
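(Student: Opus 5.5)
The plan is to deduce the inequalities for graded families from the classical Minkowski inequalities for $m_R$-primary ideals by passing to the limit. The mixed multiplicities of families are limits of normalized mixed multiplicities of ideals, and the inequalities are closed conditions.

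By formula (\ref{eqIn15}) applied with $r=2$, $\mathcal I(1)=\mathcal I$, $\mathcal I(2)=\mathcal J$,
$$
e_i=e(\mathcal I^{[d-i]},\mathcal J^{[i]})=\lim_{n\rightarrow\infty}\frac{e(I_n^{[d-i]},J_n^{[i]})}{n^d}.
$$
For each fixed $n$, the ideals $I_n$ and $J_n$ are $m_R$-primary. So the classical inequalities of Teissier and Rees--Sharp apply to $e_i(n):=e(I_n^{[d-i]},J_n^{[i]})$:
$$
e_i(n)^2\le e_{i-1}(n)e_{i+1}(n),\quad e_i(n)e_{d-i}(n)\le e_0(n)e_d(n),\quad e_i(n)^d\le e_0(n)^{d-i}e_d(n)^i,
$$
together with $e(I_nJ_n)^{1/d}\le e(I_n)^{1/d}+e(J_n)^{1/d}$. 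These hold in an arbitrary Noetherian local ring (Rees--Sharp, or \cite{HS} Chapter 17). Both sides of each inequality in 1)--3) are homogeneous of the same total degree in the $e_j(n)$. We therefore divide by the appropriate power of $n^d$ and let $n\rightarrow\infty$. Since all the limits exist by Theorem \ref{Theorem70} and (\ref{eqIn15}), and products and powers are continuous, we obtain 1), 2) and 3).

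For 4), note that $e(\mathcal I)=e_0$ and $e(\mathcal J)=e_d$ by Theorem \ref{Theorem5} and (\ref{eqIn15}). Also $e(\mathcal I\mathcal J)=\lim_n e(I_nJ_n)/n^d$ exists by Theorem \ref{Theorem5}, since $\{I_nJ_n\}$ is a graded $m_R$-family. Dividing the ideal inequality by $n$ and taking limits gives 4). Alternatively, expand via Theorem \ref{Minkcoef*} with $n_1=n_2=1$:
$$
e(\mathcal I\mathcal J)=\sum_i\binom{d}{i}e_i\le\sum_i\binom{d}{i}e_0^{(d-i)/d}e_d^{i/d}=(e_0^{1/d}+e_d^{1/d})^d,
$$
using 3) for the inequality.

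The main point needing care is that the classical inequalities are available for ideals in an arbitrary (not necessarily formally equidimensional or reduced) Noetherian local ring. If one prefers to avoid citing this, there are two routes. One is to reduce via the associativity formula to the complete domains $\hat R/P$, using $e(\mathcal I)=\sum\ell(\hat R_P)e(\mathcal I(\hat R/P))$ termwise. The inequality 1) is preserved under such nonnegative sums by Cauchy--Schwarz, and 2)--3) follow from 1) by the standard log-concavity argument. The other is to prove 1) directly from the intersection-product description (\ref{eqIn16}) via a Khovanskii--Teissier/Hodge index inequality for nef divisors on $Y\in\mbox{BirMod}(R)$. I would take the first route, citing \cite{RS} for ideals, and derive 2) and 3) from 1) formally: the sequence $\log e_i$ is concave, with the degenerate case where some $e_i=0$ handled separately.
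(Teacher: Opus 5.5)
Your proof is correct and is essentially the paper's own. The paper applies the ideal inequalities (Theorem 17.7.2 and Corollary 17.7.3 of \cite{HS}, valid in any Noetherian local ring) to $I_n,J_n$, normalizes, and passes to the limit using (\ref{eqIn15}) for 1)--3) and Theorem~\ref{multlim} for 4), so your fallback reduction to $\hat R/P$ is unnecessary; if you use it anyway, note that 1) makes $\log e_i$ convex (not concave) in $i$, and it is convexity that yields 2) and 3).
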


\begin{proof} All of the inequalities of the theorem hold between the ideals $I_n$ and $J_n$ for all $n\in \ZZ_{>0}$, with respect to ordinary multiplicities and mixed multiplicities of $m_R$-primary ideals by Theorem 17.7.2 and Corollary 17.7.3 \cite{HS}.

In particular, 
$$
e(I_nJ_n)^{\frac{1}{d}}\le e(I_n)^{\frac{1}{d}}+e(J_n)^{\frac{1}{d}}
$$
for all $n$. Thus
$$
\left(\frac{e(I_nJ_n)}{n^d}\right)^{\frac{1}{d}}\le \left(\frac{e(I_n)}{n^d}\right)^{\frac{1}{d}}+\left(\frac{e(J_n)}{n^d}\right)^{\frac{1}{d}}
$$
for all $n$, and taking the limit as $n$ goes to infinity, we obtain by Theorem \ref{multlim} that
$$
\left(\lim_{n\rightarrow\infty}\frac{e(I_nJ_n)}{n^d}\right)^{\frac{1}{d}}\le \left(\lim_{n\rightarrow\infty}\frac{e(I_n)}{n^d}\right)^{\frac{1}{d}}+\left(\lim_{n\rightarrow\infty}\frac{e(J_n)}{n^d}\right)^{\frac{1}{d}},
$$
giving the  Minkowski inequality 4).  

By (\ref{eqIn15}), 
$$
e_i=e(\mathcal I^{[d-i]},\mathcal J^{[i]})=\lim_{n\rightarrow\infty} \frac{e(I_n^{[d-i]},J_n^{[i]})}{n^d}
$$
and the first three inequalities then follow from  the Minkowski inequalities for $m_R$-primary ideals.
\end{proof}

\begin{Remark}\label{RemMink} Suppose that $d=\dim R\ge 2$ and $\mathcal I$ and $\mathcal J$ are graded $m_R$-families with $e(\mathcal I)>0$ and $e(\mathcal J)>0$. If equality holds in one of the Minkowski inequalities of Theorem \ref{MinkIneq}, then equality holds in all four of the Minkowski inequalities.
\end{Remark}

The remark follows (for instance) from the analysis of Section 9 \cite{C8}.

The following theorem is proven when $R$ is analytically irreducible or an  excellent local domain in Theorem 10.3 \cite{C8} (stated in Theorem \ref{BrunnMink} of the introduction), using the theory of Okounkov bodies and the Brunn-Minkowski theorem of convex geometry. Our proof of Theorem \ref{Mink} is by reduction to the assumptions of Theorem \ref{BrunnMink}.

\begin{Theorem}\label{Mink}(Theorem \ref{Mink*} of the introduction) Suppose that $R$ is a $d$-dimensional Noetherian local ring with $d\ge 2$, and $\mathcal I$, $\mathcal J$ are graded $m_R$-families. Then the Minkowski equality 
$$
e(\mathcal I\mathcal J)^{\frac{1}{d}}=e(\mathcal I)^{\frac{1}{d}}+e(\mathcal J)^{\frac{1}{d}}
$$
holds if and only if 
$$
e(\mathcal J)^{\frac{1}{d}}v(\mathcal I)=e(\mathcal I)^{\frac{1}{d}}v(\mathcal J)
$$
for all $v\in V(R)$.
\end{Theorem}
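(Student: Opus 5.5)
The plan is to prove the two directions separately. The converse direction ($\Leftarrow$) reduces to Theorem \ref{Theorem313} together with the twist identities (\ref{eq51}) and (\ref{eq50}). The forward direction ($\Rightarrow$) reduces to Theorem \ref{BrunnMink} on the analytically irreducible rings $\hat R/P_i$, using the associativity formula and the equality case of the ordinary Minkowski ($\ell^d$-norm) inequality.

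\emph{Preliminary fact.} First I will record that $v(\mathcal I\mathcal J)=v(\mathcal I)+v(\mathcal J)$ for every $v\in V(R)$. Indeed, $v(I_nJ_n)=v(I_n)+v(J_n)$ for each $n$. The sequence $a_n=v(I_n)$ is subadditive because $I_mI_n\subset I_{m+n}$, so Fekete's lemma gives $\inf_n v(I_n)/n=\lim_n v(I_n)/n$, and likewise for $\mathcal J$ and $\mathcal I\mathcal J$. Hence the limits add.

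\emph{Converse direction.} Assume $e(\mathcal J)^{1/d}v(\mathcal I)=e(\mathcal I)^{1/d}v(\mathcal J)$ for all $v\in V(R)$.
\begin{enumerate}
\item[(a)] Suppose $e(\mathcal I)=0$. Then $v(\mathcal I)=0$ for all $v\in V(R)$ by Lemma \ref{LemmaLB}. So $v(\mathcal I\mathcal J)=v(\mathcal J)$ for all $v$, and two applications of Theorem \ref{Theorem313} give $e(\mathcal I\mathcal J)=e(\mathcal J)$, which is the Minkowski equality. The case $e(\mathcal J)=0$ is symmetric.
\item[(b)] Suppose both multiplicities are positive. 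Put $c=(e(\mathcal J)/e(\mathcal I))^{1/d}$. The hypothesis says $v(\mathcal J)=c\,v(\mathcal I)$, so $v(\mathcal I\mathcal J)=(1+c)v(\mathcal I)=v(\mathcal I^{(1+c)})$ by (\ref{eq51}). Theorem \ref{Theorem313}, applied in both directions, gives $e(\mathcal I\mathcal J)=e(\mathcal I^{(1+c)})=(1+c)^de(\mathcal I)$ by (\ref{eq50}). This equals $(e(\mathcal I)^{1/d}+e(\mathcal J)^{1/d})^d$.
\end{enumerate}

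\emph{Forward direction.} Let $P_1,\ldots,P_r$ be the minimal primes of $\hat R$ with $\dim\hat R/P_i=d$, and set $a_i=\ell(\hat R_{P_i})$. Write $x_i=e(\mathcal I(\hat R/P_i))^{1/d}$, $y_i=e(\mathcal J(\hat R/P_i))^{1/d}$ and $z_i=e(\mathcal I\mathcal J(\hat R/P_i))^{1/d}$. As in the proof of Theorem \ref{Theorem312}, associativity gives $e(\mathcal I)=\sum a_ix_i^d$, and similarly for $\mathcal J$ and $\mathcal I\mathcal J$. Theorem \ref{MinkIneq} on each $\hat R/P_i$ gives $z_i\le x_i+y_i$. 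The Minkowski inequality for the weighted $\ell^d$-norm gives
$$
\Bigl(\sum a_i(x_i+y_i)^d\Bigr)^{1/d}\le\Bigl(\sum a_ix_i^d\Bigr)^{1/d}+\Bigl(\sum a_iy_i^d\Bigr)^{1/d}.
$$
Equality for $R$ therefore forces two things:
\begin{enumerate}
\item[(i)] $z_i=x_i+y_i$ for every $i$;
\item[(ii)] if $e(\mathcal I),e(\mathcal J)>0$, the vectors $(x_i)$ and $(y_i)$ are proportional, i.e. $e(\mathcal I)^{1/d}y_i=e(\mathcal J)^{1/d}x_i$ for all $i$. This uses the equality case of Minkowski's inequality, valid since $d\ge 2$ and the norm is strictly convex.
\end{enumerate}

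Now fix $i$ and $v\in V(\hat R/P_i)$; by Lemma \ref{LemVal} these are exactly the elements of $V(R)$.
\begin{enumerate}
\item[(1)] If $x_i,y_i>0$, then $\hat R/P_i$ is a complete, hence analytically irreducible, local domain of dimension $d\ge2$ satisfying the Minkowski equality by (i). Theorem \ref{BrunnMink} gives $y_iv(\mathcal I)=x_iv(\mathcal J)$. Multiplying by the common ratio from (ii) yields $e(\mathcal J)^{1/d}v(\mathcal I)=e(\mathcal I)^{1/d}v(\mathcal J)$.
\item[(2)] If $x_i=0$, then $v(\mathcal I)=0$ by Lemma \ref{LemmaLB} applied on $\hat R/P_i$. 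If moreover $e(\mathcal I)>0$ and $e(\mathcal J)>0$, proportionality gives $y_i=0$, hence $v(\mathcal J)=0$ and both sides vanish.
\item[(3)] If $e(\mathcal I)=0$, then every $v(\mathcal I)=0$ and both sides vanish. The cases $y_i=0$ and $e(\mathcal J)=0$ are symmetric.
\end{enumerate}

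\emph{Main obstacle.} The delicate step is the bookkeeping in the forward direction: extracting both componentwise equality (i) and proportionality (ii) from a single global equality, and handling components where one family has multiplicity zero, so that Theorem \ref{BrunnMink}, which needs positive multiplicities, is only invoked where it applies. The remaining steps are direct applications of earlier results.
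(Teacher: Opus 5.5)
Your proof is correct. The converse direction is essentially the paper's: you twist $\mathcal I$ instead of $\mathcal J$, and in the degenerate case you use Theorem \ref{Theorem313} twice where the paper uses Theorem \ref{Theorem14}. The forward direction, however, reaches the componentwise situation by a different mechanism.

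The paper works with the mixed multiplicities $e_j$ and $e(i)_j$ of the families, via Theorem \ref{Minkcoef*} and associativity. It proceeds in three steps:
\begin{enumerate}
\item[1)] It invokes Remark \ref{RemMink}, which is justified only by reference to Section 9 of \cite{C8}, to turn the global Minkowski equality into $e_j^2=e_{j-1}e_{j+1}$.
\item[2)] It uses the sum-of-squares estimate (\ref{eq41}) to force $e(i)_j^2=e(i)_{j-1}e(i)_{j+1}$, with the common ratio $\xi$, on every component.
\item[3)] It applies Remark \ref{RemMink} again to recover the Minkowski equality on each $\hat R/P_i$, and only then calls Theorem \ref{BrunnMink}.
\end{enumerate}

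You stay with top-degree multiplicities. The chain $(\sum a_iz_i^d)^{1/d}\le(\sum a_i(x_i+y_i)^d)^{1/d}\le(\sum a_ix_i^d)^{1/d}+(\sum a_iy_i^d)^{1/d}$ gives you two things at once: the componentwise Minkowski equality, and, by strict convexity of the weighted $\ell^d$-norm, the proportionality $e(\mathcal I)^{1/d}y_i=e(\mathcal J)^{1/d}x_i$.

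Your route is more elementary and self-contained. It needs only inequality 4) of Theorem \ref{MinkIneq} on each component, with no mixed multiplicities of families and no appeal to Remark \ref{RemMink}. It also shows transparently where $d\ge 2$ enters the reduction: for $d=1$ the $\ell^1$-norm is not strictly convex, which is consistent with Example \ref{ExOned}.

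The paper's route additionally records the equalities $e_{j+1}/e_j=\xi$ among mixed multiplicities, globally and on each nondegenerate component. It reuses the global version in the proof of Theorem \ref{TRSK}. Both arguments ultimately rest on Theorem \ref{BrunnMink} applied to the complete domains $\hat R/P_i$.
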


\begin{proof} First suppose that $e(\mathcal J)^{\frac{1}{d}}v(\mathcal I)=e(\mathcal I)^{\frac{1}{d}}v(\mathcal J)$ for all $v\in V(R)$. If $e(\mathcal I)=0$ then $v(\mathcal I)=0$ for all $v\in V(R)$ by Lemma \ref{LemmaLB} so $v(\mathcal I\mathcal J)=v(\mathcal J)$ for all $v\in V(R)$ by Lemma \ref{LemmaLB} so $v(\mathcal I\mathcal J)=v(\mathcal J)$ for all $v\in V(R)$. Thus $\widetilde{\mathcal I\mathcal J)}=\tilde{\mathcal J}$ and so $e(\mathcal I\mathcal J)=e(\mathcal J)$ by Theorem \ref{Theorem14}. Thus $e(\mathcal I\mathcal J)^{\frac{1}{d}}=e(\mathcal I)^{\frac{1}{d}}+e(\mathcal J)^{\frac{1}{d}}$. Therefore we may assume that $e(\mathcal I)>0$ and $e(\mathcal J)>0$. thus $\tilde{\mathcal I}=\widetilde{J^{(c)}}$ where $c=\frac{e(\mathcal I)^{\frac{1}{d}}}{e(\mathcal J)}^{\frac{1}{d}}$.
Then 
$$
e(\mathcal I)=e(\tilde{\mathcal I})=e(\widetilde{\mathcal J^{(c)}})=e(\mathcal J^{(c)})=c^de(\mathcal J)
$$
by Theorem \ref{Theorem14} and (\ref{eq50}). For all $v\in V(R)$, we have that
$$
v(\mathcal I)=v(\mathcal J^{(c)})=cv(\mathcal J)
$$ 
by (\ref{eq51}).
Thus
$$
v(\{I_nJ_n\})=(c+1)v(\mathcal J)=v(\mathcal J^{(c+1)})\mbox{ for all }v\in V(R),
$$
so
$$
e(\widetilde{\{I_nJ_n\}}=e(\mathcal J^{(c+1)})=(c+1)^de(\mathcal J)
$$
by Theorem \ref{Theorem313}. It follows that
$$
e(\{I_nJ_n\})^{\frac{1}{d}}= e(\mathcal I)^{\frac{1}{d}}+e(\mathcal J)^{\frac{1}{d}}.
$$

Now suppose that $e(\mathcal J)^{\frac{1}{d}}v(\mathcal I)=e(\mathcal I)^{\frac{1}{d}}v(\mathcal J)$ for all $v\in V(R)$.
If $e(\mathcal I)=0$ (or $e(\mathcal J)=0$) then $v(\mathcal I)=0$ (or $v(\mathcal J)=0$) for all $v\in V(R)$ be Lemma \ref{LemmaLB}, so the conclusions of the theorem hold. Thus we may assume that $e(\mathcal I)>0$ and $e(\mathcal J)>0$.

Let $e_j=e(\mathcal I^{[d-j]},\mathcal J^{[j]})$ for $0\le i\le d$.
Let $P_i$, for $1\le i\le r$  be the minimal primes of $\hat R$ such that $\dim \hat R/P_i=d$ and let $e_j(i)=e((\mathcal I\hat R/P_i)^{[d-j]},(\mathcal J \hat R/P_i)^{[j]})$.
By Theorem \ref{Minkcoef*}, for $n_1,n_2\in \ZZ_{\ge 0}$, we have polynomial functions 
$$
P(n_1,n_2)=\lim_{m\rightarrow\infty}\frac{e(I_{mn_1}J_{mn_2})}{m^d}=\sum_{j=0}^d\frac{1}{(d-j)!j!}e_jn_1^{d-j}n_2^j,
$$
$$
P_i(n_1,n_2)=\lim_{m\rightarrow\infty}\frac{e(I_{mn_1}J_{mn_2}\hat R/P_i)}{m^d}=\sum_{j=0}^d\frac{1}{(d-j)!j!}e_j(i)n_1^{d-j}n_2^j.
$$
By Corollary \ref{Cor73}, $P(n_1,n_2)=\sum a_iP_i(n_1,n_2)$ where $a_i=\ell_{\hat R/P_i}(\hat R/P_i)$.

Since $e(\mathcal I)>0$, $e(\mathcal J)>0$ and the Minkowski equality $e(\mathcal I\mathcal J)^{\frac{1}{d}}=e(\mathcal I)^{\frac{1}{d}}+e(\mathcal J)^{\frac{1}{d}}$  is satisfied, by Remark \ref{RemMink},
 
\begin{equation}\label{eq64}
e_i^d=e_0^{d-i}e_d^i\mbox{ for }0\le i\le d
\end{equation}
and
\begin{equation}\label{eq65}
e_i^2=e_{i-1}e_{i+1}\mbox{ for all }i.
\end{equation}
Equation (\ref{eq64}) implies that 
$e_i>0$ for all $i$. Thus there exists $\xi\in \RR_{>0}$ such that
\begin{equation}\label{eq79}
\xi=\frac{e_1}{e_0}=\frac{e_2}{e_1}=\cdots=\frac{e_d}{e_{d-1}}.
\end{equation}
For fixed $j$ with $1<j<d$, we have that
\begin{equation}\label{eq41}
\begin{array}{lll}
0&\le &\sum_{i=1}^ra_i(e(i)_{j+1}^{\frac{1}{2}}-\xi e(i)_{j-1}^{\frac{1}{2}})^2\\
&=&\sum_{i=1}^ra_i(e(i)_{j+1}-2\xi e(i)_{j+1}^{\frac{1}{2}} e(i)_{j-1}^{\frac{1}{2}}+\xi^2 e(i)_{j-1})\\
&\le& \sum_{i=1}^ra_i(e(i)_{j+1}-2\xi e(i)_j+\xi^2 e(i)_{j-1})\\
&=& e_{j+1}-2\xi e_j+\xi^2e_{j-1}=0
\end{array}
\end{equation}
where the second inequality is by the first Minkowski inequality and the final equality with 0 is by (\ref{eq79}).

Thus
\begin{equation}\label{eq40}
e(i)_{j+1}^{\frac{1}{2}}=\xi e(i)_{j-1}^{\frac{1}{2}}\mbox{ for all $i$ and $j$}
\end{equation}
and
\begin{equation}\label{eq65'}
e(i)_j^2=e(i)_{j-1}e(i)_{j+1}
\end{equation}
for all $i$ and $j$ since the first and second inequalities in (\ref{eq41}) are equalities.

Thus for fixed $i$, either $e(i)_j=0$ for all $j$ or $e(i)_j>0$ for all $j$. 

We first consider the case where $i$ satisfies
\begin{equation}\label{eq81}
e(i)_j>0\mbox{ for all $j$.}
\end{equation}
Then equation (\ref{eq65'}) implies that there exists $\lambda_i\in \RR_{>0}$ such that
$$
\frac{e(i)_{j+1}}{e(i)_j}=\lambda_i\mbox{ for all $j$.}
$$
 Equation (\ref{eq40}) implies
$$
\xi^2=\frac{e(i)_{j+1}}{e(i)_{j-1}}=\frac{e(i)_{j+1}}{e(i)_j}\frac{e(i)_j}{e(i)_{j-1}}=\lambda_i^2
$$
for all $i$ and $j$, so that $\lambda_i=\xi$ and thus 
\begin{equation}\label{eq42}
\frac{e(i)_d^{\frac{1}{d}}}{e(i)_0^{\frac{1}{d}}}=\xi,
\end{equation}
since
$$
\frac{e(i)_d}{e(i)_0}=\frac{e(i)_d}{e(i)_{d-1}}\frac{e(i)_{d-1}}{e(i)_{d-2}}\cdots \frac{e(i)_1}{e(i)_0}=\xi^d.
$$

Since (\ref{eq65'}) holds and  $e(i)_0>0$ and $e(i)_d>0$, Remark \ref{RemMink}
 shows that 
the Minkowski equality 
$$
e(\mathcal I\mathcal J\hat R/P_i)^{\frac{1}{d}}=e(\mathcal I\hat R/P_i)^{\frac{1}{d}}+e(\mathcal J\hat R/P_i)^{\frac{1}{d}}
$$
holds   so we have that 
$$
v(\mathcal J(\hat R/P_i))=\frac{e(\mathcal J(\hat R/P_i))^{\frac{1}{d}}}{e(\mathcal I(\hat R/P_i)^{\frac{1}{d}}} v(\mathcal I(\hat R/P_i))
=\frac{e(i)_d^{\frac{1}{d}}}{e(i)_0^{\frac{1}{d}}}v(\mathcal I(\hat R/P_i))
$$
for all $v \in V(\hat R/P_i)$ by Theorem \ref{BrunnMink}. By (\ref{eq42}), 
\begin{equation}\label{eq43}
v(\mathcal J(\hat R/P_i))=\xi v(\mathcal I(\hat R/P_i))
\end{equation}
for all $v\in V(\hat R/P_i)$. Thus
$$
\frac{e(\mathcal J)^{\frac{1}{d}}}{e(\mathcal I)^{\frac{1}{d}}}v(\mathcal I)= \xi v(\mathcal I\hat R/P_i)=v(\mathcal J\hat R/P_i)=v(\mathcal J)
$$
by (\ref{eq79}) and (\ref{eq43}).

We now suppose that $i$ is such that the remaining case holds,
\begin{equation}\label{eq80*}
e(i)_j=0\mbox{ for all }j.
\end{equation}
$e(i)_0=e(i)_d=0$ implies $v(\mathcal I(\hat R/P_i))=0$ and $v(\mathcal J\hat R/P_i)=0$ for all $v\in V(\hat R/P_i)$ by Lemma \ref{LemmaLB}. Thus $ v(\mathcal J)=v(\mathcal I)=0$  so that 
$$
e(\mathcal J)^{\frac{1}{d}}v(\mathcal I)=e(\mathcal I)^{\frac{1}{d}}v(\mathcal J).
$$

\end{proof}

We deduce the Teissier-Rees-Sharp-Katz Theorem, which was discussed in Subsection \ref{SubsecMink} of the introduction, from Theorem \ref{Mink}. 

\begin{Theorem}\label{TRSK}(Teissier, Rees and  Sharp, Katz) Suppose that $R$ is a formally equidimensional Noetherian local ring of dimension $d\ge 2$ and $I,J$ are $m_R$-primary ideals. If 
$$
e(IJ)^{\frac{1}{d}}=e(I)^{\frac{1}{d}}+e(J)^{\frac{1}{d}},
$$
then there exist positive integers $a$ and $b$ such that
$$
\overline{I^a}=\overline{J^b}.
$$
\end{Theorem}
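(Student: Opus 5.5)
We apply Theorem \ref{Mink} to the adic filtrations $\mathcal I=\{I^n\}$ and $\mathcal J=\{J^n\}$. For these filtrations $e(\mathcal I)=e(I)$, $e(\mathcal J)=e(J)$, $e(\mathcal I\mathcal J)=e(IJ)$, and $v(\mathcal I)=v(I)$, $v(\mathcal J)=v(J)$ for every $v\in V(R)$, since $v(I^n)=nv(I)$. The hypothesis is then the Minkowski equality for $\mathcal I,\mathcal J$. Since $d\ge 2$, Theorem \ref{Mink} gives
$$
e(J)^{\frac{1}{d}}v(I)=e(I)^{\frac{1}{d}}v(J)\mbox{ for all }v\in V(R).
$$
Because $I,J$ are $m_R$-primary, $e(I)>0$ and $e(J)>0$ (this holds for any $d$-dimensional ring, by the dimension of $R$). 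Hence $v(J)=\xi v(I)$ for all $v\in V(R)$, where $\xi=(e(J)/e(I))^{\frac{1}{d}}>0$.

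The key step is to show that $\xi$ is rational. Pick any $v\in V(R)$; such $v$ exist, for instance the Rees valuations of $m_R$, which lie in $V(R)$ since $R$ is formally equidimensional. Since $v$ is $\ZZ$-valued and $v(I),v(J)$ are positive integers (as $v(m_R)>0$), we get $\xi=v(J)/v(I)\in\QQ_{>0}$. Write $\xi=a/b$ with $a,b\in\ZZ_{>0}$. Then for all $v\in V(R)$,
$$
v(J^b)=bv(J)=av(I)=v(I^a).
$$

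Finally we pass from equality of values to equality of integral closures. Since $R$ is formally equidimensional, the Rees valuations of $I^a$ and of $J^b$ all lie in $V(R)$ (Proposition 10.2.6 \cite{HS}, as recalled in Subsection \ref{SubSecVal}). The Rees valuation description gives $\overline{I^a}=\{f\mid w(f)\ge w(I^a)\mbox{ for Rees valuations }w\mbox{ of }I^a\}$, and similarly for $\overline{J^b}$. Take $f\in\overline{J^b}$ and a Rees valuation $w$ of $I^a$. Since $w\in V(R)$ and integral closure is detected by valuations, $w(f)\ge w(J^b)=w(I^a)$. Hence $f\in\overline{I^a}$, and by symmetry $\overline{I^a}=\overline{J^b}$.

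An alternative for this last step is to apply Theorem \ref{Theorem14} to $I^aJ^b\subset I^a$, or to note that $\tilde{\mathcal I}=\overline{\mathcal I}$ for adic filtrations on formally equidimensional rings. The main point needing care is the rationality of $\xi$, which uses that $V(R)$ is nonempty and that its valuations are discrete. The remaining steps are direct applications of Theorem \ref{Mink} and Rees valuation theory.
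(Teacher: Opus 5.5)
Your proof is correct. Its overall shape matches the paper's: you apply Theorem \ref{Mink} to the adic filtrations, obtain $v(J)$ proportional to $v(I)$ on $V(R)$, and conclude via Rees valuations, which lie in $V(R)$ by formal equidimensionality.

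The real difference is how you show that $\xi=(e(J)/e(I))^{\frac{1}{d}}$ is rational.

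\begin{itemize}
\item The paper reaches back into the proof of Theorem \ref{Mink}. Equality in the Minkowski inequality forces, via Remark \ref{RemMink}, all $e_i=e(I^{[d-i]},J^{[i]})$ to be positive and in geometric progression. So $\xi=e_1/e_0$ is a ratio of integer mixed multiplicities, and the paper concludes $\overline{I^{e_1}}=\overline{J^{e_0}}$.
\item You instead evaluate $v(J)=\xi v(I)$ at a single $v\in V(R)$. There $v(I)$ and $v(J)$ are positive integers. They are finite because an $m_R$-primary ideal is not contained in the minimal prime on which $v$ is infinite, a detail worth stating.
\end{itemize}

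Your route uses only the statement of Theorem \ref{Mink}, that $V(R)$ is nonempty, and that its valuations are $\ZZ$-valued. It is therefore shorter and avoids the analysis of equality in all the Minkowski inequalities. The paper's route gives canonical exponents $a=e_1$, $b=e_0$ expressed through mixed multiplicities; your reduced fraction $a/b$ is of course the same ratio.

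One small correction to your closing aside. Applying Theorem \ref{Theorem14} to $I^aJ^b\subset I^a$ does not work, since $v(I^aJ^b)=2v(I^a)\neq v(I^a)$. A correct inclusion would be $I^a\subset I^a+J^b$, whose valuations agree with those of $I^a$. In any case Theorem \ref{Theorem14} is unnecessary there, because you already have equality of valuations on $V(R)$. Your main argument does not depend on this aside.
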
 

\begin{proof} Let $e_i=e(I^{[d-i]},J^{[i]})$ for $0\le i\le d$. As shown in the proof of Theorem \ref{Mink}, all $e_i$ are positive, since $e_0$ and $e_d$ are, and by (\ref{eq41}) of the proof of Theorem \ref{Mink}, 
$$
\frac{e_1}{e_0}=\frac{e_2}{e_1}=\cdots \frac{e_d}{e_{d-1}},
$$
and so
$$
\left(\frac{e(J)}{e(I)}\right)^{\frac{1}{d}}=\frac{e_1}{e_0}.
$$
Thus
$$
e_1v(I)=e_0v(J)
$$
for all $v\in V(R)$, by Theorem \ref{Mink}, applied to the adic filtrations $\mathcal I=\{I^n\}$ and $\mathcal J=\{J^n\}$.
Since $R$ is formally equidimensional, all Rees valuations of $I$ and $J$ are in $V(R)$. Thus 
$$
\overline{I^{e_1}}=\overline{J^{e_0}}.
$$
(With the formally equidimensional assumption, we have that the saturations $\tilde{\mathcal I}=\{\overline{I^n}\}$ and $\tilde{\mathcal J}=\{\overline{J^n}\}$, as explained in Subsection \ref{SubSecVal} of the introduction.)
\end{proof}

The following corollary is essentially a rephrasing of Theorem \ref{Mink}. It is deduced in Corollary \cite{BLQ} when $R$ is analytically irreducible, using Theorem 10.3 \cite{C8} (Theorem \ref{BrunnMink} of the introduction).

\begin{Corollary}\label{Minkgen}(Corollary \ref{Minkgen*} of the Introduction) Suppose that $R$ is a $d$-dimensional Noetherian local ring with $d\ge 2$ and $\mathcal I,\mathcal J$ are graded $m_R$-families such that 
$e(\mathcal J)> 0$. 
Then there is equality in the Minkowski inequality
$$
e(\mathcal I\mathcal J)^{\frac{1}{d}}= e(\mathcal I)^{\frac{1}{d}}+e(\mathcal J)^{\frac{1}{d}}
$$
if and only if 
$$
\tilde{\mathcal I}=\widetilde{\mathcal J^{(c)}}\mbox{ for some $c\in \RR_{\ge 0}$}.
$$
\end{Corollary}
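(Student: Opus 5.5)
The plan is to deduce the corollary directly from Theorem \ref{Mink}, combined with the saturation formalism of Section \ref{Vals}: the identities (\ref{eq51}), (\ref{eq50}), (\ref{eq88}) and Theorem \ref{Theorem14}. The bridge between the two conditions is that, by Definition \ref{SatDef}, $\tilde{\mathcal I}$ depends only on the function $v\mapsto v(\mathcal I)$ on $V(R)$. Conversely, Lemma \ref{SatInt} and the definition give $v(\tilde{\mathcal I})=v(\mathcal I)$ for every $v\in V(R)$. Indeed, $\mathcal I\subset\tilde{\mathcal I}$ gives $v(\tilde{\mathcal I})\le v(\mathcal I)$, while every $f\in\tilde I_n$ satisfies $v(f)\ge nv(\mathcal I)$, giving the reverse inequality. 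Hence for two graded $m_R$-families, $\tilde{\mathcal I}=\tilde{\mathcal K}$ if and only if $v(\mathcal I)=v(\mathcal K)$ for all $v\in V(R)$.

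For the forward direction, assume Minkowski equality and $e(\mathcal J)>0$. Set $c=e(\mathcal I)^{1/d}/e(\mathcal J)^{1/d}\in\RR_{\ge 0}$. Theorem \ref{Mink} gives $v(\mathcal I)=c\,v(\mathcal J)$ for all $v\in V(R)$. By (\ref{eq51}) this equals $v(\mathcal J^{(c)})$ when $c>0$, so $\tilde{\mathcal I}=\widetilde{\mathcal J^{(c)}}$ by the remark above.

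The case $c=0$, i.e.\ $e(\mathcal I)=0$, is treated separately. Lemma \ref{LemmaLB} gives $v(\mathcal I)=0$ for all $v$. The family $\mathcal J^{(0)}$ is by definition the $m_R$-adic-type filtration with all terms $m_R$, so $v(\mathcal J^{(0)})=\inf_n v(m_R)/n=0$. Hence both saturations are the filtration $\{m_R\}$ (compare the first Remark after Lemma \ref{SatInt}), and the identity holds.

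For the converse, suppose $\tilde{\mathcal I}=\widetilde{\mathcal J^{(c)}}$. Then $v(\mathcal I)=v(\mathcal J^{(c)})=c\,v(\mathcal J)$ for all $v$, using (\ref{eq51}) for $c>0$ and the computation above for $c=0$. It remains to identify $c$ with $e(\mathcal I)^{1/d}/e(\mathcal J)^{1/d}$. Apply Theorem \ref{Theorem313} in both directions to $\mathcal I$ and $\mathcal J^{(c)}$, which have equal valuation values. This gives $e(\mathcal I)=e(\mathcal J^{(c)})$, and (\ref{eq50}) turns this into $c^de(\mathcal J)$; for $c=0$ we instead get $e(\mathcal I)=0$ from Theorem \ref{Theorem313} compared with $\mathcal J^{(0)}$, since $e(\{m_R\})=0$. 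Thus $c=e(\mathcal I)^{1/d}/e(\mathcal J)^{1/d}$, because $e(\mathcal J)>0$. Consequently
$$e(\mathcal J)^{\frac1d}v(\mathcal I)=e(\mathcal I)^{\frac1d}v(\mathcal J)$$
for all $v\in V(R)$, and Theorem \ref{Mink} yields Minkowski equality.

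The only delicate step is the degenerate twist $c=0$. Formula (\ref{eq51}) is stated for $c>0$, so the value $v(\mathcal J^{(0)})=0$ and the vanishing $e(\mathcal J^{(0)})=0$ must be checked directly from the definition. For the latter, note $e(m_R)/n^d\to 0$. Everything else is a formal consequence of the earlier results.
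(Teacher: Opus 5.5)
Your proposal is correct and follows the paper's proof. The forward direction is identical: Theorem \ref{Mink} followed by (\ref{eq51}). For the converse, where the paper simply writes ``by Theorem \ref{Mink}'', you supply the needed identification $e(\mathcal I)=c^de(\mathcal J)$ via Theorem \ref{Theorem313} and (\ref{eq50}), in the same way as the first half of the paper's proof of Theorem \ref{Mink}, and you also treat $c=0$ explicitly.

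One technicality you share with the paper: if $\mathcal J$ is not a filtration, then $\mathcal J^{(c)}$ need not be a graded family, so Theorem \ref{Theorem313} does not literally apply to it. To stay within its hypotheses, compare instead the integer twists $\{I_{qn}\}$ and $\{J_{pn}\}$ for rationals $p/q$ below and above $c$. This gives $(p/q)^de(\mathcal J)\le e(\mathcal I)$ and $e(\mathcal I)\le (p/q)^de(\mathcal J)$ respectively, and letting $p/q\to c$ yields $e(\mathcal I)=c^de(\mathcal J)$.
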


\begin{proof}  
If $\tilde{\mathcal I}=\widetilde{\mathcal J^{(c)}}$ then $e(\mathcal I\mathcal J)^{\frac{1}{d}}=e(\mathcal I)^{\frac{1}{d}}+e(\mathcal J)^{\frac{1}{d}}$ by Theorem \ref{Mink}.


We now prove that equality in the Minkowski inequality implies $\tilde{\mathcal I}=\tilde{\mathcal J^{(c)}}$ for some $c\in \RR_{\ge 0}$. 
Let $c=\frac{e(\mathcal I)^{\frac{1}{d}}}{e(\mathcal J)^{\frac{1}{d}}}$. By Theorem \ref{Mink}, we have that 
$$
v(\mathcal I)=c v(\mathcal J) \mbox{ for all $v\in V(R)$}.
$$
Thus, for $v\in V(R)$, we have that
$v(\mathcal I)=c v(\mathcal J)=v(\mathcal J^{(c)})$ by (\ref{eq51}). Thus $\tilde{\mathcal I}=\widetilde{\mathcal J^{(c)}}$.
\end{proof}

\begin{Corollary}\label{BMink} Suppose that $R$ is a $d$-dimensional Noetherian local ring with $d\ge 2$, $\mathcal I$ and $\mathcal J$ are divisorial filtrations.
Then there is equality in the Minkowski inequality,
$$
e(\{I_nJ_n\})^{\frac{1}{d}}= e(\mathcal I)^{\frac{1}{d}}+e(\mathcal J)^{\frac{1}{d}}
$$
if and only if 
$$
\mathcal I=\mathcal J^{(c)}\mbox{ for some $c\in \RR_{>0}$}
$$
\end{Corollary}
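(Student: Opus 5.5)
The plan is to combine Corollary \ref{Minkgen} with Lemma \ref{DivSat}. Since $\mathcal I$ and $\mathcal J$ are divisorial filtrations, Lemma \ref{DivSat} gives $\tilde{\mathcal I}=\mathcal I$ and $\tilde{\mathcal J}=\mathcal J$. To apply the corollary we need $e(\mathcal J)>0$. A divisorial filtration is defined by some $v_i\in V(R)$ and $a_i\ge 0$. If some $a_i>0$, then $\mathcal J\subset \mathcal I(v_i)^{(a_i)}$, so (\ref{eq80}), (\ref{eq50}) and Lemma \ref{Lemma21} give $e(\mathcal J)\ge a_i^de(\mathcal I(v_i))>0$. (If all $a_i=0$, the filtration is degenerate, $J_n=R$ for all $n$; I would interpret "divisorial" as excluding this, or treat it separately, since the statement asks for $c>0$.)

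For the forward direction, assume the Minkowski equality holds. Corollary \ref{Minkgen} then gives $\tilde{\mathcal I}=\widetilde{\mathcal J^{(c)}}$ with $c=e(\mathcal I)^{1/d}/e(\mathcal J)^{1/d}$. By (\ref{eq88}), $\widetilde{\mathcal J^{(c)}}=(\tilde{\mathcal J})^{(c)}=\mathcal J^{(c)}$, so $\mathcal I=\mathcal J^{(c)}$. The same positivity argument applied to $\mathcal I$ gives $e(\mathcal I)>0$, hence $c>0$.

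For the converse, suppose $\mathcal I=\mathcal J^{(c)}$. Then $\tilde{\mathcal I}=\widetilde{\mathcal J^{(c)}}$ trivially, and Corollary \ref{Minkgen} gives the equality.

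The main obstacle is the twist. Here $\mathcal J^{(c)}$ has $n$-th term $J_{\lceil cn\rceil}$, while $(\tilde{\mathcal J})^{(c)}$ in (\ref{eq88}) should be read as the filtration $\{f : v(f)\ge cn\,v(\mathcal J)\}$. These two need not coincide with each other termwise. I would handle this by reading $\mathcal J^{(c)}$ for a divisorial $\mathcal J$ as the divisorial filtration with coefficients $ca_i$, which is still divisorial and hence saturated. Alternatively, I would check directly that $\mathcal I$ and the divisorial filtration $\{f: v_i(f)\ge n c a_i\}$ are both saturated and have the same values $v(\cdot)$ for all $v\in V(R)$. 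Equal saturations then give equality, because both filtrations are their own saturations.
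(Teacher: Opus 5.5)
Your route is the paper's: Corollary~\ref{Minkgen} combined with Lemma~\ref{DivSat}, with $e(\mathcal I),e(\mathcal J)>0$ obtained from Lemma~\ref{Lemma21}. This is exactly the argument inside Lemma~\ref{LemmaLB}, which the paper cites.

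Your worry about the twist is well founded, and the defect is in the paper's argument, not in your reading. The paper's proof asserts that $\mathcal J^{(c)}=\{J_{\lceil cn\rceil}\}$ is again divisorial, and (\ref{eq88}) taken literally says the same thing. This is false. In $R=k[[x,y]]$ take $\mathcal J=\{m_R^{3n}\}$ and $\mathcal I=\{f\mid \mbox{ord}(f)\ge \frac{3}{2}n\}=\{m_R^{\lceil 3n/2\rceil}\}$. Both are divisorial, and
$e(\mathcal I\mathcal J)^{1/2}=\frac{9}{2}=\frac{3}{2}+3=e(\mathcal I)^{1/2}+e(\mathcal J)^{1/2}$.
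However, $I_1=m_R^2$ never equals $J_{\lceil c\rceil}=m_R^{3\lceil c\rceil}$. Equivalently, $\mathcal J^{(1/2)}=\{m_R^{3\lceil n/2\rceil}\}$ is not saturated; its saturation is $\mathcal I$. So with the twist of (\ref{eqIn3}) the corollary fails as literally stated. It holds only if $\mathcal J^{(c)}$ is read as the divisorial filtration $\mathcal K=\{K_n\}$, $K_n=\{f\mid v_i(f)\ge nca_i \mbox{ for all } i\}$, which is precisely your repair.

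The one step you still owe is the one you leave as \emph{check directly}: $v(\mathcal K)=c\,v(\mathcal J)$ for every $v\in V(R)$, not just for the $v_i$ defining $\mathcal J$. It follows from the inclusions $J_{\lceil cn\rceil}\subset K_n\subset J_{\lfloor cn\rfloor}$, valid since $a_i\ge 0$. These give $v(J_{\lfloor cn\rfloor})\le v(K_n)\le v(J_{\lceil cn\rceil})$. Combine this with $v(\mathcal J)=\lim_m v(J_m)/m$ and the analogous formula for $\mathcal K$; both hold because $m\mapsto v(J_m)$ and $m\mapsto v(K_m)$ are subadditive. Dividing by $n$ and letting $n\to\infty$ gives the claim. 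Hence $\tilde{\mathcal K}=\widetilde{\mathcal J^{(c)}}$, and by Lemma~\ref{DivSat} $\widetilde{\mathcal J^{(c)}}=\mathcal K$. The forward direction then reads $\mathcal I=\tilde{\mathcal I}=\widetilde{\mathcal J^{(c)}}=\mathcal K$.

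Once you adopt this reading, the converse is no longer trivial as you wrote it. From $\mathcal I=\mathcal K$ you need the same fact to get $\tilde{\mathcal I}=\tilde{\mathcal K}=\widetilde{\mathcal J^{(c)}}$ before Corollary~\ref{Minkgen} applies.
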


\begin{proof} Since $\mathcal I$ is divisorial, $\mathcal I=\tilde {\mathcal I}$ by Lemma \ref{DivSat}. Similarly, since $\mathcal J$ is divisorial,  
$\mathcal J^{(c)}$ is divisorial and 
$\mathcal J^{(c)}=\widetilde{\mathcal J^{(c)}}$. We have that $e(\mathcal I)>0$ and $e(\mathcal J)>0$ by Lemma \ref{LemmaLB}. 
\end{proof}

\section{The Minkowski equality for bounded filtrations}\label{SecMinkbound} 
In this section we prove Theorem \ref{BoundMink}. We continue to assume that $R$ is a Noetherian local ring. 

Suppose that $\mathcal I=\{I_n\}$ is a divisorial $m_R$-filtration, so that there exist $v_1,\ldots,v_r\in V(R)$ and $a_1,\ldots,a_r\in \RR_{\ge 0}$ such that 
$$
I_n=\{f\in R\mid v_i(f)\ge a_i n\mbox{ for }1\le i\le r\}.
$$
Then
$$
\tilde I_n=\{f\in R\mid v_i(f)\ge nv_i(\mathcal I)\mbox{ for }1\le i\le r\}
$$
by the proof of Lemma \ref{DivSat}. 

Now suppose that $\mathcal I=\{I_n\}$ is graded $m_R$-family. As in equation (74) of \cite{C8}, we define
$$
w_{\mathcal I}(f)=\max\{m\mid f\in I_m\}
$$
for $f\in R$. $w_{\mathcal I}(f)$ is either a natural number or $\infty$.

The following lemma is proven for less general rings in Lemma 11.3 \cite{C8}. The proof is the same.

\begin{Lemma}\label{ratfun} Suppose that $R$ is a Noetherian local ring and $\mathcal I$ is a $\QQ$-divisorial $m_R$-filtration. Suppose that $f\in m_R\setminus Q_1\cup\cdots \cup Q_r$ where $Q_1,\ldots,Q_r$ are the minimal primes of $R$ such that $v_i\in V(R/Q_i)$. Then $w_{\mathcal I}(f)<\infty$ and there exists $s\in \ZZ_{>0}$ such that 
$$
w_{\mathcal I}(f^{ns})=nw_{\mathcal I}(f^s)
$$
for all $n\in \ZZ_{>0}$.
\end{Lemma}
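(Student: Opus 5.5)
The plan is to write the $\QQ$-divisorial filtration explicitly, turn the statement into one about the finitely many numbers $v_i(f)$, and then choose $s$ to clear denominators. By hypothesis there are $v_1,\ldots,v_r\in V(R)$ and $a_1,\ldots,a_r\in\QQ_{>0}$ such that
$$
I_m=\{g\in R\mid v_i(g)\ge a_im\mbox{ for }1\le i\le r\}
$$
for all $m$. If some $a_i=0$, that condition is vacuous and I drop $v_i$; the degenerate case where all $a_i=0$ would make $I_m=R$, which is excluded since $I_m$ is $m_R$-primary for $m>0$. Here $v_i$ is a valuation on $R/Q_i$, extended by $v_i(g)=\infty$ for $g\in Q_i$.

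The first step is finiteness. Since $f\notin Q_i$, each $v_i(f)$ is a nonnegative integer. Since $f\in m_R$ and $v_i(m_R)>0$, we get $v_i(f)\ge 1$, although only finiteness is needed here. Then
$$
f\in I_m \iff m\le \frac{v_i(f)}{a_i}\mbox{ for all }i,
$$
so
$$
w_{\mathcal I}(f)=\left\lfloor \min_i \frac{v_i(f)}{a_i}\right\rfloor<\infty.
$$
If we only assume $a_i\ge 0$, at least one $a_i$ is positive, and the minimum is taken over those $i$.

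The second step is the choice of $s$. For every $n$ we have
$$
w_{\mathcal I}(f^{n})=\left\lfloor n\min_i \frac{v_i(f)}{a_i}\right\rfloor.
$$
Let $\mu=\min_i v_i(f)/a_i$, which is rational because the $a_i$ are rational and the $v_i(f)$ are integers. Take $s$ to be a positive integer with $s\mu\in\ZZ$, for instance a common denominator of the $a_i$. Then
$$
w_{\mathcal I}(f^{ns})=ns\mu=n\,(s\mu)=n\,w_{\mathcal I}(f^s),
$$
which is the claim.

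The only real point to check is the reduction to $a_i\in\QQ$. I take $\QQ$-divisorial to mean that the $a_i$ can be chosen rational, so that $\mu$ is rational. I also need $v_i(f)<\infty$, which is where the hypothesis $f\notin Q_1\cup\cdots\cup Q_r$ enters: without it the minimum could be $\infty$, or all terms could be infinite.
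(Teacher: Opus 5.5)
Your argument is correct and is essentially the proof the paper relies on (it defers to Lemma 11.3 of \cite{C8}). There, as in your write-up, $f\notin Q_1\cup\cdots\cup Q_r$ makes every $v_i(f)$ a finite integer, so $w_{\mathcal I}(f^n)=\lfloor n\mu\rfloor$ with $\mu=\min_i v_i(f)/a_i\in\QQ$, and any $s$ with $s\mu\in\ZZ$ works. One small slip: your parenthetical choice ``a common denominator of the $a_i$'' fails in general (for $a_1=2$ and $v_1(f)=1$ one gets $\mu=\frac{1}{2}$, and $s=1$ fails because $w_{\mathcal I}(f^n)=\lfloor n/2\rfloor$), so take $s$ to be the denominator of $\mu$, or a common denominator of the rationals $v_i(f)/a_i$.
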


The following theorem generalizes Theorem 11.4 of \cite{C8} which assumes that $R$ is a $d$-dimensional normal local ring. The proof in the general case is essentially the same, using the more general Theorem \ref{Mink} instead of Theorem 10.3 \cite{C8}.

\begin{Theorem}\label{ValThm} Suppose that $R$ is a $d$-dimensional Noetherian local ring with $d\ge 2$
and $\mathcal I$ and $\mathcal J$ are $\QQ$-divisorial $m_R$-filtrations. 
Suppose that the Minkowski equality 
$$
e(\mathcal I\mathcal J)^{\frac{1}{d}}=e(\mathcal I)^{\frac{1}{d}}+e(\mathcal J)^{\frac{1}{d}}
$$
holds between $\mathcal I$ and $\mathcal J$. Then
$$
\xi=\frac{e(\mathcal J)^{\frac{1}{d}}}{e(\mathcal I)^{\frac{1}{d}}}\in \QQ_{>0}.
$$
\end{Theorem}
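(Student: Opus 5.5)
The plan is to combine Theorem \ref{Mink} with Lemma \ref{ratfun}. First dispose of degenerate cases. $\mathcal I$ and $\mathcal J$ are $\QQ$-divisorial, hence divisorial, so $e(\mathcal I)>0$ and $e(\mathcal J)>0$ by Lemma \ref{Lemma21} (an $m_R$-valuation $v_i$ with $a_i>0$ occurs in the definition; otherwise $I_n=R$, which is impossible for an $m_R$-family). Hence $\xi$ is a well defined positive real. By Theorem \ref{Mink}, the Minkowski equality gives
$$
v(\mathcal J)=\xi\, v(\mathcal I)\mbox{ for all }v\in V(R).
$$
By Lemma \ref{DivSat} and its proof, $\mathcal I=\tilde{\mathcal I}$ and $\mathcal J=\tilde{\mathcal J}$. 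In particular $I_n=\{f\mid v_i(f)\ge nv_i(\mathcal I)\}$ for the defining valuations $v_1,\ldots,v_r$, where $v_i(\mathcal I)\in\QQ$, and similarly $J_n$ is defined by valuations $u_1,\ldots,u_t$ with rational $u_k(\mathcal J)$. Consequently $J_n=\{f\mid v(f)\ge n\xi v(\mathcal I)\mbox{ for all }v\in V(R)\}$, so $\mathcal J$ coincides with the saturation of $\mathcal I^{(\xi)}$, computed using only the finite set of valuations $v_i$.

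Rationality would be immediate if some $v\in V(R)$ had $v(\mathcal I)\in\QQ_{>0}$ and $v(\mathcal J)\in\QQ$, for then $\xi=v(\mathcal J)/v(\mathcal I)\in\QQ$. Take $v=v_1$. Then $v_1(\mathcal I)\in\QQ_{>0}$, since it is a defining value and $a_1\le v_1(\mathcal I)$. The issue is whether $v_1(\mathcal J)$ is rational: $v_1$ need not be among the $u_k$. To control this, use $w_{\mathcal J}$ together with Lemma \ref{ratfun}. Choose $f\in m_R$ avoiding the relevant minimal primes, for both families at once, with $v_1(f)$ realizing the minimum of $v_k(f)/v_k(\mathcal I)$ over the defining valuations of $\mathcal I$. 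Such an $f$ can be found by prime avoidance, taking a suitable element of $I_n$ for large $n$. Then
$$
w_{\mathcal I}(f^{ns})=\left\lfloor \min_k \frac{ns\, v_k(f)}{v_k(\mathcal I)}\right\rfloor,\qquad
w_{\mathcal J}(f^{ns})=\left\lfloor \frac{1}{\xi}\min_k \frac{ns\, v_k(f)}{v_k(\mathcal I)}\right\rfloor.
$$
Since $\mathcal J$ is $\QQ$-divisorial, Lemma \ref{ratfun} makes $n\mapsto w_{\mathcal J}(f^{ns})$ exactly linear for suitable $s$, with slope a positive integer $w_{\mathcal J}(f^s)$. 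Dividing by $n$ and letting $n\to\infty$ gives
$$
w_{\mathcal J}(f^s)=\frac{s}{\xi}\min_k\frac{v_k(f)}{v_k(\mathcal I)}.
$$
The right-hand minimum is rational, so $\xi\in\QQ$.

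The main obstacle is justifying the formula for $w_{\mathcal J}$: the identity $J_n=\{g\mid v_k(g)\ge n\xi v_k(\mathcal I)\ \forall k\}$ must hold using only the $v_k$ that define $\mathcal I$. This follows from $\mathcal J=\tilde{\mathcal J}$, which gives $\mathcal J=\widetilde{\mathcal I^{(\xi)}}$ by (\ref{eq88}). One must still check that $\widetilde{\mathcal I^{(\xi)}}$ is cut out by the $v_k$ alone. That holds because $\tilde{\mathcal I}=\mathcal I$ is cut out by them, and scaling by $\xi$ preserves this description up to the real twist. I would verify this step carefully, possibly replacing the floor computation by a comparison of limits $\lim_n w(f^n)/n$, which avoids rounding issues.
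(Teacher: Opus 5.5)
Your strategy is the paper's. Theorem \ref{Mink} gives $v(\mathcal J)=\xi v(\mathcal I)$, and the proof of Lemma \ref{DivSat} describes both filtrations by finitely many valuations. Lemma \ref{ratfun} then makes $w_{\mathcal I}$ and $w_{\mathcal J}$ exactly linear on powers of a suitable $f$, so $\xi$ becomes a ratio of positive integers.

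There is a gap in your final step, ``the right-hand minimum is rational, so $\xi\in\QQ$''. It rests on your claim that $v_k(\mathcal I)\in\QQ$ for the defining valuations, and that claim is not justified. From $a_k\le v_k(\mathcal I)$ with $a_k\in\QQ$ nothing follows about $v_k(\mathcal I)=\lim_n v_k(I_n)/n$. This is an asymptotic order of vanishing and need not be rational. Note also that the definition allows $a_k=0$, so any $v\in V(R)$ may appear in the defining list. For the same reason, ``$v_1(\mathcal I)\in\QQ_{>0}$ since it is a defining value'' is unsupported. Your requirement that $v_1$ realize the minimum is neither justified nor used, so drop it.

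The fix uses something you already arranged: you chose $f$ so that Lemma \ref{ratfun} applies to both families, so apply it to $\mathcal I$ as well. Put $\mu=\min_k v_k(f)/v_k(\mathcal I)$. Your own floor formula gives $\lfloor ns\mu\rfloor=w_{\mathcal I}(f^{ns})=n\,w_{\mathcal I}(f^s)$ for all $n$, hence $w_{\mathcal I}(f^s)=s\mu\in\ZZ_{>0}$. Together with $w_{\mathcal J}(f^s)=s\mu/\xi$ this gives $\xi=w_{\mathcal I}(f^s)/w_{\mathcal J}(f^s)\in\QQ_{>0}$.

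This is exactly the paper's identity $w_{\mathcal I}(f)=\xi\, w_{\mathcal J}(f)$. The paper reaches it by squeezing $\xi$ between rational approximations $p_0/q_0\le \xi\le p_0'/q_0'$; your floor-and-limit computation is a cleaner route to the same identity. The rationality comes from the rational coefficients $a_k$ through Lemma \ref{ratfun}, not from the numbers $v_k(\mathcal I)$.

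Finally, the ``main obstacle'' you flag disappears if you use one common list $v_1,\ldots,v_r$ for $\mathcal I$ and $\mathcal J$, with zero coefficients allowed, as the paper does. The proof of Lemma \ref{DivSat} and Theorem \ref{Mink} then give $J_n=\{g\mid v_k(g)\ge n\xi v_k(\mathcal I)\mbox{ for all }k\}$ directly. Your detour through $\widetilde{\mathcal I^{(\xi)}}$ also works, but is unnecessary: if $v_k(g)\ge t\,v_k(\mathcal I)$ for all $k$, then $g^q\in I_{\lfloor qt\rfloor}$, so $v(g)\ge \frac{\lfloor qt\rfloor}{q}v(\mathcal I)$ for every $v\in V(R)$.
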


\begin{proof} The multiplicities $e(\mathcal I)>0$ and $e(\mathcal J)>0$ since $\mathcal I$ and $\mathcal J$ are divisorial filtrations by Lemma \ref{Lemma21}. There exist $v_1,\ldots,v_r\in V(R)$ and $a_1,\ldots,a_r,b_1,\ldots,b_r\in \QQ_{\ge 0}$ such that
$\mathcal I=\{I_n\}$ where
$$
I_n=\{f\in R\mid v_i(f)\ge na_i\mbox{ for }1\le i\le r\}
$$
and $\mathcal J=\{J_n\}$ where 
$$
J_n=\{f\in R\mid f_i(f)\ge nb_i\mbox{ for }1\le i\le r\}.
$$
We have that 
$$
v_i(\mathcal I)e(\mathcal J)^{\frac{1}{d}}=v_i(\mathcal J)e(\mathcal I)^{\frac{1}{d}}\mbox{ for}1\le i\le r
$$
by Theorem \ref{Mink}, since the Minkowski equality holds. 

Let $g\in m_R\setminus Q_1\cup \cdots \cup Q_r$ where $v_i\in V(R/Q_i)$. By Lemma \ref{ratfun}, there exists $s\in \ZZ_{>0}$ such that 
$$
w_{\mathcal I}(g^{sl})=lw_{\mathcal I}(g^s)\mbox{ and } w_{\mathcal J}(g^{sl})=lw_{\mathcal J}(g^s)
$$
for all $l>0$. Let $g=g^s$. Let $m=w_{\mathcal J}(f)\in \ZZ_{>0}$ and $n=w_{\mathcal I}(f)\in \ZZ_{>0}$. Let $\delta\in \RR_{>0}$. By the remark on the bottom of page 156 \cite{HW} (or Lemma 4.1 \cite{C8}) there exists $\alpha\in \RR_{>0}$ such that $\alpha<\frac{\delta}{m}$ and there exist positive integers $p_0,q_0,p_0',q_0'$ such that
$$
\xi-\frac{\alpha}{q}<\frac{p_0}{q_0}\le \xi
$$
and
$$
\xi\le \frac{p_0'}{q_0'}<\xi+\frac{\alpha}{q_0'}.
$$
Let $p=p_0m$, $q=q_0m$. Then 
$$
v_i(\mathcal I)p\le \xi v_i(\mathcal I)q=v_i(\mathcal J)q
$$
for all $i$ so that $J_q\subset I_p$. $f^{q_0}\in J_q\subset I_p$ so that $w_{\mathcal I}(f^{q_0})\ge p$. Thus since $w_{\mathcal I}(f^{q_0})=q_0w_{\mathcal I}(f)$ by our choice of $f$,
\begin{equation}\label{Val3}
w_{\mathcal I}(f)\ge \frac{p_0}{q_0}m=\frac{p_0}{q_0}w_{\mathcal J}(f)>(\xi-\frac{\alpha}{q_0})w_{\mathcal J}(f).
\end{equation}
Let $p'=p_0'n$, $q'=q_0'n$. Then 
$$
v_i(\mathcal I)\xi q'\le v_i(\mathcal I)p'
$$
for all $i$ so that $I_{p'}\subset J_{q'}$. $f^{p_0'}\in I_{p'}\subset I_{q'}$. Thus since $w_{\mathcal J}(f^{p_0'})=p_0'w_{\mathcal J}(f)$ by our choice of $f$, 
$$
w_{\mathcal J}(f)\ge \frac{q_0'n}{p_0'}=\frac{q_0'}{p_0'}w_{\mathcal I}(f),
$$
so
\begin{equation}\label{Val4}
w_{\mathcal I}(f)\le \frac{p_0'}{q_0'}w_{\mathcal J}(f)<(\xi+\frac{\alpha}{q_0'})w_{\mathcal J}(f).
\end{equation}
Combining equations (\ref{Val3}) and (\ref{Val4}), we have that
$$
\begin{array}{lll}
w_{\mathcal I}(f)&\le & (\xi+\frac{\alpha}{q_0'})w_{\mathcal J}(f)\\
&=& (\xi-\frac{\alpha}{q_0})w_{\mathcal J}(f)+(\frac{\alpha}{q_0}+\frac{\alpha}{q_0'})w_{\mathcal J}(f)\\
&<&w_{\mathcal I}(f)+2\delta.
\end{array}
$$
All these inequalities approach equality when the limit is taken as $\delta$ goes to zero. Thus $w_{\mathcal I}(f)=\xi w_{\mathcal J}(f)$, and so
$$
\xi=\frac{w_{\mathcal I}(f)}{w_{\mathcal J}(f)}\in \QQ_{>0}.
$$
\end{proof}

\begin{Remark} The multiplicity of a $\QQ$-divisorial filtration can be an irrational number (Example 6 \cite{CS} and Examples in \cite{C7}).
\end{Remark}

The following theorem is Theorem \ref{BoundMink} of the introduction.

\begin{Theorem}\label{BoundMink*}(Proven in Theorem 1.4 \cite{C8} when $R$ is analytically irreducible  or an excellent local domain)
  Suppose that $R$ is a $d$-dimensional Noetherian  local ring of dimension $d\ge 2$ and $\mathcal I$ and $\mathcal J$ are $\QQ$-bounded $m_R$-filtrations.
Then the  Minkowski equality
\begin{equation}\label{Val11}
e(\mathcal I\mathcal J)^{\frac{1}{d}}=e(\mathcal I)^{\frac{1}{d}}+e(\mathcal J)^{\frac{1}{d}}
\end{equation}
holds if and only if there exist positive integers a,b such that there is equality of integral closures
$\overline{\mathcal I^{(a)}}=\overline{\mathcal J^{(b)}}$.
\end{Theorem}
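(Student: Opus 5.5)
The plan is to reduce both directions to Theorem \ref{Mink}, Corollary \ref{Minkgen} and Theorem \ref{ValThm}, applied to the integral closures $\overline{\mathcal I}$ and $\overline{\mathcal J}$. Since $\mathcal I,\mathcal J$ are $\QQ$-bounded, $\overline{\mathcal I}$ and $\overline{\mathcal J}$ are $\QQ$-divisorial. By Lemma \ref{DivSat} they coincide with their saturations, so $\overline{\mathcal I}=\tilde{\mathcal I}$ and $\overline{\mathcal J}=\tilde{\mathcal J}$; here I use $\mathcal I\subset\overline{\mathcal I}\subset\tilde{\mathcal I}$ together with the fact that saturation is idempotent and monotone, because $v(\overline{\mathcal I})=v(\mathcal I)$. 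By Theorem \ref{Theorem14} (or Corollary \ref{ReesInt}) we get $e(\mathcal I)=e(\overline{\mathcal I})$, and likewise for $\mathcal J$. By Lemma \ref{Lemma21}, both multiplicities are positive, since a nontrivial $\QQ$-divisorial filtration is contained in a twist of some $\mathcal I(v)$.

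For the direction ($\Leftarrow$), suppose $\overline{\mathcal I^{(a)}}=\overline{\mathcal J^{(b)}}$. By (\ref{eq89}) this says $(\overline{\mathcal I})^{(a)}=(\overline{\mathcal J})^{(b)}$. Taking $v(\cdot)$ and using (\ref{eq51}) gives $a\,v(\mathcal I)=b\,v(\mathcal J)$ for all $v\in V(R)$. Taking $e(\cdot)$ and using (\ref{eq50}) gives $a^de(\mathcal I)=b^de(\mathcal J)$. Together these yield $e(\mathcal J)^{1/d}v(\mathcal I)=e(\mathcal I)^{1/d}v(\mathcal J)$ for all $v$, and Theorem \ref{Mink} then gives the Minkowski equality (\ref{Val11}).

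For the direction ($\Rightarrow$), first note that $e(\mathcal I\mathcal J)$ depends only on the valuation data: $v(\mathcal I\mathcal J)=v(\mathcal I)+v(\mathcal J)=v(\overline{\mathcal I}\,\overline{\mathcal J})$, so Theorem \ref{Theorem14} applied to $\mathcal I\mathcal J\subset\overline{\mathcal I}\,\overline{\mathcal J}$ shows that the Minkowski equality also holds for the $\QQ$-divisorial filtrations $\overline{\mathcal I},\overline{\mathcal J}$. Theorem \ref{ValThm} then gives $\xi=e(\mathcal J)^{1/d}/e(\mathcal I)^{1/d}\in\QQ_{>0}$; write $\xi=a/b$ with $a,b$ positive integers. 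Theorem \ref{Mink} gives $v(\mathcal J)=\xi v(\mathcal I)$, that is, $b\,v(\mathcal J)=a\,v(\mathcal I)$ for all $v\in V(R)$. By (\ref{eq51}) this is $v(\mathcal J^{(b)})=v(\mathcal I^{(a)})$ for all $v$, hence $\widetilde{\mathcal I^{(a)}}=\widetilde{\mathcal J^{(b)}}$. By (\ref{eq88}) and (\ref{eq89}), combined with $\overline{\mathcal I}=\tilde{\mathcal I}$, we have $\overline{\mathcal I^{(a)}}=(\overline{\mathcal I})^{(a)}=(\tilde{\mathcal I})^{(a)}=\widetilde{\mathcal I^{(a)}}$, and similarly for $\mathcal J^{(b)}$. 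This gives the desired equality of integral closures.

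The main obstacle I expect is the bookkeeping in the identification $\overline{\mathcal I}=\tilde{\mathcal I}$ for $\QQ$-bounded filtrations and its compatibility with twists. Concretely, one must check that $v(\overline{\mathcal I})=v(\mathcal I)$ for $v\in V(R)$; this follows because $f^r\in\overline{I_{rn}}$ forces $v(f)\ge n\,v(\mathcal I)$. One must also check that the twist of a $\QQ$-divisorial filtration by an integer is again divisorial, so that Lemma \ref{DivSat} applies to it. If the inclusion $\overline{\mathcal I}\subset\tilde{\mathcal I}$ combined with Lemma \ref{DivSat} needs more care, I would instead argue directly from the defining valuations $v_1,\ldots,v_r$ of $\overline{\mathcal I}$, as in the proof of Lemma \ref{DivSat}.
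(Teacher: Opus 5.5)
Your proposal is correct and follows the paper's route. You pass to the $\QQ$-divisorial closures, which coincide with the saturations, then use Theorem \ref{ValThm} to get $\xi\in\QQ_{>0}$, Theorem \ref{Mink} to get $\widetilde{\mathcal I^{(a)}}=\widetilde{\mathcal J^{(b)}}$ and boundedness to return to integral closures, with Theorem \ref{Mink} again for the converse. You also supply two steps the paper only asserts: the transfer of the Minkowski equality to $\overline{\mathcal I},\overline{\mathcal J}$ via $v(\mathcal I\mathcal J)=v(\mathcal I)+v(\mathcal J)$ and Theorem \ref{Theorem14}, and the compatibility of closure and saturation with integer twists.
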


\begin{proof} Suppose that the Minkowski equality (\ref{Val11}) holds. Then
$$
e(\mathcal I\mathcal J)^{\frac{1}{d}}=e(\mathcal I)^{\frac{1}{d}}+e(\mathcal J)^{\frac{1}{d}}
$$
so the $\QQ$-divisorial $m_R$-filtrations $\tilde {\mathcal I}$ and $\tilde {\mathcal J}$ satisfy the Minkowski equality. Thus 
$$
\frac{e(\mathcal J)^{\frac{1}{d}}}{e(\mathcal I)^{\frac{1}{d}}}=\frac{e(\tilde{\mathcal J})^{\frac{1}{d}}}{e(\tilde{\mathcal I})^{\frac{1}{d}}}\in \QQ_{>0}
$$
by Theorem \ref{ValThm}. Write 
$$
\frac{e(\tilde{\mathcal J})^{\frac{1}{d}}}{e(\tilde{\mathcal I})^{\frac{1}{d}}}=\frac{a}{b}
$$
with $a,b\in \ZZ_{>0}$. Then $\widetilde{\mathcal I^{(a)}}=\widetilde{\mathcal J^{(b)}}$ by Theorem \ref{Mink}. Thus $\overline{\mathcal I^{(a)}}=\overline{\mathcal J^{(b)}}$ since $\mathcal I$ and $\mathcal J$ are bounded $m_R$-filtrations.

Now suppose that $\overline{\mathcal I^{(a)}}=\overline{\mathcal J^{(b)}}$ for some positive integers $a$ and $b$. Then $\widetilde{\mathcal I^{(a)}}=\widetilde{\mathcal J^{(b)}}$. Thus the Minkowski equality (\ref{Val11}) holds by Theorem \ref{Mink}.

\end{proof}

\end{document}